\numberwithin{equation}{section}
\newtheorem{thm}{Theorem}  
\newtheorem{prop}[thm]{Proposition} 
\newtheorem{lem}[thm]{Lemma} 
\newtheorem{cor}[thm]{Corollary} 
\newtheorem{fact}[thm]{Fact}
\theoremstyle{definition}   
\newtheorem{defn}[thm]{Definition} 
\newtheorem{question}[thm]{Question}
\newtheorem{problem}[thm]{Problem}
\newcommand{\al}{\alpha}
\newcommand{\om}{\omega}
\newcommand{\fin}{\mathrm{fin}}
\newcommand{\bo}{\mathbf{0}}
\newcommand{\bone}{\mathbf{1}}
\newcommand{\sse}{\subseteq}
\newcommand{\contains}{\supseteq}
\DeclareMathOperator{\cf}{cf}
\DeclareMathOperator{\ci}{ci}
\DeclareMathOperator{\Clop}{Clop}
\DeclareMathOperator{\ro}{r.o.}
\newcommand{\bv}{\bigvee}
\newcommand{\bw}{\bigwedge}
\newcommand{\rgl}{\rangle}
\newcommand{\lgl}{\langle}
\newcommand{\re}{\restriction}
\newcommand{\bB}{\mathbb{B}}
\newcommand{\lra}{\leftrightarrow}
\newcommand{\ra}{\rightarrow}
\newcommand{\n}{\noindent}
\newcommand{\abs}[1]{\lvert#1\rvert}
\newcommand{\treealg}{\operatorname{Treealg}}
\newcommand{\intalg}{\operatorname{Intalg}}
\newcommand{\cn}{\uparrow}
\newcommand{\bl}{\downarrow}
\newcommand{\ult}{\operatorname{Ult}}  
\newcommand{\imm}{\operatorname{imm}}
\newcommand{\topkappa}{([\kappa]^{<\omega},\subseteq)}   
\newcommand{\U}{\mathcal{U}}
\theoremstyle{remark} 
\newtheorem*{rem}{Remark} 
\newtheorem*{claim}{Claim}
\newtheorem{subclaim}{Subclaim}
\begin{document}   


\title[Spectra of Tukey types]{Spectra of Tukey types of ultrafilters on Boolean algebras}

\author[J. A. Brown]{Jennifer A. Brown}
\email{jennifer.brown@csuci.edu}
\address{Department of Mathematics\\  California  State University - Channel Islands\\ One University Drive \\ Camarillo, CA 93012 U.S.A.}

\author[N. Dobrinen]{Natasha
  Dobrinen}
\email{natasha.dobrinen@du.edu}
\urladdr{http://web.cs.du.edu/~ndobrine}
\address{Department of Mathematics\\
  University of Denver \\
   2280 S Vine St\\ Denver, CO \ 80208 U.S.A.}

\thanks{Dobrinen was supported by  National Science Foundation Grant DMS-1301665.}

\subjclass[2010]{Primary: 06E05, Secondary:
03G05,
06A07,
03E04}

\keywords{Boolean algebra, pseudo-tree algebra, Tukey, ultrafilter}

\begin{abstract}
Extending recent investigations on  the  structure  of  Tukey types of ultrafilters on $\mathcal{P}(\om)$ to Boolean algebras in general, we classify the  spectra of Tukey types of ultrafilters for several classes of Boolean algebras, including interval  algebras, tree algebras,  and  pseudo-tree algebras.
\end{abstract}

\maketitle


\section{Introduction}\label{sec.intro}

The structure of Tukey types of ultrafilters on $\om$ is the subject  of much recent and ongoing research. 
This line of research was  reignited, after Isbell's study in \cite{Isbell65}, by   Milovich in \cite{Milovich08}.
One of the interests in this research stems from the connection via Stone duality  between Tukey types of ultrafilters on $\om$ and cofinal types of neighborhood bases on the \v{C}ech-Stone compactification $\beta\om$ of the natural numbers and its \v{C}ech-Stone remainder $\beta\om\setminus\om$.
Recently, the structure of the Tukey types of ultrafilters on $\om$ has undergone an explosion of activity, starting with \cite{Dobrinen/Todorcevic11} of
Dobrinen and Todorcevic, 
and continuing in
work of Milovich in \cite{Milovich12},  Raghavan and Todorcevic in \cite{Raghavan/Todorcevic12},
further work of Dobrinen and Todorcevic in
\cite{Dobrinen/Todorcevic14}, \cite{Dobrinen/Todorcevic15},
Blass, Dobrinen, and Raghavan in \cite{Blass/Dobrinen/Raghavan13},
Dobrinen, Mijares, and Trujillo in \cite{Dobrinen/Mijares/Trujillo14}, and most recently Dobrinen in \cite{Dobrinen14}.
These lines of study have all been focused  on clarifying the structure of Tukey types of ultrafilters on $\om$.
In particular, the aforementioned works provide good understanding of the Tukey structure  for a large class of  ultrafilters possessing some means of diagonalization, in particular, p-points.
The reader is referred to the recent survey article \cite{Dobrinen13} for more background.

There remain important open problems regarding Tukey types of ultrafilters on $\om$.
One of the most prominent open problems is due to Isbell.
In \cite{Isbell65}, Isbell showed that there are ultrafilters on $\om$ which have the maximum Tukey type, namely $([\mathfrak{c}]^{<\om},\sse)$.
He then asked whether there are ultrafilters on $\om$ which do not have the maximum Tukey type.
By work in \cite{Dobrinen/Todorcevic11}, p-points are never Tukey maximum.  However, Shelah constructed   a model of ZFC in which there are no p-points (see \cite{ProperForcingBK}),
so what remains of Isbell's problem is the following.
\begin{problem}\label{problem.Isbell}
Is there a model of ZFC in which all nonprincipal ultrafilters have the maximum Tukey type?
\end{problem}

Milovich showed in \cite{Milovich08} that there is a nonprincipal ultrafilter $\mathcal{U}$ on $\omega$ such that $(\mathcal{U},\contains)$ is not Tukey maximum if and only if there is a nonprincipal  ultrafilter $\mathcal{V}$ on $\omega$ such that $(\mathcal{V},\contains^*)$ is not Tukey maximum.
Thus,  Isbell's problem for ultrafilters on the Boolean algebra $\mathcal{P}(\om)$ is equivalent to the  problem for ultrafilters on the Boolean algebra 
$\mathcal{P}(\om)/ \fin$.

In this paper,
we  expand the investigation of the Tukey structure of ultrafilters to the general class of all Boolean algebras.
The main motivation  is  the analogue of  Isbell's problem for Boolean algebras in general:
\begin{problem}\label{q.IsbellBA}
Characterize the class of Boolean algebras  for which all its nonprincipal ultrafilters have the maximum Tukey type.
Further, characterize the class of Boolean algebras which have a nonprincipal ultrafilter which is not of maximum Tukey type.
\end{problem}
By understanding the characteristics of  Boolean algebras for which we can prove in ZFC whether or not it has more than one Tukey type of nonprincipal ultrafilter, 
we hope to gain new insight and methods for solving Isbell's problem on $\mathcal{P}(\omega)$ and $\mathcal{P}(\om)/\fin$.

The second motivation for this line of work is to use the structure of Tukey types of ultrafilters as a means of classifying Boolean algebras.
Tukey theory has been successfully used to classify  partial orders.
This is especially important in  cases when isomorphism is too strong a notion to gain any meaningful information.
The seminal works of \cite{TodorcevicDirSets85}, \cite{Todorcevic96}, and \cite{Solecki/Todorcevic04} used Tukey reducibility to classify 
directed, undirected, and analytic partial orders, respectively.
In a similar vein, the structure of  the Tukey types of ultrafilters on  Boolean algebras can be used as a means for a classifying Boolean algebras.
Given a Boolean algebra $\bB$, it follows from  Stone duality that
the Tukey type of an ultrafilter $\mathcal{U}$ on $\bB$ is the same as the cofinal type of the neighborhood basis of $\mathcal{U}$ in the Stone space of $\bB$.
Thus, the study of Tukey types of ultrafilters on Boolean algebras is equivalent to the study of cofinal types of neighborhood bases of Boolean spaces, that is, zero-dimensional compact Hausdorff spaces.
Those  Boolean algebras with similar Tukey spectra have similar topological properties in their Stone spaces, as the collection of all Tukey types of ultrafilters on a Boolean algebra  is equal to  the  collection of all cofinal types of  neighborhood bases of points in the Stone space.

Before stating more precise questions and basic facts,
we review the relevant definitions.
A partial order $(P,\le)$ is {\em directed} if for any two $p,q\in P$, there is an $r\in P$ such that $p\le r$ and $q\le r$.
An ultrafilter on a Boolean algebra $\bB$ is  simply a maximal filter on $\bB$.
Letting $\le$ denote the natural partial ordering on a Boolean algebra $\bB$ (defined by $a\le b$ if and only if $a\wedge b=a$),
we point out that  for any ultrafilter $\mathcal{U}$ on a Boolean algebra $\bB$,
$(\mathcal{U},\ge)$ is a directed partial ordering.
Although Tukey reducibility can be defined for transitive relations in general, we shall assume here that all partial orders discussed are directed,  since this article concentrates on ultrafilters.

Given $(P,\le_P)$, 
a subset $X\sse P$ is {\em cofinal} in $P$ if for each $p\in P$, there is an $x\in X$ such that $p\le_P  x$.
A  map $f \colon (P,\le_P)\ra (Q,\le_Q)$ is {\em cofinal} if for each cofinal $X\sse P$, the image $f[X]$ is cofinal in $Q$.
A set $X\sse P$ is {\em unbounded} if there is no upper bound for $X$ in $P$; that is, there is no
$p\in P$ such that for each $x\in X$, $x\le_P p$.
A map $g \colon (P,\le_P)\ra(Q,\le_Q)$ is {\em unbounded} or {\em Tukey}  if for each unbounded $X\sse P$, the image $g[X]$ is unbounded in $Q$. 
Schmidt showed  in \cite{Schmidt55} that the existence of a cofinal map from $(P,\le_P)$ to $(Q,\le_Q)$  is equivalent to the existence of a Tukey map from $(Q,\le_Q)$ to $(P,\le_P)$.
We say that $(P,\le_P)$ is {\em Tukey reducible to} $(Q,\le_Q)$, and write   $(P,\le_P)\le_T (Q,\le_Q)$, if there is  a cofinal map from $(Q,\le_Q)$ to $(P,\le_P)$ (\cite{Tukey40}).
We say that 
$(P,\le_P)$ is {\em Tukey equivalent to} $(Q,\le_Q)$, and write $(P,\le_P)\equiv_T (Q,\le_Q)$, if and only if
$(P,\le_P)\le_T (Q,\le_Q)$ and $(Q,\le_Q)\le_T (P,\le_P)$.

It follows from work of Schmidt in \cite{Schmidt55} and is implicit in work of Tukey \cite{Tukey40} that, for each infinite cardinal $\kappa$, 
the directed partial ordering  $([\kappa]^{<\om},\sse)$  is the maximum among all Tukey types of directed partial orderings of cardinality $\kappa$.
That is, every directed partial ordering of cardinality $\kappa$ is Tukey reducible to  $([\kappa]^{<\om},\sse)$. The minimal Tukey type is 1 (the one-element partially ordered set). An ultrafilter partially ordered by $\geq$ has this minimal Tukey type if and only if it is principal. For directed partial orderings,  Tukey types and cofinal types coincide.
(See \cite{TodorcevicDirSets85} for more background.)

An ultrafilter $\mathcal{U}$ on a Boolean algebra $\bB$ has the {\em maximum Tukey type}
if  $(\mathcal{U},{\ge})\equiv_T([|\bB|]^{<\om},{\sse})$, which is the maximum possible cofinal type for a directed partial ordering of cardinality $|\bB|$.
Isbell's result in \cite{Isbell65} that there is an ultrafilter on $\om$ which has maximum Tukey type 
 led us to ask whether this is true for   Boolean algebras in general:
Given a Boolean algebra, is there an ultrafilter which has the maximum Tukey type?
We will show  that this is not the case: there are several classes of Boolean algebras which do not have any ultrafilters with the maximum Tukey type, the uncountable interval algebras being the simplest of these (see Theorem \ref{interval algebras}).
This along with Problem \ref{q.IsbellBA}   led to the following collection of questions which we investigate.
Our intuition was that  Boolean algebras with strong structure would yield simple Tukey structures.

\begin{question}\label{q.existsmax}
For which Boolean algebras  is there an ultrafilter which has the maximum Tukey type?
\end{question}

\begin{question}\label{q.onlymax}
Are there Boolean algebras on which all ultrafilters have the maximum Tukey type?
\end{question}

\begin{question}\label{q.existsnonmax}
Which Boolean algebras have ultrafilters with non-maximum Tukey type?
\end{question}

\begin{question}\label{q.onlynonmax}
Can we characterize those   Boolean algebras  all of whose ultrafilters have Tukey type strictly below the maximum?
\end{question}

We answer these questions for certain classes of Boolean algebras by obtaining the finer results of classifying their Tukey spectra.
By the {\em Tukey  spectrum} of a Boolean algebra $\bB$, denoted Ts$(\bB)$, we mean the  collection of all Tukey types of  ultrafilters on $\bB$ partially ordered by Tukey reduction.
We use the terminology {\em Tukey spectra} of a class $\mathcal{C}$ of Boolean algebras to mean the collection  $\{$Ts$(\bB):\bB\in\mathcal{C}\}$.
The following is the main focus of this paper.

\begin{problem}\label{q.Ts}
Given a Boolean algebra, find its Tukey spectrum.
\end{problem}

We solve  Problem \ref{q.Ts} for the classes of  free Boolean algebras,  superatomic Boolean algebras generated by almost disjoint families, interval  algebras, tree algebras,  and  pseudo-tree algebras.

Section \ref{sec.top} 
concentrates on classes of Boolean algebras which  have an ultrafilter with maximum Tukey type.
We present  conditions guaranteeing the existence of an ultrafilter with the maximum Tukey type, providing some answers to  Question \ref{q.existsmax}.
In particular, 
 Theorem \ref{thm.indepimpliestop} shows 
that every infinite Boolean algebra with  an independent family of maximum cardinality has an ultrafilter with maximum Tukey type.
In Fact \ref{prop.freealgchar},
we point out that the class of free Boolean algebras answers Question \ref{q.onlymax} positively.
It is then natural to ask whether the completion of a free Boolean algebra has Tukey spectrum consisting of only the maximum Tukey type.
In Theorem \ref{thm.cohennot}  we rule out certain non-maximum Tukey types:  no finite product of 
regular cardinals is in the Tukey spectrum of the completion of a free Boolean algebra on infinitely many generators.
In
Theorem \ref{thm.adfamily} we find the Tukey spectra of superatomic Boolean algebras generated by almost disjoint families.
In particular,  the minimum and maximum Tukey types are always attained.

In Section \ref{sec.IntAlg}, we  classify the Tukey spectra  of the families of interval algebras, tree algebras, and pseudo-tree algebras.
Theorem \ref{interval algebras}
classifies the Tukey spectrum of  any interval algebra as a collection of products of two cardinals which are determined by adjutting initial and coinitial chains in the linear order.
It follows that 
the Tukey spectrum of the interval algebra of any uncountable linear order  does not include the maximum Tukey type.
Theorem \ref{tree algebras} characterizes the Tukey types of ultrafilters in tree algebras as the product of the cofinality of an initial chain in the tree with the weak product of the cardinality of its set of immediate successors.
This immediately leads to Theorem \ref{c.spectrum of tree algebra} which classifies the Tukey spectra of tree algebras.
Lemma \ref{lem.lambdafan}
distills the essential structure of a set of approximate immediate successors of an initial chain in terms of a $\lambda$-fan (see Definition \ref{def.lambdafan}).
This is used to 
 classify  the Tukey spectra of pseudo-tree algebras
in Theorem \ref{p.when pseudo-tree algebras have top type ult} in terms of the product of an initial chain $C$ and the weak product of the cardinals determined by the $\lambda$-fan of approximate immediate successors above $C$.

In addition to finding the Tukey spectra for these classes of Boolean algebras,
we 
show that for certain prescribed sets of partial orders, a Boolean algebra can be constructed which contains those partial orders in its Tukey spectrum. 
This is done for interval algebras in 
Fact \ref{f.attainable types for interval algebras},
and for  pseudo-tree algebras in 
Proposition \ref{finite product types attained}, which also takes care of tree algebras.

We conclude the introduction with some notation and basic facts which will be useful throughout the paper.
A function $f \colon (P,\le_P)\ra (Q,\le_Q)$ is called {\em monotone} if whenever $p\le_P p'$ then $f(p)\le_Q f(p')$.

\begin{fact}\label{fact.mono}
\begin{enumerate}
\item
Monotone maps with cofinal images are cofinal maps.
\item
Isomorphic partial orders have the same Tukey type.
\end{enumerate}
\end{fact}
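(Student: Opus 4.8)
The plan is to verify both parts directly from the definitions of cofinal set and cofinal map, with no machinery needed beyond chasing witnesses.

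For part (1), suppose $f\colon (P,\le_P)\ra (Q,\le_Q)$ is monotone with $f[P]$ cofinal in $Q$, and let $X\sse P$ be an arbitrary cofinal subset; the goal is to show $f[X]$ is cofinal in $Q$. Fix $q\in Q$. First use cofinality of $f[P]$ to obtain $p\in P$ with $q\le_Q f(p)$; then use cofinality of $X$ in $P$ to obtain $x\in X$ with $p\le_P x$; finally apply monotonicity to get $f(p)\le_Q f(x)$, so that $q\le_Q f(x)$ with $f(x)\in f[X]$. This exhibits a member of $f[X]$ above $q$, proving $f[X]$ cofinal. This three-step chase is the whole argument, and there is no genuine obstacle.

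For part (2), let $\varphi\colon (P,\le_P)\ra (Q,\le_Q)$ be an order isomorphism. Then $\varphi$ and $\varphi^{-1}$ are each monotone (an isomorphism both preserves and reflects the order) and surjective, hence each has cofinal image; by part (1) both are therefore cofinal maps. Recalling that $(P,\le_P)\le_T(Q,\le_Q)$ means precisely that there is a cofinal map from $(Q,\le_Q)$ to $(P,\le_P)$, the map $\varphi^{-1}$ witnesses $(P,\le_P)\le_T(Q,\le_Q)$ and the map $\varphi$ witnesses $(Q,\le_Q)\le_T(P,\le_P)$; hence $(P,\le_P)\equiv_T(Q,\le_Q)$. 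The only point that requires any attention at all — and it is a mild bookkeeping matter rather than an obstacle — is to apply the isomorphism in the direction matching the convention adopted for $\le_T$.
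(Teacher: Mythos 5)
Your proof is correct: the witness-chasing argument for (1) and the reduction of (2) to (1) via the surjectivity and two-way monotonicity of an isomorphism are exactly the standard argument this Fact rests on (the paper states it without proof), and your care about which direction of the isomorphism witnesses which Tukey reduction is handled correctly.
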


For $\kappa, \mu$ cardinals, the partial order $\leq$ on $\kappa \times \mu$ is defined coordinate-wise:\ $(\alpha, \beta) \leq (\alpha', \beta')$ if and only if $\alpha \leq \alpha'$ and $\beta \leq \beta'$. 
More generally, we fix the following notation.
\begin{defn}\label{def.productofcards}
Given a collection of cardinals $\{\kappa_i:i\in I\}$ for some index set $I$,
  $\prod_{i\in I}\kappa_i$ denotes the collection of all functions $f \colon I\ra \bigcup\{\kappa_i:i\in I\}$ such that for each $i\in I$, $f(i)\in\kappa_i$.
The partial ordering $\le$ on  $\prod_{i\in I}\kappa_i$ is coordinate-wise:
 For $f,g\in  \prod_{i\in I}\kappa_i$, $f\le g$ if and only if for all $i\in I$, $f(i)\le g(i)$.
\end{defn}

\begin{fact}\label{facts.useful}
\begin{enumerate}
\item
For any infinite cardinal $\kappa$,
every directed partial ordering of cardinality less than or equal to $\kappa$ is Tukey reducible to 
$([\kappa]^{<\om},\sse)$.
\item
Let $\mathcal{U}$ be an ultrafilter on  a Boolean algebra $\bB$, and let $\kappa=|\bB|$.
Then $(\mathcal{U},\ge)\equiv_T([\kappa]^{<\om},\sse)$
if and only if
there is a subset $\mathcal{X}\sse\mathcal{U}$ of cardinality $\kappa$ such that for each infinite $\mathcal{Y}\sse\mathcal{X}$,
$\mathcal{Y}$ is unbounded in $\mathcal{U}$. 
\item
$(\om,\le)\equiv_T([\om]^{<\om},\sse)$.
\item
For any uncountable cardinal $\kappa$, $\topkappa >_T (\kappa, {\leq})$.
\item
For any infinite cardinal $\kappa$ and any $n<\omega$, $(\kappa, \leq) \equiv_T (\prod_{i<n}\kappa, \leq)$.
\item Let $\mathcal{U}$ be an ultrafilter on  a Boolean algebra $\bB$, and let $G \subseteq \bB$ be a filter base for $\U$. Then $(\U, \supseteq) \equiv_T (G, \supseteq)$.
\end{enumerate}
\end{fact}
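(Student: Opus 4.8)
Each of the six clauses is a direct unwinding of the definitions of cofinal and Tukey map together with Fact~\ref{fact.mono}(1); the only clause carrying real content is (2), and (4) will fall out of the same principle, so I organize the argument around that.

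First the routine clauses. For (1): fix an enumeration $P=\{p_\al:\al<\kappa\}$ (with repetitions if $|P|<\kappa$) and, using directedness, choose for each $F\in[\kappa]^{<\om}$ an upper bound $g(F)\in P$ of $\{p_\al:\al\in F\}$; then $g\colon([\kappa]^{<\om},\sse)\to(P,\le)$ is cofinal, since for cofinal $X\sse[\kappa]^{<\om}$ and $p=p_\beta\in P$ one has $g(F)\ge p_\beta$ for any $F\in X$ containing $\beta$, whence $(P,\le)\le_T([\kappa]^{<\om},\sse)$. For (3): $(\om,\le)\le_T([\om]^{<\om},\sse)$ is a case of (1), and $n\mapsto\{0,\dots,n\}$ is a monotone map $(\om,\le)\to([\om]^{<\om},\sse)$ with cofinal image, hence cofinal by Fact~\ref{fact.mono}(1), giving the reverse reduction. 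For (5) (with $n\ge1$): the diagonal $\al\mapsto(\al,\dots,\al)$ and the map $(\beta_i)_{i<n}\mapsto\max_{i<n}\beta_i$ are monotone with cofinal images between $(\kappa,\le)$ and $(\prod_{i<n}\kappa,\le)$ in the two directions, so Fact~\ref{fact.mono}(1) yields $\equiv_T$. For (6): since $G$ generates $\U$ we have $G\sse\U$ and every $u\in\U$ lies $\bB$-above some element of $G$, so the inclusion $\iota\colon(G,\supseteq)\to(\U,\supseteq)$ is monotone with cofinal image, hence a cofinal map, giving $(\U,\supseteq)\le_T(G,\supseteq)$; moreover $\iota$ is unbounded, for if $X\sse G$ has no lower bound in $G$ while some $v\in\U$ were a $\bB$-lower bound of $X$, then choosing $g\in G$ with $g\le_\bB v$ makes $g$ a bound for $X$ in $G$, a contradiction. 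So by Schmidt's theorem $(G,\supseteq)\le_T(\U,\supseteq)$ as well, and $(\U,\supseteq)\equiv_T(G,\supseteq)$.

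For (2): since $(\U,\ge)$ is directed of cardinality at most $|\bB|=\kappa$, clause (1) gives $(\U,\ge)\le_T([\kappa]^{<\om},\sse)$ for free, so the claim reduces to showing $([\kappa]^{<\om},\sse)\le_T(\U,\ge)$ if and only if the stated $\mathcal{X}$ exists. This is the general fact that for any directed poset $(D,\le_D)$ one has $([\kappa]^{<\om},\sse)\le_T(D,\le_D)$ iff $D$ has a subset of size $\kappa$ all of whose infinite subsets are unbounded. For the ``if'' direction, given such $\mathcal{X}=\{x_\al:\al<\kappa\}$, the map $f(d)=\{\al<\kappa:x_\al\le_D d\}$ has finite values (an infinite one would make $d$ bound an infinite subset of $\mathcal{X}$), is monotone, and has cofinal image (given a finite $F$, use directedness to bound $\{x_\al:\al\in F\}$ by some $d$, so $F\sse f(d)$), hence is a cofinal map $D\to[\kappa]^{<\om}$ by Fact~\ref{fact.mono}(1), so $[\kappa]^{<\om}\le_T D$. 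For the ``only if'' direction, fix a Tukey map $g\colon([\kappa]^{<\om},\sse)\to(D,\le_D)$ and put $\mathcal{X}=\{g(\{\al\}):\al<\kappa\}$; for infinite $A\sse\kappa$ the family $\{\{\al\}:\al\in A\}$ is unbounded in $[\kappa]^{<\om}$, hence $\{g(\{\al\}):\al\in A\}$ is unbounded in $D$, which gives that every infinite subset of $\mathcal{X}$ is unbounded. The delicate point is that $\mathcal{X}$ must have cardinality exactly $\kappa$; for $D=(\U,\ge)$ one secures this using $|\U|=|\bB|=\kappa$ (since $a\mapsto\neg a$ bijects $\U$ with $\bB\setminus\U$) together with a thinning of $g$. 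Granting this general fact, (4) also follows: $(\kappa,\le)\le_T([\kappa]^{<\om},\sse)$ is a case of (1), while if $([\kappa]^{<\om},\sse)\le_T(\kappa,\le)$ held we would obtain $X\sse\kappa$ of size $\kappa$ every infinite subset of which is cofinal in $\kappa$; but $X$ has order type $\kappa>\om$, so its first $\om$ elements are bounded by its $\om$-th element, which is $<\kappa$ --- a contradiction. Hence $([\kappa]^{<\om},\sse)>_T(\kappa,\le)$ for uncountable $\kappa$.

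I expect the sole genuine obstacle to be that cardinality bookkeeping in the ``only if'' half of the general fact --- arranging that the extracted witness $\mathcal{X}$ really has size $\kappa$ rather than just the size of the range of $g$ on singletons; everything else is mechanical. (For (4) one may also sidestep the singular-cardinal subtlety: $\operatorname{cof}([\kappa]^{<\om})=\kappa$ would force $\kappa$ regular, and then $(\kappa,\le)$ is $\sigma$-directed while $([\kappa]^{<\om},\sse)$ is not, and $\sigma$-directedness is inherited downward along $\le_T$.)
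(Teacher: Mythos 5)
Your proposal is correct in substance and is essentially a self-contained expansion of what the paper disposes of by citation: the paper proves (1) by citing Schmidt, (3) by citing Day, (2) by pointing to the proof of Fact 12 of Dobrinen--Todorcevic, and (6) via the cofinal-subset fact from that same paper, while giving only one-line arguments for (4) and (5). Your direct arguments for (1), (3), (5), (6) match those standard proofs. For (2) you isolate a clean general principle — $([\kappa]^{<\om},\sse)\le_T(D,\le_D)$ for directed $D$ iff $D$ has a $\kappa$-sized subset all of whose infinite subsets are unbounded — and then derive (4) from it; this is a genuinely nicer route than the paper's, whose stated justification for (4) ("every countably infinite subset of an uncountable $\kappa$ is bounded in $\kappa$") is literally false when $\cf\kappa=\om$, whereas your order-type argument (and your alternative via $\sigma$-directedness plus the cofinality comparison forcing $\kappa$ regular) covers all uncountable $\kappa$.

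The one loose end you flag — getting $|\mathcal{X}|=\kappa$ in the "only if" half of the general principle — is not actually an obstacle, and your proposed fix via $|\U|=|\bB|=\kappa$ and a "thinning of $g$" is the wrong tool. The point is simply that the Tukey property of $g$ forces $\al\mapsto g(\{\al\})$ to be finite-to-one: if infinitely many $\al$ had the same image $u\in D$, then the unbounded family $\{\{\al\}:\al\in A\}$ (for $A$ that infinite fiber) would map to the singleton $\{u\}$, which is bounded by $u$, contradicting that $g$ is unbounded. Hence the range $\mathcal{X}=\{g(\{\al\}):\al<\kappa\}$ automatically has cardinality $\kappa$, for any directed $D$ — no appeal to $|\U|=\kappa$ is needed, and the same remark makes your application to $D=(\kappa,\le)$ in (4) legitimate (where the full size-$\kappa$ conclusion is genuinely needed, e.g.\ for $\kappa=\aleph_\om$, since a countable unbounded $\mathcal{X}$ would not yield a contradiction). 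With that one-line insertion your proof is complete. Minor side remark: in (5) you correctly restrict to $n\ge 1$; the paper's statement as written would fail for $n=0$.
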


\begin{proof}
(1) follows from work of Schmidt in \cite{Schmidt55}.
The proof of (2) is very similar to the proof of Fact 12 in \cite{Dobrinen/Todorcevic11}.
(3) is due to 
Day in \cite{Day44}.
(4) follows from (1) along with the fact that every countably infinite subset of an uncountable cardinal $\kappa$ is bounded in $\kappa$.
To show (5),
let $\kappa \geq \omega$ and $n<\omega$. Define $f \colon (\kappa, \leq) \to (\prod_{i<n}\kappa, \leq)$ by $f(\alpha)=\langle \alpha, \dots, \alpha \rangle$ (the element of $(\prod_{i<n}\kappa, \leq)$ constant at $\alpha$). Then $f$ is an unbounded cofinal map. For (6), note that a filter base $G$ for $\U$ is a cofinal subset of $\U$. Then (6) follows by Fact 3 in \cite{Dobrinen/Todorcevic11}.
\end{proof}

\section{Boolean algebras attaining the maximum Tukey type}\label{sec.top}

It is well-known that for each infinite  cardinal $\kappa$,
there is an ultrafilter on $\mathcal{P}(\kappa)$ with maximum Tukey type. 
(This follows from combining work of Isbell in \cite{Isbell65} and Schmidt in \cite{Schmidt55}.)
Such an ultrafilter may be constructed using an independent family on $\kappa$ of cardinality $2^{\kappa}$.
We begin by showing that this construction generalizes to any infinite Boolean algebra $\bB$ with an independent family of cardinality $|\bB|$.

\begin{thm}\label{thm.indepimpliestop}
If $\bB$ is an infinite  Boolean algebra with an independent family of cardinality $|\bB|$,
then there is an ultrafilter $\mathcal{U}$ on $\bB$ such that $(\mathcal{U},\ge)\equiv_T([|\bB|]^{<\om},\sse)$.
\end{thm}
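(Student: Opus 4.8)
The plan is to mimic the classical construction of a Tukey-top ultrafilter on $\mathcal{P}(\kappa)$, but phrased purely in terms of an independent family inside $\bB$, and then verify the criterion in Fact \ref{facts.useful}(2). Write $\kappa=|\bB|$ and let $\{b_\xi:\xi<\kappa\}$ be an independent family in $\bB$; independence means that every finite ``Boolean combination'' $\bw_{\xi\in F}b_\xi\wedge\bw_{\xi\in G}(-b_\xi)$ is nonzero whenever $F,G$ are disjoint finite subsets of $\kappa$. The set of all such finite Boolean combinations generates a filter base $D$ on $\bB$ (the filter generated by $\{b_\xi:\xi<\kappa\}$), and since $D$ has the finite intersection property there is an ultrafilter $\mathcal{U}\supseteq D$; I would actually want a little more control, so I would instead first enlarge $\{b_\xi\}$ to a maximal independent family and then take $\mathcal{U}$ to contain $\{b_\xi:\xi<\kappa\}$, but the key point is simply: fix any ultrafilter $\mathcal{U}$ on $\bB$ with $b_\xi\in\mathcal{U}$ for all $\xi<\kappa$.

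Next I would exhibit the witnessing subset $\mathcal{X}\sse\mathcal{U}$ of size $\kappa$ required by Fact \ref{facts.useful}(2). The natural candidate is $\mathcal{X}=\{b_\xi:\xi<\kappa\}$ itself (it has cardinality $\kappa$ because an independent family of that size consists of distinct elements). I then must show: for every infinite $\mathcal{Y}\sse\mathcal{X}$, $\mathcal{Y}$ is unbounded in $(\mathcal{U},\ge)$, i.e.\ there is no $c\in\mathcal{U}$ with $c\le b$ for all $b\in\mathcal{Y}$. Suppose $\mathcal{Y}=\{b_\xi:\xi\in A\}$ for some infinite $A\sse\kappa$ and suppose toward a contradiction that some $c\in\mathcal{U}$ satisfies $c\le b_\xi$ for all $\xi\in A$. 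Pick any $\eta\in A$; since $A$ is infinite I may pick $\eta$ distinct from finitely many chosen indices as needed. The standard trick: I claim $c\wedge(-b_\eta)$ would have to be nonzero and lie below all the other $b_\xi$, which is impossible — more precisely, the right way to get the contradiction is to note that $c\le\bw_{\xi\in F}b_\xi$ for every finite $F\sse A$, so $c$ is a lower bound for arbitrarily large finite meets from an independent family; but an independent family is ``almost disjoint enough'' that no nonzero element can sit below infinitely many of its members while still being in the ultrafilter, because for $\eta\notin F$ the element $\bw_{\xi\in F}b_\xi\wedge(-b_\eta)$ is nonzero by independence, and one uses this to split $c$. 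Concretely: since $c\in\mathcal{U}$ and $b_\eta\in\mathcal{U}$ we have $c\wedge b_\eta\in\mathcal{U}$, hence $c\wedge b_\eta\ne 0$; the point is to instead derive that $c$ cannot be below $b_\zeta$ for some $\zeta\in A$. I would make this precise by the following cleaner argument: the filter generated by $\mathcal{U}$ together with the demand ``below all $b_\xi$, $\xi\in A$'' cannot be a proper filter extending $\mathcal{U}$ unless $\bw_{\xi\in A}b_\xi$ (computed in a completion) is realized, which it is not, because independence forces, for any would-be lower bound $c\ne 0$ and any $\eta\in A$ not ``used'' by $c$, that $c\wedge(-b_\eta)\ne 0$, contradicting $c\le b_\eta$. (The element $c$ lies in $\bB$, so it lies below only a ``finitely generated'' part of the family; choosing $\eta\in A$ outside that finite part gives $c\not\le b_\eta$.)

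So the heart of the matter, and the step I expect to be the main obstacle to writing carefully, is this last claim: \emph{any nonzero $c\in\bB$ fails to be a lower bound of $\{b_\xi:\xi\in A\}$ for infinite $A$}, equivalently there is $\eta\in A$ with $c\not\le b_\eta$, i.e.\ $c\wedge(-b_\eta)\ne 0$. The reason is that if $c\le b_\eta$ for all but finitely many $\eta\in A$, then in particular $c\le b_\eta$ for two indices $\eta_0\ne\eta_1$ in $A$ with (after removing the finite exceptional set) $c\le b_{\eta_1}$; but one shows by an independence/ counting argument that a single element $c$ can be forced below at most finitely many independent generators — indeed the subalgebra generated by the $b_\xi$ is free, so $c$, being in a subalgebra generated by finitely many $b_\xi$'s only if $\bB$ happened to be that subalgebra, in general is below $b_\eta$ for only finitely many $\eta$; the fully general statement is that no nonzero element of a Boolean algebra lies below infinitely many members of an independent family, which follows from the defining nonvanishing of $\bw_{\xi\in F}b_\xi\wedge(-b_\eta)$. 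Once this claim is in hand, the theorem is immediate: $\mathcal{X}=\{b_\xi:\xi<\kappa\}\sse\mathcal{U}$ has size $\kappa=|\bB|$ and every infinite $\mathcal{Y}\sse\mathcal{X}$ is unbounded in $(\mathcal{U},\ge)$, so Fact \ref{facts.useful}(2) gives $(\mathcal{U},\ge)\equiv_T([|\bB|]^{<\om},\sse)$.
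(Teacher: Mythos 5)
There is a genuine gap, and it sits exactly where you say you expect trouble. Your argument rests on the claim that \emph{no nonzero element of a Boolean algebra lies below infinitely many members of an independent family}, and that this ``follows from the defining nonvanishing of $\bw_{\xi\in F}b_\xi\wedge(-b_\eta)$.'' It does not follow, and the claim is false in general: independence only constrains finite Boolean combinations of the family itself, and says nothing about an arbitrary $c\in\bB$ lying below infinitely many $b_\xi$. Concretely, take $\bB=\mathcal{P}(\om)$ with the standard independent family of size $\mathfrak{c}$: identify $\om$ with the set of pairs $(F,\mathcal{F})$ where $F\in[\om]^{<\om}$ and $\mathcal{F}\sse\mathcal{P}(F)$, and put $A_X=\{(F,\mathcal{F}): X\cap F\in\mathcal{F}\}$ for $X\sse\om$. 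The point $p=(F,\mathcal{P}(F))$ lies in \emph{every} $A_X$, so the singleton $\{p\}$ is a nonzero lower bound of the whole family, and the principal ultrafilter at $p$ contains every $A_X$. Thus ``fix any ultrafilter $\mathcal{U}$ containing all the $b_\xi$'' does not suffice: with that choice your witnessing set $\mathcal{X}=\{b_\xi:\xi<\kappa\}$ can be bounded in $(\mathcal{U},\ge)$, and Fact \ref{facts.useful}(2) cannot be applied. The auxiliary remark that the subalgebra generated by the $b_\xi$ is free is beside the point, since the would-be bound $c$ ranges over all of $\bB$, not over that subalgebra; and enlarging to a maximal independent family does not repair this either.

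The missing idea, which is the heart of the paper's proof, is to build the filter so that such bounds are excluded from the start: besides the independent family $\mathcal{I}=\{a_\al:\al<\kappa\}$ one also throws in $\mathcal{J}=\{-b: b\in\bB$ and $b\le a_\al$ for infinitely many $\al\}$, checks (using independence) that $\mathcal{I}\cup\mathcal{J}$ has the finite intersection property, and lets $\mathcal{F}$ be the generated proper filter. Then for any infinite $A\sse\kappa$, a nonzero $c$ with $c\le a_\al$ for all $\al\in A$ has $-c\in\mathcal{J}\sse\mathcal{F}$, so $c\notin\mathcal{F}$; hence $F\mapsto\bw_{\al\in F}a_\al$ is a Tukey map from $[\kappa]^{<\om}$ into $\mathcal{F}$, and any ultrafilter extending $\mathcal{F}$ (rather than an arbitrary one containing $\mathcal{I}$) has maximum Tukey type. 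Your outline can be repaired by inserting exactly this step; as written, the unboundedness verification fails.
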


\begin{proof}
Let $\bB$ be any infinite  Boolean algebra
 with an
 independent family $\mathcal{I}=\{a_{\al}:\al<|\bB|\}$, and let $\kappa$ denote $|\bB|$.
Define $\mathcal{J}=\{-b:b\in \bB$ and $\{\al<\kappa: b\le a_{\al}\}$ is infinite$\}$.
We  first show that $\mathcal{I}\cup\mathcal{J}$ has the finite intersection property.
Let $m,n<\om$, $\al_i$ $(i\le m)$ be distinct members of $\kappa$, and let
$b_j$ $(j\le n)$ be distinct members of $\mathcal{J}$.
Since each $b_j$ is below $a_{\al}$ for infinitely many $\al<\kappa$,
we may choose distinct $\beta_j$ ($j\le n$) such that for each $j\le n$,  $b_j\le a_{\beta_j}$,
and moreover $\beta_j\not\in \{\al_i:i\le m\}$.
Since $\mathcal{I}$ is independent it follows that $\bw_{i\le m} a_{\al_i}\wedge\bw_{j\le n}(-a_{\beta_j})>\bo$.
Since each $b_j\ge -a_{\beta_j}$,
we have that $\bw_{i\le m} a_{\al_i}\wedge\bw_{j\le n}b_j>\bo$.

Let $\mathcal{F}$ be the filter generated by $\mathcal{I}\cup\mathcal{J}$.
$\mathcal{F}$ is a proper filter, since its generating set has the finite intersection property.

\begin{claim} $(\mathcal{F},\ge)\equiv_T([\kappa]^{<\om},\sse)$.
\end{claim}

\begin{proof}
It  suffices to show that
 $(\mathcal{F},\ge)\ge_T([\kappa]^{<\om},\sse)$,
 since  $\mathcal{F}$ being a directed partial order of size $\kappa$ implies that $(\mathcal{F},\ge)\le_T([\kappa]^{<\om},\sse)$.
Define $f \colon [\kappa]^{<\om}\ra\mathcal{F}$ by $f(F)=\bw_{\al\in F}a_{\al}$, for each $F\in[\kappa]^{<\om}$.
We claim that $f$ is a Tukey map.
To see this, let $\mathcal{X}\sse[\kappa]^{<\om}$ be unbounded in $([\kappa]^{<\om},\sse)$.
Then $\mathcal{X}$ must be infinite.
The $f$-image of $\mathcal{X}$ is $\{\bw_{\al\in F}a_{\al}:F\in\mathcal{X}\}$.
Any lower bound $b$ of $\{\bw_{\al\in F}a_{\al}:F\in\mathcal{X}\}$ would have to have the property that
 $b\le\bw_{\al\in\bigcup\mathcal{X}}a_{\al}$.
But $\bv_{\al\in\bigcup\mathcal{X}}-a_{\al}$ is in $\mathcal{B}$, since $\bigcup\mathcal{X}$ is  infinite; so  the complement of $\bw_{\al\in\bigcup\mathcal{X}}a_{\al}$ is in $\mathcal{F}$.
Since $\mathcal{F}$ is a proper filter, 
$\mathcal{F}$ contains no members below
$\bw_{\al\in\bigcup\mathcal{X}}a_{\al}$.
It follows that 
 the $f$ image of $\mathcal{X}$ is unbounded in $\mathcal{F}$.
Therefore $f$ is a Tukey map and the claim holds.
\end{proof}

Let $\mathcal{U}$ be any 
 ultrafilter on $\bB$ extending $\mathcal{F}$.
Then  $(\mathcal{F},\ge)\le_T(\mathcal{U},\ge)$, since the identity map on $(\mathcal{F},\ge)$ is a Tukey map.
Hence, $(\mathcal{U},\ge)\equiv_T([\kappa]^{<\om},\sse)$.
\end{proof}

The next theorem follows immediately by an application of  the Balcar-Fran\v{e}k Theorem, which states that every infinite complete Boolean algebra has an independent subset of cardinality that of the algebra (see Theorem 13.6 in \cite{KoppelbergHB}).

\begin{thm}\label{cor.complete.top}
Every infinite complete Boolean algebra has an ultrafilter with maximum Tukey type.
\end{thm}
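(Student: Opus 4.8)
The plan is to derive Theorem~\ref{cor.complete.top} directly from Theorem~\ref{thm.indepimpliestop} via the Balcar--Fran\v{e}k theorem. Let $\bB$ be an infinite complete Boolean algebra and set $\kappa = |\bB|$. The Balcar--Fran\v{e}k theorem (Theorem~13.6 in \cite{KoppelbergHB}) asserts that $\bB$ contains an independent family of cardinality $\kappa = |\bB|$. The hypothesis of Theorem~\ref{thm.indepimpliestop} is then met verbatim, so there is an ultrafilter $\mathcal{U}$ on $\bB$ with $(\mathcal{U},\ge)\equiv_T([\kappa]^{<\om},\sse)$, i.e.\ $\mathcal{U}$ has maximum Tukey type.

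There is essentially nothing to do beyond quoting the two results in sequence; the only point worth a line of care is that Balcar--Fran\v{e}k gives an independent family of size exactly $|\bB|$ (not merely of size $2^{\aleph_0}$ or some smaller cardinal), which is precisely the cardinality needed to invoke Theorem~\ref{thm.indepimpliestop}. One should also note that ``maximum Tukey type'' here means $(\mathcal{U},\ge)\equiv_T([\,|\bB|\,]^{<\om},\sse)$, matching the conclusion of Theorem~\ref{thm.indepimpliestop} exactly, so no further reduction or cardinal arithmetic is required.

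Since the argument is a one-step composition, there is no real obstacle; the substantive content is already packaged in Theorem~\ref{thm.indepimpliestop} and in the cited Balcar--Fran\v{e}k theorem. If one wanted to be self-contained one would have to reprove Balcar--Fran\v{e}k, but the excerpt explicitly permits citing it, so the proof reduces to a single sentence or two.

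\begin{proof}
Let $\bB$ be an infinite complete Boolean algebra. By the Balcar--Fran\v{e}k Theorem (Theorem 13.6 in \cite{KoppelbergHB}), $\bB$ has an independent subset of cardinality $|\bB|$. Hence, by Theorem~\ref{thm.indepimpliestop}, there is an ultrafilter $\mathcal{U}$ on $\bB$ such that $(\mathcal{U},\ge)\equiv_T([|\bB|]^{<\om},\sse)$; that is, $\mathcal{U}$ has maximum Tukey type.
\end{proof}
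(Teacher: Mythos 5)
Your proof is correct and is essentially identical to the paper's own argument: both apply the Balcar--Fran\v{e}k Theorem to obtain an independent family of cardinality $|\bB|$ and then invoke Theorem~\ref{thm.indepimpliestop}. Nothing further is needed.
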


\begin{proof}
By the Balcar-Fran\v{e}k Theorem, 
every infinite  complete Boolean algebra $\bB$ has an independent family of cardinality $|\bB|$.
By Theorem \ref{thm.indepimpliestop}, there is an ultrafilter $\mathcal{U}$ on $\bB$ such that 
$(\mathcal{U},\ge)\equiv_T([|\bB|]^{<\om},\sse)$.
\end{proof}

We mention  the following theorem of Shelah  giving sufficient conditions for a Boolean algebra to have an independent family of maximal size.

\begin{thm}[Shelah, (Theorem 10.1 in \cite{KoppelbergHB})]\label{thm.Shelah10.1}
Assume $\kappa,\lambda$ are regular infinite cardinals such that $\mu^{<\kappa}<\lambda$ for every cardinal $\mu<\lambda$, and that $\bB$ is a Boolean algebra satisfying the $\kappa$-chain condition.
Then every $X\sse\bB$ of size $\lambda$ has an independent subset of $Y$ of size $\lambda$.
\end{thm}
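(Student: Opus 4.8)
The plan is a transfinite recursion of length $\lambda$ that builds an independent sequence $\langle b_\eta:\eta<\lambda\rangle$ inside $X$. First I would replace $\bB$ by the subalgebra generated by $X$; this is again $\kappa$-cc and has cardinality $\lambda$, so we may assume $\abs{\bB}=\lambda$. At stage $\eta$, having produced an independent set $Y_\eta=\{b_\zeta:\zeta<\eta\}$ and writing $A_\eta$ for the subalgebra it generates, I would pick $b_\eta\in X$ so that $b_\eta$ \emph{splits every nonzero element of $A_\eta$}, i.e.\ $b_\eta\wedge a\ne\bo\ne(-b_\eta)\wedge a$ for all $a\in A_\eta\setminus\{\bo\}$. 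This single requirement propagates independence: a purported nontrivial Boolean relation among finitely many members of $Y_\eta\cup\{b_\eta\}$ that involves $b_\eta$ says exactly that some nonzero element of $A_\eta$ is not split by $b_\eta$, so if no such element exists then $Y_\eta\cup\{b_\eta\}$ is again independent. Carrying this out for $\lambda$ steps and taking $Y=\{b_\eta:\eta<\lambda\}$ then yields an independent subset of $X$ of size $\lambda$, since any finite subset of $Y$ with a prescribed sign pattern has nonzero meet, being split at the stage of its largest index.

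So the whole theorem reduces to showing that the recursion never stalls, i.e.\ that at every stage $\eta<\lambda$ the set $G_\eta:=\{b\in X: b\text{ splits every nonzero element of }A_\eta\}$ of admissible next elements is large; since fewer than $\lambda$ elements of $X$ have been used before stage $\eta$, it suffices to know $\abs{G_\eta}=\lambda$. This is the heart of the matter and the step I expect to be hardest. A crude cardinality count will not do: a Cohen algebra contains a dense free subalgebra of smaller cardinality, over which no element whatsoever is generic, so one must also take care to choose the $b_\eta$ so that the approximating subalgebra $A_\eta$ is never dense in any relative algebra $\bB\re a$ ($a\in A_\eta$, $a\ne\bo$). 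The leverage should come from using both hypotheses together: the $\kappa$-chain condition restricts how an element of $X$ can sit over the small algebra $A_\eta$ --- the relevant ``decision'' is made by an antichain of $\bB$ of size $<\kappa$ --- while $\mu^{<\kappa}<\lambda$, applied with $\mu=\abs{A_\eta}<\lambda$, keeps the number of such possibilities below $\lambda$. Concretely, I would carry out the counting inside an elementary submodel $M\prec H(\theta)$ with $\bB,X,A_\eta\in M$ and $\abs{M}<\lambda$, chosen closed under $<\kappa$-sequences (possible because of the hypothesis on $\kappa$ and $\lambda$), so that the small antichains in question lie inside $M$ and the family of elements of $X$ that are \emph{not} generic over $A_\eta$ is confined enough to leave $\abs{G_\eta}=\lambda$.

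Granting the size estimate $\abs{G_\eta}=\lambda$ at every stage, the rest is a routine recursion: at stage $\eta$ choose $b_\eta\in G_\eta$ distinct from all earlier $b_\zeta$ and keeping $A_{\eta+1}$ non-dense, take unions at limit stages, and read off the independent family $Y=\{b_\eta:\eta<\lambda\}\sse X$ at the end. I expect all the genuine difficulty to be concentrated in establishing $\abs{G_\eta}=\lambda$ --- that a $\kappa$-cc Boolean algebra of cardinality $\lambda$ has, over each sufficiently generic subalgebra of cardinality $<\lambda$, a $\lambda$-sized supply of elements splitting it --- together with the bookkeeping that keeps the subalgebras $A_\eta$ from becoming dense.
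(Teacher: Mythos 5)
First, a point of comparison: the paper does not prove this statement at all --- it is quoted as Shelah's theorem from Koppelberg's Handbook (Theorem 10.1) and used as a black box --- so there is no internal proof to match your proposal against; it has to stand on its own, and it does not. You have correctly reduced everything to the claim that at each stage $\eta<\lambda$ the set $G_\eta$ of members of $X$ splitting every nonzero element of $A_\eta$ has size $\lambda$ (the propagation of independence and the limit stages are fine), but that claim is the entire theorem and, as stated, it is false --- and your proposed safeguard (never letting $A_\eta$ be dense below an element) does not repair it. Concretely, let $\lambda=\mathfrak{c}^+$ and let $\bB$ be the Cohen algebra on $\lambda$ coordinates: $\bB$ is ccc, $|\bB|=\lambda$, $\lambda$ is regular, and $\mu^{\om}<\lambda$ for all $\mu<\lambda$, so the hypotheses hold with $\kappa=\om_1$. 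Let $e_n$ ($n<\om$) and $g_\alpha$ ($\alpha<\lambda$) be distinct coordinate generators and put $X=\{e_n:n<\om\}\cup\{e_0\wedge g_\alpha:\alpha<\lambda\}$. Each $e_n$ splits every nonzero element of the subalgebra generated by $e_0,\dots,e_{n-1}$, so your recursion may legitimately choose $b_n=e_n$ for all $n<\om$; moreover each $A_n$ and $A_\om$ is a free subalgebra that is dense below no element of $\bB$, so your bookkeeping condition is satisfied along the way. Yet at stage $\om$ every unused member of $X$ is of the form $e_0\wedge g_\alpha$ and fails to split $-e_0\in A_\om$, so $G_\om$ contains no unused element and the recursion stalls --- even though $X$ does contain the $\lambda$-sized independent family $\{e_0\wedge g_\alpha:\alpha<\lambda\}$ promised by the theorem. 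The moral is that ``generic over the subalgebra generated so far, plus non-density'' is not a sufficient invariant: the admissible choices must be made with reference to $X$ itself, and nothing in your outline does that.

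Your heuristic use of the hypotheses also does not close this hole. The $\kappa$-cc together with $\mu^{<\kappa}<\lambda$ does rule out $A_\eta$ being dense (a dense subalgebra of size $\mu$ in a $\kappa$-cc algebra forces $|\bB|\le\mu^{<\kappa}<\lambda$, while $|\bB|\ge|X|=\lambda$), but failure of genericity needs only a single nonzero $a\in A_\eta$ lying below $x$ or below $-x$, and the example shows that all but countably many members of $X$ can be trapped this way over a legally constructed $A_\om$, with no antichain of size $\ge\kappa$ and no elementary-submodel closure anywhere in sight to prevent it. The known proof uses the two hypotheses in an essentially global way --- roughly, one codes how members of $X$ sit over a filtration of the algebra by fewer than $\kappa$ elements (this is where the $\kappa$-chain condition enters) and then uses $\mu^{<\kappa}<\lambda$ with $\lambda$ regular in a $\Delta$-system--style counting to thin $X$ to $\lambda$-many elements in simultaneous general position --- rather than choosing one generic element at a time. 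If you want to keep your recursive format, the lemma you must actually prove is of that thinning type: after a preliminary $\lambda$-sized refinement of $X$, every subalgebra generated by fewer than $\lambda$ of its members leaves $\lambda$-many members generic; proving that is where all the work of Shelah's theorem lives, and it is exactly the part your proposal leaves open.
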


Thus, if $\bB$ has the $\kappa$-chain condition, $|\bB|=\lambda$,  and for all $\mu<\lambda$, $\mu^{<\kappa}<\lambda$,
then  $\bB$  contains an independent subset of size $|\bB|$.
Theorem \ref{thm.indepimpliestop} then implies that $\bB$ has an ultrafilter with maximum Tukey type.

The next theorem shows that every ultrafilter on a 
  free Boolean algebra  has  maximum Tukey type.
Thus, for each infinite cardinal $\kappa$, the spectrum of the Tukey types of ultrafilters on $\Clop(2^{\kappa})$ is precisely $\{([\kappa]^{<\om},\sse)\}$.

\begin{fact}\label{prop.freealgchar}
For each infinite cardinal $\kappa$ and each ultrafilter $\mathcal{U}$ on $\Clop(2^{\kappa})$, 
$(\mathcal{U},{\ge})\equiv_T([\kappa]^{<\om},\sse)$.
\end{fact}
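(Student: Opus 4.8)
The plan is to use that $\Clop(2^\kappa)$ is the free Boolean algebra on $\kappa$ generators: it carries a distinguished independent family $\{a_\al:\al<\kappa\}$, it has cardinality $\kappa$, and — the key structural feature — each of its elements lies in the subalgebra generated by only finitely many of the $a_\al$ (every clopen subset of $2^\kappa$ depends on finitely many coordinates). Given an arbitrary ultrafilter $\mathcal{U}$ on $\Clop(2^\kappa)$, first I would pass to the ``correctly signed'' generators: for each $\al<\kappa$ let $c_\al$ be whichever of $a_\al,-a_\al$ lies in $\mathcal{U}$. Complementing some members of an independent family leaves it independent, so $\{c_\al:\al<\kappa\}$ is again an independent family, and it is now a subset of $\mathcal{U}$ of cardinality $\kappa$.

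Next I would apply the criterion in Fact \ref{facts.useful}(2) with $\mathcal{X}=\{c_\al:\al<\kappa\}$, noting that $|\Clop(2^\kappa)|=\kappa$. It then suffices to show that each infinite $\mathcal{Y}\sse\mathcal{X}$ is unbounded in $(\mathcal{U},\ge)$. Write $\mathcal{Y}=\{c_\al:\al\in S\}$ for some infinite $S\sse\kappa$, and suppose $b\in\mathcal{U}$ were an upper bound of $\mathcal{Y}$ in $(\mathcal{U},\ge)$, i.e.\ $b\le c_\al$ for all $\al\in S$. Choose a finite $F\sse\kappa$ with $b$ in the subalgebra generated by $\{a_\al:\al\in F\}$, and pick $\al_0\in S\setminus F$. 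Since the generators are independent, $a_{\al_0}$ is independent over that subalgebra, so $b\wedge(-c_{\al_0})>\bo$ unless $b=\bo$; but $b\le c_{\al_0}$ gives $b\wedge(-c_{\al_0})=\bo$, forcing $b=\bo\notin\mathcal{U}$, a contradiction. Hence no infinite subset of $\mathcal{X}$ is bounded in $(\mathcal{U},\ge)$, and Fact \ref{facts.useful}(2) delivers $(\mathcal{U},{\ge})\equiv_T([\kappa]^{<\om},\sse)$.

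Equivalently, one could reprise the proof of Theorem \ref{thm.indepimpliestop} with the independent family $\{c_\al:\al<\kappa\}$, which now sits inside $\mathcal{U}$: the map $F\mapsto\bw_{\al\in F}c_\al$ carries $[\kappa]^{<\om}$ into $\mathcal{U}$ (filters are closed under finite meets), the same computation shows it is a Tukey map, and $(\mathcal{U},\ge)\le_T([\kappa]^{<\om},\sse)$ is automatic since $(\mathcal{U},\ge)$ is directed of size $\kappa$. There is no serious obstacle here: the one point that does the work is the finite-support property of the free algebra, which is exactly what turns ``$b$ lies below infinitely many of the $c_\al$'' into ``$b=\bo$'', and the $\le_T$ half costs nothing.
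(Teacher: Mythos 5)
Your proof is correct and is essentially the paper's argument in algebraic rather than topological dress: the paper passes to the Stone space and uses the neighborhood base $\{c_{x\restriction F}:F\in[\kappa]^{<\om}\}$ of the point $x$ dual to $\mathcal{U}$, and $c_{x\restriction F}$ is exactly your finite meet $\bw_{\al\in F}c_\al$ of correctly signed generators, with the same finite-support property doing the work in both the cofinality and unboundedness checks. Your use of Fact \ref{facts.useful}(2) in place of exhibiting the cofinal/Tukey map directly is only a cosmetic variation.
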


\begin{proof}
Let $\kappa$ be an infinite cardinal.
The basic clopen sets of Clop$(2^{\kappa})$ are the sets 
$c_s=\{f\in 2^{\kappa}:f\contains s\}$,
where $s$ is any function from a finite subset of $\kappa$ into $2$.
Given an $x\in 2^{\kappa}$,
define a map $f$ from $[\kappa]^{<\om}$ into the neighborhood base of $x$ by 
letting $f(F)=c_{x\re F}$.
Then $f$ is a cofinal and unbounded map, so the neighborhood base of $x$ has cofinal type exactly $([\kappa]^{<\om},\sse)$.
Thus, every point in the Stone space $2^{\kappa}$ has neighborhood base of maximum Tukey type.
By Stone duality, this implies that each ultrafilter on Clop$(2^{\kappa})$ is Tukey equivalent to $([\kappa]^{<\om},\sse)$.
\end{proof}

\begin{rem}
The Stone space of each free Boolean algebra is homogeneous; that is, given any infinite cardinal $\kappa$,
for any two ultrafilters $\mathcal{U},\mathcal{V}$ on Clop$(2^{\kappa})$, there is a homeomorphism from Ult$($Clop$(2^{\kappa}))$ onto itself mapping $\mathcal{U}$ to $\mathcal{V}$
(see Exercise 4, page 139 in \cite{KoppelbergHB}).
Since  a homeomorphism maps any neighborhood base of $\mathcal{U}$ cofinally to any neighborhood base of $\mathcal{U}$, and vice versa,
 $\mathcal{U}$ is Tukey equivalent to $\mathcal{V}$.
In fact, homogeneity of the Stone space of any Boolean algebra implies  all its ultrafilters have the same Tukey type.
However, this says nothing about what  that Tukey type is.
We shall see in  Section  \ref{sec.IntAlg}
that it is possible to have an interval algebra in which all the ultrafilters have the same Tukey type $\kappa$, which is not the maximum type if $\kappa$ is an uncountable cardinal.
\end{rem}

Next, we investigate the Tukey spectra of completions of free algebras.
By Theorem \ref{cor.complete.top}, the completion  of Clop$(2^{\kappa})$, denoted 
 r.o.(Clop$(2^{\kappa})$),  always has an ultrafilter of the maximum  Tukey type $([2^{\kappa}]^{<\om},\sse)$.
In particular, the Cohen algebra r.o.(Clop$(2^{\om})$) has an ultrafilter of type $([\mathfrak{c}]^{<\om},\sse)$. 
This leads us to the following question.

\begin{question}\label{q.rofree.below}
Do all the ultrafilters in  the completion of a free Boolean algebra  have maximum Tukey type?
\end{question}

In Theorem \ref{thm.cohennot}, we will rule out some 
 possible  Tukey types  below the top for all completions of  free algebras.
We begin with two  propositions in which certain completeness or chain condition hypotheses rule out certain Tukey types of ultrafilters.

\begin{prop}\label{prop.kcomplete}
If $\kappa\ge \om$ and  $\bB$ is a $\kappa^+$-complete atomless Boolean algebra, then  $\bB$ has no ultrafilters of Tukey type $(\kappa,\le)$.
\end{prop}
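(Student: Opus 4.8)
The plan is to show that if $(\mathcal{U},\ge)\equiv_T(\kappa,\le)$ for some ultrafilter $\mathcal{U}$ on a $\kappa^+$-complete atomless Boolean algebra $\bB$, we reach a contradiction. Since $(\kappa,\le)$ is a directed partial order of cofinality $\kappa$, a Tukey equivalence gives a monotone cofinal map $g\colon(\kappa,\le)\to(\mathcal{U},\ge)$, and hence the image $\{g(\al):\al<\kappa\}$ is a cofinal (in the $\ge$ sense) subset of $\mathcal{U}$, i.e.\ a filter base for $\mathcal{U}$ which is $\subseteq$-decreasing-directed and well-ordered in type $\kappa$ under reverse inclusion in the sense that every tail is still a base. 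More precisely, by Fact \ref{facts.useful}(6) I may replace $\mathcal{U}$ by such a filter base $G=\{b_\al:\al<\kappa\}$ with $(G,\ge)\equiv_T(\kappa,\le)$, and by monotonicity I can arrange that $\al\le\beta$ implies $b_\al\ge b_\beta$, so $\{b_\al:\al<\kappa\}$ is a $\le$-decreasing chain whose downward closure in $\bB$ generates $\mathcal{U}$.

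Now I use $\kappa^+$-completeness: the chain $\{b_\al:\al<\kappa\}$ has length $\le\kappa$, so $b:=\bw_{\al<\kappa}b_\al$ exists in $\bB$. There are two cases. If $b=\bo$: since $\bB$ is atomless and $b_\al>\bo$ for all $\al$ (being members of a proper filter), consider the disjointification—but more directly, the key point is that $\{b_\al:\al<\kappa\}$ being a filter base means the map $\al\mapsto b_\al$ witnesses $(\mathcal{U},\ge)\le_T(\kappa,\le)$, while I must contradict $(\kappa,\le)\le_T(\mathcal{U},\ge)$, equivalently that there is no Tukey (unbounded) map $(\kappa,\le)\to(\mathcal{U},\ge)$. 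A set $X\subseteq\mathcal{U}$ is unbounded in $(\mathcal{U},\ge)$ iff $\bw X=\bo$ (no lower bound). So I must show: for every map $h\colon\kappa\to\mathcal{U}$ there is an unbounded $A\subseteq\kappa$ with $\bw\{h(\al):\al\in A\}>\bo$. Unbounded subsets of $(\kappa,\le)$ are exactly the cofinal ones, so I need: every $\kappa$-indexed family from $\mathcal{U}$ has a cofinal subfamily with nonzero meet. Using $\kappa^+$-completeness, $\bw_{\al<\kappa}h(\al)$ exists; if it is nonzero we are done with $A=\kappa$. If it is $\bo$, I want to thin out. Here is where atomlessness and completeness combine: I'll argue that $\{h(\al):\al<\kappa\}\subseteq\mathcal{U}$ cannot have the property that every cofinal subfamily meets to $\bo$, because $\mathcal{U}$ is an ultrafilter on a $\kappa^+$-complete algebra—if every cofinal (equivalently, every $\kappa$-sized) subfamily had meet $\bo$, then in particular $\bw_{\al\in S}h(\al)=\bo$ for all $S\in[\kappa]^{\kappa}$, but then fixing any $S$ and partitioning shows the $h(\al)$ are "eventually small" in a way incompatible with all lying in the $\kappa^+$-complete ultrafilter $\mathcal{U}$: a $\kappa^+$-complete ultrafilter is closed under meets of $\le\kappa$ elements, so $\bw_{\al<\kappa}h(\al)\in\mathcal{U}$, hence is nonzero, contradiction. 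Wait—$\mathcal{U}$ is only assumed to be an ultrafilter on a $\kappa^+$-complete \emph{algebra}, not a $\kappa^+$-complete ultrafilter. So the real argument must be different: from $(\mathcal{U},\ge)\equiv_T(\kappa,\le)$ one shows $\mathcal{U}$ \emph{is} generated by a $\le$-decreasing $\kappa$-chain $\{b_\al\}$, whose meet $b$ exists by completeness. If $b\in\mathcal{U}$ then $b$ is a minimum of the filter, forcing $\mathcal{U}$ principal, contradicting atomlessness. If $b\notin\mathcal{U}$ then $-b\in\mathcal{U}$, so $-b\wedge b_\al\in\mathcal{U}$ for all $\al$; but $\bw_\al(-b\wedge b_\al)=-b\wedge b=\bo$, so $\{-b\wedge b_\al:\al<\kappa\}$ is a $\le$-decreasing $\kappa$-chain in $\mathcal{U}$ with meet $\bo$—this shows $\mathcal{U}$ has a decreasing chain of length exactly $\kappa$ with trivial meet. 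I then argue this chain, being cofinal in $\mathcal{U}$, still generates $\mathcal{U}$, and exhibit an unbounded-but-meet-nonzero thinning to contradict $(\kappa,\le)\le_T(\mathcal{U},\ge)$: take any strictly increasing $\langle\al_\xi:\xi<\kappa\rangle$ and set $c_\xi=(-b\wedge b_{\al_\xi})\vee d_\xi$ where, using atomlessness, $d_\xi>\bo$ is chosen with $d_\xi\le b_{\al_\xi}\setminus b_{\al_{\xi+1}}$—then I show $\{c_\xi:\xi<\kappa\}$ is cofinal in $(\mathcal{U},\ge)$ hence unbounded, while its meet dominates each $d_\xi$... no, the meet could still be $\bo$. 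Let me instead choose $d_\xi$ all below a fixed nonzero element.

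So the clean argument, which I'd carry out in detail: fix the $\le$-decreasing $\kappa$-chain $\{b_\al:\al<\kappa\}$ cofinal in $\mathcal{U}$ with $\bw_\al b_\al=\bo$ (as above). By atomlessness pick disjoint nonzero $e_\al\le b_\al\setminus b_{\al+1}$ (possible since $b_\al>b_{\al+1}$, as the chain is strictly cofinal we may assume it strictly decreasing). Using $\kappa^+$-completeness, $e:=\bv_{\al<\kappa}e_\al$ exists. Now $-e\wedge b_\al\in\mathcal{U}$ for each $\al$ (since $b_\al\in\mathcal{U}$ and... hmm, need $-e\in\mathcal{U}$, i.e.\ $e\notin\mathcal{U}$: but $e\ge e_\al$ and $e_\al$ may or may not be in $\mathcal{U}$). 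The cleanest contradiction: consider the map $\al\mapsto b_\al$; it is cofinal $(\kappa,\le)\to(\mathcal{U},\ge)$, so $(\mathcal{U},\ge)\le_T(\kappa,\le)$ trivially, and for the reverse I directly show every map $h\colon\kappa\to\mathcal{U}$ is bounded on a cofinal set by reducing to the chain: each $h(\al)\supseteq$ some $b_{\beta(\al)}$ in $\mathcal{U}$; pick a cofinal $S$ on which $\beta$ is bounded, say by $\gamma$, then $\bw_{\al\in S}h(\al)\ge b_\gamma>\bo$, so $h[S]$ is bounded—hence $h$ is not Tukey. Thus no Tukey map $(\kappa,\le)\to(\mathcal{U},\ge)$ exists, so $(\kappa,\le)\not\le_T(\mathcal{U},\ge)$. \textbf{The main obstacle} is establishing that the hypothesis $(\mathcal{U},\ge)\equiv_T(\kappa,\le)$ forces $\mathcal{U}$ to be generated by a $\le$-decreasing chain of length exactly $\kappa$: one must push the cofinal monotone map from $(\kappa,\le)$ through and check the tail of a chain remains a filter base, and handle the case distinction on whether $\bw b_\al$ lands inside or outside $\mathcal{U}$, using $\kappa^+$-completeness precisely to guarantee that meet exists. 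Atomlessness enters only to rule out the principal/minimum case, so that we genuinely land in the situation where the generating chain has meet $\bo$ and the boundedness argument of the previous sentence applies.
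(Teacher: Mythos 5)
Your first steps are on the right track: from $(\mathcal{U},\ge)\equiv_T(\kappa,\le)$ one extracts (via Isbell's monotone-map criterion) a strictly decreasing chain $\lgl b_\al:\al<\kappa\rgl$ cofinal in, hence generating, $\mathcal{U}$, and $\kappa^+$-completeness plus your case analysis on $\bw_{\al<\kappa}b_\al$ shows one may assume this meet is $\bo$. But the contradiction you finally settle on is not valid. You claim that every map $h\colon\kappa\to\mathcal{U}$ is bounded on a cofinal set because each $h(\al)$ lies above some $b_{\beta(\al)}$ and one can ``pick a cofinal $S$ on which $\beta$ is bounded.'' Take $h(\al)=b_\al$ itself: then $\beta(\al)=\al$, which is unbounded on every cofinal subset of $\kappa$ (for $\kappa$ regular), so no such $S$ exists. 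Worse, the strategy cannot be repaired: once $\mathcal{U}$ is generated by a strictly decreasing $\kappa$-chain with meet $\bo$, the map $\al\mapsto b_\al$ is both cofinal and Tukey, so $(\mathcal{U},\ge)\equiv_T(\kappa,\le)$ really does hold, and no contradiction can be wrung out of Tukey combinatorics at that point. (Note also that your final step uses neither atomlessness nor completeness; if it worked, it would equally ``rule out'' the ultrafilters of type $(\om_1,\le)$ on interval algebras produced in Theorem \ref{interval algebras}, which do exist and are generated by strictly decreasing $\om_1$-chains.)

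The missing idea, which is the actual content of the proposition, is that in a $\kappa^+$-complete atomless algebra such a generating chain cannot exist at all: the contradiction comes from the ultrafilter property of $\mathcal{U}$, not from a failed Tukey reduction. Normalize so that $b_0=\bone$ and $b_\gamma=\bw_{\al<\gamma}b_\al$ at limits, and recall $\bw_{\al<\kappa}b_\al=\bo$. For each $\al$ the difference $a_\al=b_\al\wedge-b_{\al+1}$ is nonzero; by atomlessness split it into nonzero disjoint pieces $a_{\al,0},a_{\al,1}$, and by $\kappa^+$-completeness form $c_i=\bv_{\al<\kappa}a_{\al,i}$ for $i<2$. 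Then $c_0\vee c_1=\bone$ and $c_0\wedge c_1=\bo$, so exactly one $c_i$ belongs to $\mathcal{U}$; but for every $\al$ and each $i$ we have $b_\al\wedge-c_i\ge a_{\al,1-i}>\bo$, so $c_i\not\ge b_\al$, and since the chain generates $\mathcal{U}$ neither $c_0$ nor $c_1$ can be in $\mathcal{U}$ --- contradiction. In particular your remark that ``atomlessness enters only to rule out the principal/minimum case'' is exactly where the proposal goes astray: atomlessness (together with the $\kappa$-sized joins provided by $\kappa^+$-completeness) is needed for this splitting step, which your write-up never reaches.
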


\begin{proof}
Let $\mathcal{U}$ be an ultrafilter on $\bB$ and suppose toward a contradiction that there is a strictly decreasing sequence $\lgl b_{\al}:\al<\kappa\rgl$ cofinal in, and thus generating, $\mathcal{U}$.
Without loss of generality, we may assume that for each limit ordinal $\gamma<\kappa$,
$b_{\gamma}=\bigwedge_{\al<\gamma} b_{\al}$ and that $b_0=\bone$.
Since $\lgl b_{\al}:\al<\kappa\rgl$ generates an ultrafilter, it follows that $\bigwedge_{\al<\kappa}b_{\al}=\bo$.
For each $\al<\kappa$, define $a_{\al}=b_{\al}\wedge -b_{\al+1}$.
Since $\bB$ is atomless, there are non-zero $a_{\al, 0},a_{\al, 1}$
partitioning $a_{\al}$.
Let $c_i=\bigvee_{\al<\kappa} a_{\al,i}$, for $i<2$.
Then $c_0\vee c_1=\bone$ and $c_0\wedge c_1=\bo$;
so exactly one of $c_0,c_1$ must be in $\mathcal{U}$.
But for each $i<2$, we have that $c_i\not\ge b_{\al}$ for all $\al<\kappa$.
Since $\lgl b_{\al}:\al<\kappa\rgl$ generates $\mathcal{U}$, this implies that neither of $c_0,c_1$ is in $\mathcal{U}$, contradiction.
\end{proof}

\begin{prop}\label{prop.kcc}
Let $\kappa$ be a regular uncountable cardinal.
If $\bB$ is $\kappa$-c.c.,
then for all $\lambda\ge\kappa$, $\bB$ has no ultrafilters of Tukey type $(\lambda,\le)$.
\end{prop}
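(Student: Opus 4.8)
The plan is to argue by contradiction. Suppose $\mathcal{U}$ is an ultrafilter on $\bB$ with $(\mathcal{U},\ge)\equiv_T(\lambda,\le)$ and $\lambda\ge\kappa$; we take $\lambda$ to be regular. From this I will extract a strictly decreasing sequence $\lgl b_\al:\al<\lambda\rgl$ of elements of $\mathcal{U}$; then the successive differences $b_\al\wedge-b_{\al+1}$ will form an antichain in $\bB$ of size $\lambda\ge\kappa$, contradicting the $\kappa$-chain condition. Note first that $\mathcal{U}$ must be nonprincipal, since $\lambda$ is infinite and hence $(\lambda,\le)\not\equiv_T 1$.

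The crucial ingredient is that $(\mathcal{U},\ge)$ is $\lambda$-directed, i.e.\ every subfamily $\mathcal{Y}\sse\mathcal{U}$ with $\abs{\mathcal{Y}}<\lambda$ has a lower bound (in the Boolean order) lying in $\mathcal{U}$. To see this, recall that the least cardinality of an unbounded subset is monotone under Tukey reductions: if $g\colon(P,\le_P)\to(Q,\le_Q)$ is a Tukey map and $A\sse P$ is unbounded, then $g[A]$ is unbounded in $Q$ and $\abs{g[A]}\le\abs{A}$. Since $(\mathcal{U},\ge)\le_T(\lambda,\le)$, there is a Tukey map $(\mathcal{U},\ge)\to(\lambda,\le)$; and because $\lambda$ is regular, every unbounded subset of $(\lambda,\le)$ is cofinal and so has size $\lambda$. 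Hence every unbounded subset of $(\mathcal{U},\ge)$ has size $\lambda$, which is exactly the asserted $\lambda$-directedness. (This is where regularity of $\lambda$ is used.)

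Now I would build $\lgl b_\al:\al<\lambda\rgl$ by recursion: set $b_0=\bone$, and given $\lgl b_\beta:\beta<\al\rgl$ with $\al<\lambda$, use $\lambda$-directedness to choose $b'\in\mathcal{U}$ with $b'\le b_\beta$ for all $\beta<\al$, then (since $\mathcal{U}$ is nonprincipal) choose $c\in\mathcal{U}$ with $b'\not\le c$ and put $b_\al=b'\wedge c\in\mathcal{U}$; then $b_\al<b'\le b_\beta$ for every $\beta<\al$, with the first inequality strict. Finally set $a_\al=b_\al\wedge-b_{\al+1}$ for $\al<\lambda$. Each $a_\al\ne\bo$ because $b_{\al+1}<b_\al$; and for $\al<\beta$ we have $a_\al\le-b_{\al+1}\le-b_\beta$ (using $b_\beta\le b_{\al+1}$) while $a_\beta\le b_\beta$, so $a_\al\wedge a_\beta=\bo$. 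Thus $\{a_\al:\al<\lambda\}$ is an antichain in $\bB$ of cardinality $\lambda\ge\kappa$, contradicting that $\bB$ is $\kappa$-c.c. The only genuinely non-routine step is establishing the $\lambda$-directedness of $(\mathcal{U},\ge)$; the recursion and the disjointness computation afterwards are entirely straightforward, and I anticipate no further obstacle.
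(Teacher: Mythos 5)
Your proof is correct, and it in fact supplies the details that the paper's two-line argument leaves implicit. The paper's proof simply observes that a $\kappa$-c.c.\ algebra has no strictly decreasing chains of order type $\lambda\ge\kappa$ (the point being exactly your computation: the successive differences $b_\al\wedge-b_{\al+1}$ form pairwise disjoint nonzero elements, i.e.\ an antichain of size $\lambda$), and hence no ultrafilter can be \emph{generated} by such a chain; the step from ``$(\mathcal{U},\ge)\equiv_T(\lambda,\le)$'' to the existence of a cofinal strictly decreasing $\lambda$-chain is not spelled out. You close that gap from the other side of the Tukey equivalence: rather than extracting a cofinal chain (the cofinality side, which requires the standard but nontrivial fact that a directed set Tukey equivalent to a regular $\lambda$ has a cofinal subset of type $\lambda$), you use the reduction $(\mathcal{U},\ge)\le_T(\lambda,\le)$ plus regularity of $\lambda$ to get $\lambda$-directedness of $(\mathcal{U},\ge)$ (the additivity side), and then nonprincipality to build a strictly decreasing $\lambda$-chain that need not be cofinal; the antichain of differences then contradicts the $\kappa$-chain condition exactly as in the paper. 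This is a clean and somewhat more self-contained route, and your derivation of $\lambda$-directedness from the unbounded map into $(\lambda,\le)$ is exactly right. One remark on your reduction ``take $\lambda$ regular'': since $(\lambda,\le)\equiv_T(\cf\lambda,\le)$, your argument covers every $\lambda$ with $\cf\lambda\ge\kappa$; for singular $\lambda\ge\kappa$ with $\cf\lambda<\kappa$ neither your argument nor the paper's excludes such a type read literally up to Tukey equivalence (the paper, too, only rules out strictly decreasing chains of type $\lambda$), and only the regular case is used later in Theorem \ref{thm.cohennot}, so your restriction matches the intended content of the proposition.
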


\begin{proof}
$\bB$ is $\kappa$-c.c. implies there are no strictly decreasing chains of order type $\lambda$ in $\bB$ for any $\lambda\ge\kappa$. 
In particular, no ultrafilter in $\bB$ can be generated by a strictly decreasing chain of order type $\lambda$.
\end{proof}

The following fact is due to Isbell.
Recall that  a partial ordering $(Q,\le_Q)$ is {\em relatively complete} if every subset of $Q$ which is bounded from below has a greatest lower bound  in $Q$.

\begin{prop}[Isbell, \cite{Isbell65}]\label{prop.monotone}
Let $(P,\le_P)$ and $(Q,\le_Q)$ be directed partial orderings. If $Q$ is relatively complete, then $P\ge_T Q$ if and only if there exists a monotone map $f \colon P\ra Q$  which has cofinal range.
\end{prop}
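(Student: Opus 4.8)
The plan is to prove the two implications separately; the reverse direction is immediate and the forward direction carries all the content. For the reverse direction, suppose $f\colon P\to Q$ is monotone with cofinal range. Then by Fact~\ref{fact.mono}(1), $f$ is a cofinal map from $P$ to $Q$, so $Q\le_T P$, that is, $P\ge_T Q$; no use of relative completeness is needed here.

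For the forward direction, suppose $P\ge_T Q$, i.e.\ $Q\le_T P$, so that there is a cofinal map $P\to Q$ and hence, by Schmidt's theorem in \cite{Schmidt55}, a Tukey (unbounded) map $h\colon Q\to P$. I would then ``monotonize'' $h$ using relative completeness of $Q$, as follows. Fix any $q_0\in Q$, and for $p\in P$ put $D_p=\{q_0\}\cup\{q\in Q: h(q)\le_P p\}$. The key observation is that $D_p$ is bounded above in $Q$: the image under $h$ of $\{q: h(q)\le_P p\}$ lies in the bounded set $\{p'\in P:p'\le_P p\}$, so since $h$ is Tukey this set is bounded in $Q$, and adjoining the single element $q_0$ preserves boundedness because $Q$ is directed. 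Now $D_p$ is nonempty and bounded above, so its set $E_p$ of upper bounds is nonempty and is bounded below (by any member of $D_p$); by relative completeness $\bigwedge E_p$ exists, and one checks at once that it is in fact the least upper bound of $D_p$. Define $f(p)=\bigwedge E_p$.

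It then remains to verify that $f$ is monotone with cofinal range, which is routine: if $p\le_P p'$ then $D_p\subseteq D_{p'}$, hence $E_{p'}\subseteq E_p$, hence $f(p)=\bigwedge E_p\le_Q\bigwedge E_{p'}=f(p')$; and given $q\in Q$, taking $p=h(q)\in P$ gives $q\in D_p$ and so $q\le_Q\bigwedge E_p=f(p)$, so the range is cofinal. The main obstacle is locating the right construction in the forward direction: the useful consequence of relative completeness is that bounded-above subsets have suprema (obtained as the infimum of the set of their upper bounds), and the essential use of the hypothesis $P\ge_T Q$ is precisely in showing that each $D_p$ is bounded above, via the Tukey map $h$. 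The only technical wrinkle is guaranteeing $D_p\neq\emptyset$ even for ``small'' $p$ admitting no $q$ with $h(q)\le_P p$, which is why the fixed element $q_0$ is thrown in.
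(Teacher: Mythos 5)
Your proof is correct. The paper states this proposition without proof, simply citing Isbell, and your argument is exactly the classical one behind that citation: obtain a Tukey map $h\colon Q\to P$ from $P\ge_T Q$ via Schmidt's equivalence, then define $f(p)$ as the supremum of $\{q: h(q)\le_P p\}$, which exists because Tukeyness of $h$ makes this set bounded above and relative completeness (infima of bounded-below sets, applied to the nonempty set of upper bounds) yields suprema of nonempty bounded-above sets. Your insertion of the fixed element $q_0$ is a clean way to handle the degenerate case where no $q$ satisfies $h(q)\le_P p$, and both the monotonicity and cofinal-range verifications are as they should be.
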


\begin{thm}\label{thm.cohennot}
Let $\kappa$ be an infinite cardinal.
Then each  ultrafilter on the completion of the free algebra on $\kappa$ generators  is not Tukey reducible to 
$\prod_{i\le n}\kappa_i$ for any finite  collection of regular cardinals $\{\kappa_i:i\le n\}$.
\end{thm}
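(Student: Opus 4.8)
The plan is to combine Isbell's monotone‑map characterization (Proposition~\ref{prop.monotone}) with the completeness and countable chain condition of $\bB:=\ro(\Clop(2^{\kappa}))$, the completion of the free algebra on $\kappa$ generators. Recall $\bB$ is complete, atomless, and ccc (the free algebra $\Clop(2^{\kappa})$ is ccc, and a Boolean algebra is ccc iff its completion is). Fix an ultrafilter $\U$ on $\bB$; I must show $(\U,\ge)\not\le_T\prod_{i\le n}\kappa_i$ for every finite family of regular cardinals. First I would record two facts. \emph{(i)} For any ultrafilter on a complete Boolean algebra, $(\U,\ge)$ is relatively complete: if $S\sse\U$ has a lower bound $b\in\U$ in $(\U,\ge)$ (so $s\le b$ for all $s\in S$), then $\bv_{\bB}S\le b$, hence $\bv_{\bB}S\in\U$, and this is the greatest lower bound of $S$ in $(\U,\ge)$. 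Thus Proposition~\ref{prop.monotone} applies, converting any relation $(\U,\ge)\le_T D$ ($D$ directed) into a \emph{monotone} map $f\colon D\ra(\U,\ge)$ with cofinal range. \emph{(ii)} Since $\bB$ is ccc it has no strictly decreasing $\om_1$-chain (cf.\ the proof of Proposition~\ref{prop.kcc}); hence for $\lambda$ regular uncountable, every $\le$-decreasing $\lambda$-sequence in $\bB$ is eventually constant --- the stages at which it takes a value strictly below all earlier ones form a strictly decreasing chain, so there are only countably many of them, and they are bounded below $\lambda$.

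The heart of the proof is a reduction lemma: \emph{if $\lambda$ is regular uncountable and $P$ is directed, then $(\U,\ge)\le_T P\times\lambda$ implies $(\U,\ge)\le_T P$.} To prove it I would take, via \emph{(i)}, a monotone $f\colon P\times\lambda\ra(\U,\ge)$ with cofinal range. For each $p\in P$ the sequence $\lgl f(p,\xi):\xi<\lambda\rgl$ is $\le$-decreasing, hence eventually constant by \emph{(ii)}, so $h(p):=\bw_{\xi<\lambda}f(p,\xi)$ is attained and lies in $\U$. Since meets respect the coordinatewise order, $h$ is monotone from $P$ into $(\U,\ge)$; and given $u\in\U$, cofinality of the range of $f$ yields $(p,\xi)$ with $f(p,\xi)\le u$, whence $h(p)\le u$, so the range of $h$ is cofinal in $(\U,\ge)$. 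By Fact~\ref{fact.mono}(1), $h$ is a cofinal map, witnessing $(\U,\ge)\le_T P$.

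Given the lemma, the theorem follows quickly. Suppose $(\U,\ge)\le_T\prod_{i\le n}\kappa_i$ with each $\kappa_i$ regular. Applying the reduction lemma once for each $i\le n$ with $\kappa_i$ uncountable --- absorbing the remaining factors into the lemma's ``$P$'' each time --- yields $(\U,\ge)\le_T\prod_{i\in I}\kappa_i$, where $I=\{i\le n:\kappa_i\le\om\}$. By Fact~\ref{facts.useful}(5) (collapsing the $\om$-factors and absorbing the finite ones), $\prod_{i\in I}\kappa_i$ is Tukey equivalent to $(\om,\le)$ if some $\kappa_i=\om$, and to the one-point order otherwise. In the latter case $(\U,\ge)\le_T 1$, so $\U$ is principal, contradicting atomlessness. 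Otherwise $(\U,\ge)\le_T(\om,\le)$, so $\U$ has a countable cofinal subset and hence is generated by a $\le$-decreasing sequence $\lgl g_n:n<\om\rgl$ of elements of $\U$. If $\bw_n g_n>\bo$ then $\bw_n g_n\in\U$ (as the $g_n$ generate $\U$) and equals some $g_m$, making $\U$ principal --- again impossible. If $\bw_n g_n=\bo$, pass to a strictly decreasing subsequence (still generating $\U$) and run the splitting argument from the proof of Proposition~\ref{prop.kcomplete}: by atomlessness write $g_n-g_{n+1}=a_n^0\vee a_n^1$ with $a_n^0,a_n^1$ nonzero and disjoint, and set $c_i=\bv_n a_n^i$; then $c_0\wedge c_1=\bo$ while $c_0\vee c_1=g_0-\bw_n g_n=g_0\in\U$, so some $c_i\in\U$, hence $c_i\ge g_m$ for some $m$, hence $c_i\ge a_m^{1-i}$ --- contradicting $a_m^{1-i}\le c_{1-i}$ and $c_0\wedge c_1=\bo$.

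The main obstacle I anticipate is the reduction lemma: recognizing that the regular cardinals should be peeled off one at a time, and that the countable chain condition is precisely what forces each meet $\bw_{\xi<\lambda}f(p,\xi)$ to be attained (so that $h$ is $\U$-valued), together with the routine verifications that $h$ stays monotone with cofinal range. The limit‑stage bookkeeping behind the ``eventually constant'' claim in \emph{(ii)} also needs a little care.
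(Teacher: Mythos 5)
Your proof is correct, and its skeleton matches the paper's: both routes go through Isbell's monotone-map characterization (Proposition \ref{prop.monotone}) via relative completeness of $(\U,\ge)$, both peel off the uncountable regular factors one at a time using the fact that monotone images stabilize along an uncountable regular coordinate (no strictly decreasing uncountable chains in a ccc algebra), and both dispose of the remaining countable/principal possibilities using atomlessness and completeness, exactly as in Propositions \ref{prop.kcomplete} and \ref{prop.kcc}. The genuine difference is the mechanism of the inductive step. The paper normalizes the cardinals to be strictly increasing, lets each section over the largest factor $\kappa_n$ attain its minimum at some $\beta(\bar\al)$, and then uses the counting bound $\abs{\prod_{i<n}\kappa_i}=\kappa_{n-1}<\kappa_n$ to find one level $\gamma<\kappa_n$ past which every section has stabilized, reducing to the slice at $\gamma$. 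Your reduction lemma instead takes the stabilized value pointwise, $h(p)=\bw_{\xi<\lambda}f(p,\xi)$, and verifies directly that $h$ is monotone with cofinal range; this dispenses with the counting argument, so the peeled factor need not exceed the cardinality of the remaining product and no ``strictly increasing'' normalization (with its implicit collapsing of repeated factors via Fact \ref{facts.useful}(5)) is needed---only ccc and regularity of the peeled cardinal, which is the same stabilization fact the paper uses to get the attained minimum. Your endgame for the $(\om,\le)$ case re-derives the splitting argument of Proposition \ref{prop.kcomplete} inline; observing that a nonprincipal ultrafilter with a countable base is Tukey equivalent to $(\om,\le)$ and then citing that proposition would shorten it, but as written it is complete and correct.
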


\begin{proof}
Let $\kappa$ be an infinite cardinal and  $\mathcal{U}$ be any ultrafilter on $\ro(\Clop(2^{\kappa}))$.
We will show that for any collection of finitely many regular cardinals $\kappa_i$, $i\le n$, $\prod_{i\le n}\kappa_i\not\ge_T\mathcal{U}$.
It suffices  to consider only infinite cardinals $\kappa_i$, as
all ultrafilters on $\ro(\Clop(2^{\kappa}))$ are nonprincipal since $\ro(\Clop(2^{\kappa}))$ is atomless.
Since $\mathcal{U}$ is an ultrafilter, it is upwards closed.
In particular, the directed partial ordering $(\mathcal{U},\ge)$ is  relatively complete.
Thus, by Proposition \ref{prop.monotone},
whenever $(P,\le_P)\ge_T(\mathcal{U},\ge)$, there is a monotone cofinal map witnessing this.

Suppose $n=0$ and let $\kappa_0$ be any regular infinite  cardinal.
If $\kappa_0\ge_T\mathcal{U}$, then by Proposition \ref{prop.monotone}, there is a monotone cofinal map from $\kappa_0$ into $\mathcal{U}$.
Any monotone map will take $\kappa_0$ to a decreasing  sequence in $\mathcal{U}$, which is either eventually constant or else has a strictly decreasing subsequence of cofinality $\kappa_0$.
Propositions \ref{prop.kcomplete} and \ref{prop.kcc} imply that $\mathcal{U}$ is  not generated by any strictly decreasing infinite sequence of members of $\mathcal{U}$.
Since $\mathcal{U}$ is not principal it cannot be generated by a single member.
Thus, $\kappa_0\not\ge_T\mathcal{U}$.

Now suppose that $n\ge 1$ and 
given any infinite regular cardinals $\kappa_i$, $i<n$, 
$\prod_{i<n}\kappa_i\not\ge_T\mathcal{U}$.
Let $\{\kappa_i:i\le n\}$ be any collection of infinite  regular cardinals, and without loss of generality, assume they are indexed in strictly increasing order.
 Suppose that $f \colon \prod_{i\le n}\kappa_i\ra\mathcal{U}$ is a monotone  map into $\mathcal{U}$.
Fix a  sequence $\bar{\al}\in\prod_{i<n}\kappa_i$.
$f$ maps the strictly increasing sequence $\lgl {\bar{\al}}^{\frown}\beta: \beta\in \kappa_n\rgl$ in
$\prod_{i\le n}\kappa_i$
  to a decreasing (not necessarily strictly decreasing) sequence in $\mathcal{U}$, since $f$ is monotone.
Since $\kappa_n$ is uncountable and $\mathcal{U}$ has no uncountable strictly decreasing sequences,  there is some  $\beta(\bar{\al})\in\kappa_n$  such that
 $f({\bar{\al}}^{\frown}\beta(\bar{\al}))=
\min\{f({\bar{\al}}^{\frown}\beta):\beta\in\kappa_n\}$.
Let  $\gamma$ denote
$\sup\{  \beta(\bar{\al}): \bar{\al}\in\prod_{i<n}\kappa_i\}$.
Since $|\prod_{i<n}\kappa_i|=\kappa_{n-1}<\kappa_n$, it follows that
$\gamma<\kappa_n$.
Since $f$ is monotone,
the $f$-image of $\prod_{i\le n}\kappa_i$ is bounded below in $\mathcal{U}$ by the set $\{f(\bar{\al}^{\frown}\gamma):\bar{\al}\in\prod_{i<n}\kappa_i\}$.
By the induction hypothesis, this is not possible, since the set 
$\{(\bar{\al})^{\frown}\gamma:\bar{\al}\in\prod_{i<n}\kappa_i\}$ is isomorphic as a partially ordered set to $\prod_{i<n}\kappa_i$.
Thus, $f$ cannot be a cofinal map.

Therefore, there is no monotone cofinal map from $\prod_{i\le n}\kappa_i$ into $\mathcal{U}$. 
Hence, by 
Proposition \ref{prop.monotone},  
 $\prod_{i\le n}\kappa_i\not\ge_T\mathcal{U}$.
\end{proof}

In particular,  the Cohen algebra has  no ultrafilters of Tukey type $1$, $\om$, $\om_1$,  $\mathfrak{c}$, $\om\times\om_1$, or any finite product of regular cardinals $\kappa_i$, $i\le n$, where $\om\le \kappa_0<\kappa_1<\dots<\kappa_n\le \mathfrak{c}$.

\begin{question}
Does $\ro(\Clop(2^{\kappa}))$ have an ultrafilter Tukey equivalent to $\prod_{i\in I}\kappa_i$ for some infinite collection of regular infinite cardinals?
In particular, does the Cohen algebra have an ultrafilter Tukey equivalent to $(\om^{\om},\le)$?
\end{question}

\begin{rem}
Milovich in \cite{Milovich12} defines a preorder to be \textit{cofinally rectangular} if it is cofinally equivalent to the product of some finite  collection of regular cardinals. Theorem \ref{thm.cohennot} shows that all  ultrafilters on the completion of a free Boolean algebra (on infinitely many generators) are not cofinally rectangular. 
We will show in 
Section \ref{sec.IntAlg}, Theorem \ref{interval algebras}, 
 that there are Boolean algebras (namely interval algebras) which have only cofinally rectangular ultrafilters.
\end{rem}

The following simple fact shows that each finite-cofinite Boolean algebra has Tukey spectrum of size two, consisting exactly of the minimum and the maximum Tukey types.

\begin{fact}\label{prop.fincofin}
Let $X$ be any infinite set, and let $\bB$ denote the finite-cofinite algebra on $X$.
Then the  ultrafilters on $\bB$ consist exactly of the  principal ultrafilters and the cofinite ultrafilter.
Thus, the Tukey spectrum of $\bB$ is $\{(1,\le), ([\abs{X}]^{<\om},\sse)\}$.
\end{fact}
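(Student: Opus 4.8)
The plan is to analyze the ultrafilters on $\bB$, the finite-cofinite algebra on an infinite set $X$, directly from the definition. Recall that $\bB$ consists of all subsets $A\sse X$ such that $A$ is finite or $X\setminus A$ is finite. First I would observe that for each $x\in X$, the collection $\U_x=\{A\in\bB: x\in A\}$ is an ultrafilter (it is a principal ultrafilter), and these are pairwise distinct. I would also check that $\mathcal{C}=\{A\in\bB: X\setminus A \text{ is finite}\}$, the collection of cofinite sets, is a filter on $\bB$, and indeed an ultrafilter: given any $A\in\bB$, either $A$ or $X\setminus A$ is cofinite (since one of them is finite), so $\mathcal{C}$ decides every element, hence is an ultrafilter.

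The main step is to show there are no others. Suppose $\U$ is an ultrafilter on $\bB$ that contains some finite set $A$; choosing $A\in\U$ finite of minimal cardinality, I would argue that $|A|=1$, say $A=\{x\}$: otherwise write $A$ as a disjoint union of two nonempty finite (hence in $\bB$) sets, and $\U$ must contain one of them, contradicting minimality. Then $\{x\}\in\U$ forces $\U=\U_x$, since any $B\in\bB$ with $x\notin B$ is disjoint from $\{x\}$ and so cannot be in $\U$, while $x\in B$ gives $\{x\}\le B$ so $B\in\U$. On the other hand, if $\U$ contains no finite set, then for every $A\in\bB$ the set $A$ is not in $\U$ when $A$ is finite, so $X\setminus A\in\U$; in particular every cofinite set is in $\U$, whence $\mathcal{C}\sse\U$, and since $\mathcal{C}$ is already an ultrafilter, $\U=\mathcal{C}$. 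This establishes that the ultrafilters are exactly the $\U_x$ for $x\in X$ together with $\mathcal{C}$.

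For the Tukey spectrum: each principal ultrafilter $\U_x$ has a least element (namely $\{x\}$), so $(\U_x,\ge)$ has a maximum element and is thus Tukey equivalent to $(1,\le)$; by Fact \ref{fact.mono}(2) it contributes the minimum Tukey type. For the cofinite ultrafilter $\mathcal{C}$, I would exhibit an explicit witness for Fact \ref{facts.useful}(2) with $\kappa=|X|=|\bB|$: since $X$ is infinite, $|\bB|=|X|$. Fix an enumeration $\{x_\al:\al<|X|\}$ of $X$, and set $\mathcal{X}=\{X\setminus\{x_\al\}:\al<|X|\}$, a subset of $\mathcal{C}$ of cardinality $|X|$. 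For any infinite $\mathcal{Y}\sse\mathcal{X}$, a lower bound $B$ in $\mathcal{C}$ would satisfy $B\le X\setminus\{x_\al\}$ for infinitely many $\al$, i.e.\ $B$ avoids infinitely many points of $X$, so $X\setminus B$ is infinite, contradicting $B\in\mathcal{C}\sse\bB$ (which requires $X\setminus B$ finite). Hence $\mathcal{Y}$ is unbounded in $\mathcal{C}$, and Fact \ref{facts.useful}(2) gives $(\mathcal{C},\ge)\equiv_T([|X|]^{<\om},\sse)$. Therefore $\mathrm{Ts}(\bB)=\{(1,\le),([|X|]^{<\om},\sse)\}$.

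I do not expect any serious obstacle here; the only point requiring a little care is the minimality argument identifying principal ultrafilters, and remembering that $|\bB|=|X|$ for infinite $X$ so that the maximum type is indexed by $|X|$ rather than $2^{|X|}$. The verification of Fact \ref{facts.useful}(2) via the family of co-singletons is routine.
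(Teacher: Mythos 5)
Your proposal is correct and follows essentially the same route as the paper: identify the ultrafilters as the principal ones together with the cofinite filter, then observe that the principal ones have minimal type and the cofinite one has maximal type. The only cosmetic difference is that the paper obtains $(\mathcal{C},\supseteq)\equiv_T([\abs{X}]^{<\om},\subseteq)$ immediately from the complementation isomorphism $A\mapsto X\setminus A$ onto the finite sets, whereas you verify the criterion of Fact \ref{facts.useful}(2) using the family of co-singletons; both arguments are sound.
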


\begin{proof}
The ultrafilter of cofinite subsets of $X$ is isomorphic to $[|X|]^{<\om}$.
If an ultrafilter on $\bB$ contains a finite set, then it is a principal ultrafilter.
\end{proof}

This section closes with the Tukey spectra of Boolean algebras  generated by some almost disjoint family on an infinite set.
Let $\lambda$ be an infinite  cardinal.
A family $\mathcal{A}\sse\mathcal{P}(\lambda)$ is
{\em almost disjoint}
if  for all pairs $a,b\in\mathcal{A}$,  $|a\cap b|<\om$.
Given an almost disjoint family $\mathcal{A}\sse\mathcal{P}(\lambda)$,
the {\em almost disjoint Boolean algebra generated by $\mathcal{A}$} is 
the subalgebra  of $\mathcal{P}(\lambda)$ generated by $\mathcal{A}\cup [\lambda]^{<\om}$.
Any Boolean algebra generated from an almost disjoint family is superatomic. 
(See Example 0.1 on page 721 in \cite{RoitmanHBBAVol3}.)

\begin{thm}\label{thm.adfamily}
Let $\mathcal{A}$ be an almost disjoint family on an infinite cardinal $\lambda$ with $|\mathcal{A}|=\kappa$, 
and let $\bB$ denote the  subalgebra  of $\mathcal{P}(\lambda)$ generated by $\mathcal{A}$.
Then every ultrafilter on $\bB$  has Tukey type $1$ or  $[\mu]^{<\om}$ for some $\om\le \mu\le\kappa$.
The minimum and maximum types, $1$ and $[\kappa]^{<\om}$, are  always realized.
For $\om\le\mu<\kappa$,  $[\mu]^{<\om}$ is realized as the Tukey type of some ultrafilter on $\bB$ if and only if 
 there is an $a\in\mathcal{A}$ such that the set $\{a\cap-b:b\in\mathcal{A}\setminus\{a\}\}$ has cardinality $\mu$.
\end{thm}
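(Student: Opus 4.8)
The plan is to analyze ultrafilters on the almost disjoint algebra $\bB$ by cases according to how they interact with the generators. An ultrafilter $\U$ on $\bB$ is either principal (when it contains a singleton, and then has Tukey type $1$), or it is nonprincipal. Since $\bB$ is superatomic, every nonprincipal $\U$ must contain a cofinite-type element; more usefully, I would first observe that a nonprincipal $\U$ on $\bB$ either contains some $a \in \mathcal{A}$, or contains $-a$ for every $a \in \mathcal{A}$. In the latter case, using that $\mathcal{A}$ is almost disjoint and the algebra is generated by $\mathcal{A} \cup [\lambda]^{<\om}$, one checks that such a $\U$ behaves like the cofinite ultrafilter and, by an argument parallel to Fact \ref{prop.fincofin}, has maximum Tukey type $[\kappa]^{<\om}$ (here $|\bB| = \kappa$ since $\kappa \geq \om$). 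For the minimum type $1$, pick any principal ultrafilter given by a point of $\lambda$; for the maximum, the "co-everything" ultrafilter just described works — this establishes that $1$ and $[\kappa]^{<\om}$ are always realized.

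**The case of $\U \ni a$ for some $a \in \mathcal{A}$.** Here I would restrict attention to the relative algebra $\bB \re a = \{ b \wedge a : b \in \bB \}$; by Fact \ref{facts.useful}(6) and the fact that $\{ b \wedge a : b \in \U \}$ is a filter base for $\U$, the Tukey type of $\U$ is that of an ultrafilter on $\bB \re a$. Because $\mathcal{A}$ is almost disjoint, for any $a' \in \mathcal{A} \setminus \{a\}$ the element $a \wedge a'$ is finite, so modulo finite sets $\bB \re a$ is generated by $a$ alone together with $[\lambda]^{<\om}$ restricted below $a$ — that is, $\bB \re a$ is (essentially) the finite-cofinite algebra on the set $a$. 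Its ultrafilters are then the principal ones (type $1$, reabsorbed into the principal case) and the cofinite one on $a$, which has type $[|a|]^{<\om}$. But $|a| = |\{ a \cap -b : b \in \mathcal{A} \setminus \{a\}\}|$ up to a technicality about whether that family of "pieces" genuinely exhausts $a$; cleaning this up is where the stated cardinality condition enters. So the nonprincipal ultrafilter containing $a$ but no proper infinite subset generated inside $\bB$ has Tukey type $[\mu]^{<\om}$ where $\mu = |\{ a \cap -b : b \in \mathcal{A}\setminus\{a\}\}|$, and this is exactly realized precisely when such $\mu$ arises from some $a$.

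**The delicate point.** The main obstacle is the careful bookkeeping in the restricted algebra $\bB \re a$: I must verify that an ultrafilter on $\bB$ containing $a$ is determined, up to Tukey equivalence, by a \emph{single} coinitial chain of "large" subsets of $a$, and that the length/width of that chain is governed by the family $\{a \cap -b : b \in \mathcal{A} \setminus \{a\}\}$ rather than by something larger or smaller. Concretely, I need: (i) no such ultrafilter can split $a$ into $\mu$ many independent pieces simultaneously belonging to it (ruling out types above $[\mu]^{<\om}$), which follows from the almost-disjointness forcing the relative algebra to be "thin"; and (ii) when $\{a \cap -b : b\}$ has size $\mu$, one can actually \emph{build} an ultrafilter for which a subfamily of these pieces of size $\mu$ is unbounded, invoking Fact \ref{facts.useful}(2) — this requires choosing $\U$ to contain $a$ and, for each $b$, either $b$ or $-b$ so that the surviving pieces $a \cap -b$ form an unbounded subset of $\U$. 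Checking that such a coherent choice exists (a finite-intersection-property argument as in Theorem \ref{thm.indepimpliestop}) is the technical heart. The converse direction — that no ultrafilter realizes $[\mu]^{<\om}$ when no $a$ has $\mu$ such pieces — should follow by showing any nonprincipal $\U \ni a$ has Tukey type exactly $[|a|]^{<\om}$ with $|a|$ expressible via that family, while the "co-everything" ultrafilter only gives $[\kappa]^{<\om}$.
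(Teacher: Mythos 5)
Your overall case split (principal; nonprincipal containing some $a\in\mathcal{A}$; nonprincipal containing $-a$ for every $a$) matches the paper's trichotomy, and your treatment of the principal and ``co-everything'' cases is essentially the paper's (the paper makes the latter precise via the monotone cofinal Tukey map $F\mapsto\bigcap_{\al\in F}-a_\al$ from $[\kappa]^{<\om}$). The genuine gap is in the crucial case $\U\ni a$. The relative algebra $\bB\re a$ is \emph{not} (even essentially) the finite--cofinite algebra on the set $a$: it is generated by the traces $\{a\cap b: b\in\mathcal{A}\setminus\{a\}\}$, which are particular finite subsets of $a$, and in general it is a much smaller subalgebra. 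Consequently your identification of the nonprincipal ultrafilter containing $a$ with ``the cofinite ultrafilter on $a$'' of type $[\,\abs{a}\,]^{<\om}$, together with the claim that $\abs{a}=\abs{\{a\cap -b: b\in\mathcal{A}\setminus\{a\}\}}$ ``up to a technicality about exhausting $a$,'' is false. For example, if $a$ is uncountable but every $b\in\mathcal{A}\setminus\{a\}$ meets $a$ in one fixed finite set (or misses $a$ entirely), there are at most two distinct pieces $a\cap-b$, and the ultrafilter containing $a$ is principal or of countable character --- nowhere near $[\,\abs{a}\,]^{<\om}$; worse, $\abs{a}$ can exceed $\kappa=\abs{\bB}$ (take $\lambda>\kappa$), so type $[\,\abs{a}\,]^{<\om}$ is outright impossible. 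The discrepancy between $\abs{a}$ and the number of distinct pieces is not a technicality: it is exactly the point of the cardinality condition in the statement.

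What the paper does instead, and what your sketch is missing, is to work directly with the distinct pieces: choose a maximal $K\sse\kappa$ such that the elements $a\cap-a_\al$ ($\al\in K$) are pairwise distinct, set $\mu=\abs{K}$, and check that $g\colon[K]^{<\om}\to\U$, $g(F)=a\cap\bigcap_{\al\in F}-a_\al$, is monotone with cofinal range (its range is a base of $\U$) and Tukey (a lower bound for $g[\mathcal{X}]$ forces $\bigcup\mathcal{X}$ finite, using almost disjointness), giving $\U\equiv_T[\mu]^{<\om}$ exactly. Also note that the ``technical heart'' you locate --- a finite-intersection-property construction choosing $b$ or $-b$ for each $b$ --- is not where the work is: once $\U$ is nonprincipal and contains $a$, almost disjointness forces $-b\in\U$ for every $b\in\mathcal{A}\setminus\{a\}$ automatically (since $a\cap b$ is finite and a nonprincipal ultrafilter on this algebra contains no finite sets), so there is no choice to be made; the real work is the Tukey computation with the correct index set $K$ described above.
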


\begin{proof}
Let $\mathcal{A}$ be an almost disjoint family on an infinite cardinal $\lambda$.
Let $\bB$ denote the subalgebra of $\mathcal{P}(\lambda)$ generated by $\mathcal{A}$, and let $\kappa=|\bB|$. 
The ultrafilters on $\bB$ are of three possible forms, each of which is realized:
 principal,  generated by $\{-a:a\in\mathcal{A}\}$, or generated by $\{a\cap - b: b\in\mathcal{A}\setminus \{a\}\}$ for some $a\in\mathcal{A}$.
The principal ultrafilters have Tukey type $1$.

Suppose 
 $\mathcal{U}$ is  the ultrafilter generated by the set $\{-a:a\in\mathcal{A}\}$.
In this case, index the members of $\mathcal{A}$ so that $\mathcal{A}=\{a_{\al}:\al<\kappa\}$.
This ultrafilter is nonprincipal.
As above,   the map $g \colon [\kappa]^{<\om}\ra\mathcal{U}$, given by $g(F)=\bigcap_{\al\in F}-a_{\al}$,  is a monotone cofinal Tukey map.
Therefore, $(\mathcal{U},\ge)\equiv_T([\kappa]^{<\om},\sse)$, since $|\bB|=\kappa$.

For the third type of ultrafilter, fix any $a\in\mathcal{A}$ and enumerate $\mathcal{A}\setminus \{a\}$ as $\{a_{\al}:\al<\kappa\}$, and let $\mathcal{U}$ be the ultrafilter generated by $\{a\}\cup\{-a_{\al}:\al<\kappa\}$.
Let $K\sse\kappa$ be a maximal subset of $\kappa$ such that for all $\al\ne\beta$ in $K$,
$a\cap -a_{\al}\ne a\cap -a_{\beta}$, and let $\mu=|K|$. 
Without loss of generality, 
suppose $\U$ is nonprincipal, and hence $\mu\ge \om$.
Define a map $g \colon [K]^{<\om}\ra \mathcal{U}$ by
$g(F)=a\cap(\bigcap_{\al\in F}-a_{\al})$, for each $F\in[K]^{<\om}$.
We claim that $g$ is a monotone cofinal and Tukey map.
By definition, it is clear that $g$ is monotone. Since its range is all of $\mathcal{U}$, it is a cofinal map.
To check that $g$ is Tukey,
let $\mathcal{X}\sse [K]^{<\om}$ and suppose that 
there is  a bound $b\in\mathcal{U}$ for $\{g(F):F\in\mathcal{X}\}$.
Then  $b\sse a\cap(\bigcap_{\al\in F}-a_{\al})$, for each $F\in\mathcal{X}$.
Letting  $G=\bigcup \mathcal{X}$, we see that
$b\sse a\cap (\bigcap_{\al\in G}-a_{\al})$.
Thus, $G$ must be finite; hence also $\mathcal{X}$ is finite and therefore bounded.
Therefore, $\mathcal{U}\equiv_T[\mu]^{<\om}$.
\end{proof}

\begin{question}
What are the Tukey spectra of superatomic Boolean algebras in general?
\end{question}

Lastly, we state a theorem that will be proved in Section 4: A tree algebra $\treealg T$ of size $\kappa$ has an ultrafilter with maximum Tukey type if and only if the underlying tree $T$ has an initial chain with $\kappa$-many immediate successors. (This is Corollary \ref{U top iff kappa succs of C}; we will also prove in Corollary \ref{p.when pseudo-tree algebras have top type ult} a more general version for pseudo-tree algebras.)

\section{Spectra of Tukey types of interval algebras, tree algebras, and pseudo-tree algebras}\label{sec.IntAlg}

Basic facts about interval algebras and tree algebras can be found in Volume 1 of the Handbook of Boolean Algebras \cite{KoppelbergHB}; basic facts about pseudo-tree algebras which generalize from corresponding tree algebra facts can be found in \cite{Koppelberg/Monk92}. Since pseudo-trees are probably the least well-known of these classes, we provide some background on them here. We follow the notation in \cite{Koppelberg/Monk92}. 

A \textit{pseudo-tree} is a partially ordered set $(T, \leq)$ such that for each $t \in T$, the set $T \downarrow t = \{s \in T: s \leq t\}$ is linearly ordered. The pseudo-tree algebra on a pseudo-tree $T$ is generated in the same way as a tree algebra: $\treealg(T)$ is the algebra of sets generated by the ``cones'' $T \uparrow t=\{s \in T: s \geq t\}$, for $t \in T$. A pseudo-tree algebra is thus a generalization of both an interval algebra and a tree algebra, and the following discussion of the correspondence between ultrafilters and initial chains applies to all three classes of Boolean algebras.

Let $T$ be an infinite pseudo-tree with a single root. (It does no harm to assume that all of our pseudo-trees have single roots; any pseudo-tree algebra is isomorphic to a pseudo-tree algebra on a pseudo-tree with a single root (see \cite{Koppelberg/Monk92}).) An initial chain in $T$ is a non-empty chain $C \subseteq T$ such that if $c \in C$ and $t < c$ then $t \in C$. There is a one-to-one correspondence between ultrafilters $\U$ on $\treealg T$ and initial chains in $T$, given by 
\[\phi(\U)=\{t \in T: T \cn t \in \U\}.\] 
The inverse of this map shows how ultrafilters are generated by sets defined in terms of their corresponding initial chains: for $C$ an initial chain in $T$, 
$\phi^{-1}(C)=\langle H_C \rangle,$ 
where \[H_C=\{(T \cn t) \setminus \bigcup_{s \in S}(T \cn s): t \in C,\ S \textrm{ is a finite antichain of elements }s > C\}\] (where $S$ is allowed to be empty).
This set of generators is closed under finite intersection, and 
$(\U, \supseteq) \equiv_T (H_C, \supseteq)$.

Let $C \subseteq T$ be an initial chain. Call a subset $R \subseteq T$ a \textit{set of approximate immediate successors of} $C$ if

(i) $r>C$ for all $r \in R$, and 

(ii) for all $s > C$, there is an $r \in R$ such that $C<r\leq s$.

\n Then define \[\varepsilon_C=\min\{\abs{R}: R \textrm{ is a set of approximate immediate successors of } C\}.\]

The character $\chi$ of an ultrafilter $\U$ on $\treealg T$ is the minimum size of a generating set for $\U$. If $C$ is the initial chain corresponding to $\U$, then $\chi(\U)=\max\{\varepsilon_C, \cf C\}$ (see \cite{Brown15}). If $T$ is a tree, then the set $\imm(C)$ of immediate successors of an initial chain $C$ in $T$ is well-defined, even if $C$ does not have a top element, and $\varepsilon_C=\abs{\imm(C)}$. 

We describe all possible Tukey types of ultrafilters on interval algebras in Theorem \ref{interval algebras}. 
From this, we classify the spectra of Tukey types of all interval algebras. In the terminology of Milovich \cite{Milovich12}, ultrafilters on interval algebras have cofinally rectangular Tukey types. We assume that all linear orders mentioned have least elements. (Where a linear order does not naturally have a least element, we add an element $-\infty$ to $L$ and proceed to build $\intalg L$ as in \cite{KoppelbergHB}.)

For $L$ a linear order and $X \subseteq L$, the coinitiality of $X$, denoted $\ci(X)$, is the least cardinal $\mu$ such that $\mu^*$ is coinitial in $X$.

\begin{thm}\label{interval algebras}
Let $L$ be a linear ordering.
Let $P$ denote the set of pairs of regular cardinals $(\kappa,\mu)$ for which there is an initial chain $C$ in $L$  such that the cofinality of $C$ is $\kappa$ and the coinitiality of $L\setminus C$ is $\mu$.
Then the Tukey spectrum of Intalg $L$ is exactly 
$\{(\kappa\times\mu,\le):(\kappa,\mu)\in P\}$.
\end{thm}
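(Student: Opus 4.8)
The plan is to exploit the correspondence, recalled above, between ultrafilters $\U$ on $\intalg L=\treealg L$ and initial chains $C=\phi(\U)$ in $L$, together with the relation $(\U,\supseteq)\equiv_T(H_C,\supseteq)$. Since $L$ is itself a chain, every finite antichain of elements above $C$ is empty or a singleton, so $H_C$ consists exactly of the tails $L\cn t$ for $t\in C$ together with the half-open intervals $[t,s):=(L\cn t)\setminus(L\cn s)$ for $t\in C$ and $s\in L\setminus C$. Assume first that $L\setminus C\neq\emptyset$, put $\kappa=\cf C$ and $\mu=\ci(L\setminus C)$ (each of which is $1$ or a regular infinite cardinal), and fix a strictly increasing cofinal sequence $\langle c_\alpha:\alpha<\kappa\rangle$ in $C$ and a strictly decreasing coinitial sequence $\langle d_\beta:\beta<\mu\rangle$ in $L\setminus C$ (in the degenerate subcases these are just $\langle\max C\rangle$ and/or $\langle\min(L\setminus C)\rangle$). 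I would then check that $B_C:=\{[c_\alpha,d_\beta):\alpha<\kappa,\ \beta<\mu\}$ is cofinal in $(\U,\supseteq)$: any $u\in\U$ contains a finite intersection of members of $H_C$, which is again a member of $H_C$ (the generating set is closed under finite intersection), i.e. some $L\cn t$ or $[t,s)$, and such a set contains $[c_\alpha,d_\beta)$ once $c_\alpha\ge t$ and $d_\beta\le s$ (with $s$ first chosen in $L\setminus C$ in the tail case). So $B_C$ is a cofinal subset of, i.e.\ a filter base for, $\U$, and by Fact \ref{facts.useful}(6), $(\U,\supseteq)\equiv_T(B_C,\supseteq)$. (If instead $C=L$, the analogous argument with $\{L\cn c_\alpha:\alpha<\kappa\}$ gives $(\U,\supseteq)\equiv_T(\kappa,\le)$; reading $\ci(\emptyset)$ as $1$ makes this the instance $(\kappa\times 1,\le)$ of the claimed formula.)

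The core step is to identify $(B_C,\supseteq)$ with $(\kappa\times\mu,\le)$. I would show that $h\colon(\kappa\times\mu,\le)\to(B_C,\supseteq)$, $h(\alpha,\beta)=[c_\alpha,d_\beta)$, is an order-isomorphism. It is onto by definition and order-preserving because $\alpha\le\alpha'$ and $\beta\le\beta'$ give $c_\alpha\le c_{\alpha'}$ and $d_{\beta'}\le d_\beta$, hence $[c_{\alpha'},d_{\beta'})\subseteq[c_\alpha,d_\beta)$. For order-reflection, suppose $[c_\alpha,d_\beta)\supseteq[c_{\alpha'},d_{\beta'})$: since $c_{\alpha'}$ lies in the smaller interval it lies in the larger one, forcing $c_\alpha\le c_{\alpha'}$ and so $\alpha\le\alpha'$; and if one had $\beta>\beta'$ then $d_\beta$, being an element of $L\setminus C$ (hence above every element of $C$, in particular above $c_{\alpha'}$) and strictly below $d_{\beta'}$, would lie in $[c_{\alpha'},d_{\beta'})$ but not in $[c_\alpha,d_\beta)$, a contradiction, so $\beta\le\beta'$. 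Since isomorphic partial orders have the same Tukey type (Fact \ref{fact.mono}(2)), it follows that $(\U,\supseteq)\equiv_T(\kappa\times\mu,\le)$ for the ultrafilter $\U$ with $\phi(\U)=C$.

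It then remains to assemble the spectrum. Every ultrafilter on $\intalg L$ corresponds to some initial chain $C$ and so, by the above, has Tukey type $(\cf C\times\ci(L\setminus C),\le)$, a pair lying in $P$ (with witness $C$); conversely, for each $(\kappa,\mu)\in P$ there is by definition an initial chain $C$ with $\cf C=\kappa$ and $\ci(L\setminus C)=\mu$, and then $\phi^{-1}(C)$ has Tukey type $(\kappa\times\mu,\le)$. Hence $\mathrm{Ts}(\intalg L)=\{(\kappa\times\mu,\le):(\kappa,\mu)\in P\}$. I expect the only genuinely delicate points to be the reduction of $H_C$ to the interval form using the chain structure, the verification that $h$ reflects the ordering in the $d_\beta$-coordinate (the step that really uses $d_\beta>C$), and keeping the degenerate cases straight — $C$ with a maximum (forcing $\kappa=1$) and $L\setminus C$ empty or with a minimum (forcing, or conventionally setting, $\mu=1$) — in which the product $\kappa\times\mu$ collapses as it should.
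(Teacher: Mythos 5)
Your proposal is correct and follows essentially the same route as the paper's proof: both identify the ultrafilter with its filter base of intervals $[c_\alpha, l_\beta)$ built from a cofinal sequence in $C$ and a coinitial sequence in $L\setminus C$, and then identify that base with $(\kappa\times\mu,\le)$. You simply spell out the details the paper leaves to ``one can check'' (the order-isomorphism, the reduction of $H_C$ to intervals, and the degenerate cases such as $C=L$ or $\cf C=1$), which is fine.
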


\begin{proof}
Let $L$ be a linear order with a first element and set $A=\intalg L$. Since $A$ is also a pseudo-tree algebra, its ultrafilters are associated with initial chains as are those of pseudo-tree algebras.
 Let $\U \in \ult A$ and let $C$ be the initial chain associated with $\U$. 
Let $\kappa =\cf C$ and  $\{c_\alpha:\alpha < \kappa\}$ be an increasing cofinal sequence in $C$.
Let
$\mu=\ci(L\setminus C)$ and let 
 $\{l_\beta:\beta < \mu\}$ be a decreasing coinitial sequence in $L \setminus C$. 
Letting 
\[G=\{[c_\alpha, l_\beta): \alpha < \kappa, \beta < \mu\},\] 
we see that 
 $\U=\langle G \rangle$, and hence $(\U, \supseteq) \equiv_T 
(G, \supseteq)$.
Define $f \colon (G, {\supseteq}) \to (\kappa \times \mu)$ by $f([c_\alpha, l_\beta))=(\alpha, \beta)$. One can check that $f$ is an unbounded cofinal map, so that $(G, \supseteq) \equiv_T (\kappa\times\mu,\le)$. 
\end{proof}

Thus if $L$ is a linear order and $\U$ is an ultrafilter on $\intalg L$, then there are only three possibilities for the Tukey type of $(\U, \supseteq)$. Letting $C$ be the initial chain corresponding to $\U$, either  
\begin{enumerate}
\item $(\U, \supseteq) \equiv_T (\kappa, \leq)$ where $\cf C = \kappa$, or
\item $(\U, \supseteq) \equiv_T (\mu, \leq)$ where   $\ci (L \setminus C) = \mu$, or
\item $(\U, \supseteq) \equiv_T (\kappa \times \mu, \leq)$ where $\cf C = \kappa$ and $\ci (L \setminus C) = \mu$.
\end{enumerate}

(Recall from Fact \ref{facts.useful} (5) that if $\cf C = \kappa = \ci (L \setminus C)$ then $(\U, \supseteq) \equiv_T (\kappa \times \kappa, {\leq}) \equiv_T (\kappa, \leq)$.)

Observe that Fact \ref{facts.useful} (4)  then implies that no uncountable interval algebra $\intalg L$ has an ultrafilter of maximal Tukey type $([\abs{L}]^{<\omega}, \subseteq)$. Countably infinite linear orders always have ultrafilters of top Tukey type:\ let $L$ be a countably infinite linear order. Then $L$ contains a initial chain $C$ with $\cf C = \omega$ or $\ci (L \setminus C) = \omega$; and in any case, if $\U$ is the ultrafilter corresponding to $C$, $(\U, \supseteq) \equiv_T (\omega, \leq) \equiv_T ([\omega]^{<\omega}, \subseteq)$.

Next, we show that  the Tukey spectra of  interval algebras is quite robust.

\begin{fact}\label{f.attainable types for interval algebras}
Given any collection  $P$ of pairs of regular cardinals, each of which is either $1$ or else is infinite, there is a linear order $L$ whose Tukey spectrum
 includes $\{(\kappa\times\mu,\le):(\kappa,\mu)\in P\}$.
\end{fact}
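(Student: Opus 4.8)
The plan is to reduce the statement directly to Theorem \ref{interval algebras}. That result identifies the Tukey spectrum of $\intalg L$ with $\{(\kappa\times\mu,\le):(\kappa,\mu)\in Q_L\}$, where $Q_L$ is the set of pairs of regular cardinals $(\kappa,\mu)$ such that some initial chain $C$ of $L$ satisfies $\cf C=\kappa$ and $\ci(L\setminus C)=\mu$. So it suffices to build one linear order $L$ with a least element for which $P\subseteq Q_L$; we only need containment of Tukey spectra, so it does no harm if other initial chains of $L$ contribute extra Tukey types.

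The order $L$ I have in mind is a simple ordered sum of blocks. Assuming $P\neq\emptyset$ (otherwise take $L$ to be, say, a two-element chain), fix a linear order $I$ and a bijective indexing $P=\{(\kappa_i,\mu_i):i\in I\}$, and for each $i\in I$ let $L_i=A_i+B_i$, where $A_i$ is a copy of the ordinal $\kappa_i$ and $B_i$ a copy of the reversed ordinal $\mu_i^*$; put $L=\sum_{i\in I}L_i$. Since each $\kappa_i\ge 1$, every $A_i$ and hence $L$ has a least element, so $\intalg L$ is formed in the usual way. Now fix $j\in I$ and let
\[C_j=\Big(\bigcup_{i<j}L_i\Big)\cup A_j,\]
a nonempty downward-closed subset of the linearly ordered set $L$, hence an initial chain. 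First I would check that $A_j$ is cofinal in $C_j$: every element of a block $L_i$ with $i<j$ lies below all of $A_j$ in the ordered sum, and $A_j\neq\emptyset$. Hence $\cf C_j=\cf A_j=\kappa_j$, using that $\kappa_j$ is regular (and reading $\cf 1=1$ in the degenerate case). Symmetrically, $L\setminus C_j=B_j\cup\bigcup_{i>j}L_i$ has $B_j$ as a coinitial subset, so $\ci(L\setminus C_j)=\ci(\mu_j^*)=\mu_j$, again by regularity of $\mu_j$ (and $\ci 1=1$). Therefore $(\kappa_j,\mu_j)\in Q_L$; as $j$ was arbitrary, $P\subseteq Q_L$, and Theorem \ref{interval algebras} yields that the Tukey spectrum of $\intalg L$ includes $\{(\kappa\times\mu,\le):(\kappa,\mu)\in P\}$.

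I do not anticipate a genuine obstacle: once Theorem \ref{interval algebras} is in hand, everything is bookkeeping about ordered sums. The points that deserve a careful line or two are the degenerate coordinates --- when $\kappa_j=1$ the block's lower part $A_j$ is a single point, so $C_j$ is an initial chain with a maximum and $\cf C_j=1$; when $\mu_j=1$ the upper part $B_j$ is a single point, contributing coinitiality $1$ to $L\setminus C_j$ --- together with the observation that the blocks preceding $L_j$ cannot disturb $\cf C_j$, precisely because in an ordered sum each element of an earlier block lies below all of $A_j$, keeping the $\kappa_j$-tail cofinal in $C_j$ (and dually for the coinitiality of $L\setminus C_j$).
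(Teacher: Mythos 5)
Your construction is essentially the paper's own: the paper enumerates $P$ as $\{(\kappa_\alpha,\mu_\alpha):\alpha<|P|\}$ and takes the lexicographic sum of blocks $X_\alpha{}^\frown Y_\alpha$ with $X_\alpha$ of type $\kappa_\alpha$ and $Y_\alpha$ of type $\mu_\alpha^*$, reading off the same initial chains $C_\alpha$ and applying Theorem \ref{interval algebras} exactly as you do. The only nit is that your $L$ has a least element only if the index order $I$ does, so either take $I$ well-ordered (as the paper does) or invoke the paper's convention of adjoining $-\infty$, which changes nothing about the cofinalities involved.
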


\begin{proof}
Let $\lambda=|P|$ and enumerate the pairs of regular cardinals in $P$ as $\{(\kappa_\alpha, \mu_\alpha): \alpha < \lambda\}$. 
For $\alpha < \lambda$, let $X_\alpha$ be a sequence of order type $\kappa_\alpha$, and let $Y_\alpha$ be a sequence of order type $\mu_\alpha^*$. Let
 $L$ be the linear order  
$$L=\bigcup_{\al<\lambda}\{\al\}\times
({X_{\al}}^{\frown}Y_{\al})$$
with the lexicographic ordering.
For $\alpha < \lambda$, let $C_\alpha$ denote the set of 
those elements of $L$ that are below every element of $Y_\alpha$. 
Then $\cf C_\alpha = \cf X_\alpha = \kappa_\alpha$ and $\ci(L \setminus C_\alpha)=\ci Y_\alpha = \mu_\alpha$. 
Letting $\U_\alpha$ be the ultrafilter corresponding to $C_\alpha$, it follows that $(\U_\alpha, \supseteq) \equiv_T (\kappa_\alpha \times \mu_\alpha, \leq)$. 
\end{proof}

We note that the Tukey spectrum of the interval algebra in the proof of Fact \ref{f.attainable types for interval algebras} may also contain types not among $\{(\kappa \times \mu, \leq): (\kappa, \mu) \in P\}$. For example, if $P=\{(\omega_1, \omega_1)\}$, then the construction gives us $L=\omega_1 + \omega_1^*$, and $\intalg(L)$ has the type $(\omega, \leq)$ in its Tukey spectrum.

Now we attend to the class of tree algebras. All products we mention in what follows will be of the following weak sort:\ for an index set $I$ and a collection of cardinals $\{\kappa_i:i \in I\}$,
let $\prod^{\mathrm{w}}_{i\in I}\kappa_i$ denote the collection of all functions $f\in\prod_{i\in I}\kappa_i$ such that for all but finitely many $i\in I$, $f(i)=0$; again the partial ordering is coordinate-wise. In the special case where the index set $I$ is a cardinal and each $\kappa_i$ is 2, we have the following fact.

\begin{fact}\label{f: top type equiv to a prod}
For any $\kappa$, $\prod^{\mathrm{w}}_{\alpha < \kappa}\{0,1\}\equiv_T \topkappa$.
\end{fact}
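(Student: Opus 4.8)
The plan is to exhibit explicit maps in both directions, using the weak product $\prod^{\mathrm{w}}_{\alpha<\kappa}\{0,1\}$ as essentially a renaming of $([\kappa]^{<\omega},\subseteq)$. Observe that an element $f\in\prod^{\mathrm{w}}_{\alpha<\kappa}\{0,1\}$ is determined by its support $\mathrm{supp}(f)=\{\alpha<\kappa:f(\alpha)=1\}$, and since $f$ has finite support (all but finitely many coordinates are $0$), the assignment $f\mapsto\mathrm{supp}(f)$ is a bijection from $\prod^{\mathrm{w}}_{\alpha<\kappa}\{0,1\}$ onto $[\kappa]^{<\omega}$. Moreover, with the coordinate-wise ordering, $f\le g$ if and only if for every $\alpha$, $f(\alpha)\le g(\alpha)$ in $\{0,1\}$, which holds if and only if $\mathrm{supp}(f)\subseteq\mathrm{supp}(g)$. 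Hence $f\mapsto\mathrm{supp}(f)$ is an order isomorphism between $(\prod^{\mathrm{w}}_{\alpha<\kappa}\{0,1\},\le)$ and $([\kappa]^{<\omega},\subseteq)$.

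Once this is established, the conclusion is immediate from Fact \ref{fact.mono}(2): isomorphic partial orders have the same Tukey type, so in particular $(\prod^{\mathrm{w}}_{\alpha<\kappa}\{0,1\},\le)\equiv_T([\kappa]^{<\omega},\subseteq)=\topkappa$. No actual Tukey maps need to be constructed by hand; the isomorphism does all the work.

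There is essentially no obstacle here — the only thing to be careful about is confirming that the weak-product restriction (finite support) is exactly what makes the target of the bijection be $[\kappa]^{<\omega}$ rather than all of $\mathcal{P}(\kappa)$, and that the coordinate-wise order on the two-element chains $\{0,1\}$ corresponds precisely to $\subseteq$ on finite subsets; both are routine. I would state the bijection, note that it is monotone in both directions, cite Fact \ref{fact.mono}(2), and be done in a couple of lines.
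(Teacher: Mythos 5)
Your proof is correct and uses essentially the same map as the paper: the paper defines the characteristic-function map $F\mapsto\langle e_\alpha:\alpha<\kappa\rangle$ and verifies directly that it is an unbounded cofinal map, while you observe that this same correspondence is in fact an order isomorphism and invoke Fact \ref{fact.mono}(2). Both arguments are fine; yours just packages the verification slightly more cleanly.
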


\begin{proof} For $F \in [\kappa]^{<\om}$, set $f(F)=\langle e_\alpha: \alpha < \kappa \rangle$ where 

\[ e_\alpha = \begin{cases}
1 \textrm{ if } \alpha \in F\\
0 \textrm{ if } \alpha \not\in F
\end{cases}
\] 

Then $f$ is an unbounded cofinal map.
\end{proof}

Proposition \ref{treealg of size om has top type ult} and Corollary \ref{U top iff kappa succs of C} characterize those trees $T$ for which  the algebra $\treealg T$ has an ultrafilter of maximal Tukey type $([\abs{T}]^{<\omega},\subseteq)$.

\begin{prop}\label{treealg of size om has top type ult}
If $T$ is a tree of size $\omega$ with a single root, then $\treealg T$ has an ultrafilter of type $([\om]^{<\om}, \subseteq)$.
\end{prop}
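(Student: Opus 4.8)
The plan is to use the correspondence recalled above between ultrafilters $\U$ on $\treealg T$ and initial chains $C$ in $T$, arguing in two halves. The first half is the observation that \emph{every} nonprincipal ultrafilter $\U$ on $\treealg T$ has Tukey type $([\om]^{<\om},\sse)$. Since $T$ has size $\om$, the algebra $\treealg T$ is countable (it is generated by the countably many pairwise distinct cones $T\cn t$), so $\U$ is a countable subset of it, and as it is not finitely generated, $\chi(\U)=\om$. Fixing a generating set $\{g_n:n<\om\}$ for $\U$ and putting $b_n=g_0\wedge\dots\wedge g_n$, the sequence $\langle b_n:n<\om\rangle$ is a $\supseteq$-decreasing filter base for $\U$; since $\U$ is not principal it is not eventually constant, so it has a strictly decreasing cofinal subsequence $\langle b_{n_k}:k<\om\rangle$, again a filter base for $\U$ and order-isomorphic to $(\om,\le)$. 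By Fact \ref{facts.useful}(6) and Fact \ref{fact.mono}(2) this gives $(\U,\supseteq)\equiv_T(\om,\le)$, which equals $([\om]^{<\om},\sse)$ by Fact \ref{facts.useful}(3); and this is the maximum possible type, by Fact \ref{facts.useful}(1), since $|\treealg T|=\om$.

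The second half is to exhibit a nonprincipal ultrafilter on $\treealg T$, i.e.\ (via the character formula $\chi(\U)=\max\{\varepsilon_C,\cf C\}$) an initial chain $C$ with $\varepsilon_C=\om$ or $\cf C=\om$. I would split into cases. If some $t\in T$ has infinitely many immediate successors, take $C=T\bl t$, an initial chain with a maximum; since $T$ is a tree, $\varepsilon_C=\abs{\imm(C)}=\abs{\imm(t)}=\om$. Otherwise $T$ is finitely branching, so, $T$ being infinite, a K\"onig's Lemma argument produces an infinite chain: the downward-closed set of nodes of $T$ with finitely many predecessors is a finitely branching tree all of whose nodes lie at finite levels, so if it is infinite it has an infinite branch by the classical K\"onig's Lemma, while if it is finite then some node of $T$ has infinitely many predecessors and its set of strict predecessors is an infinite chain. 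Closing such a chain downward gives an infinite initial chain $C_0$; if $C_0$ has a maximum, its order type is an infinite successor ordinal and the strict predecessors of that maximum form a shorter infinite initial chain, so iterating (the order types strictly decrease, hence this halts) yields an initial chain $C$ with no maximum, that is, of limit order type, and since $T$ is countable $\cf C=\om$. In either case $\treealg T$ has a nonprincipal ultrafilter, and the first half finishes the proof.

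In the infinitely-branching case the equivalence can also be made explicit, previewing the proof of the general statement Corollary \ref{U top iff kappa succs of C}: with $\imm(t)=\{u_n:n<\om\}$, the family $G=\{(T\cn t)\setminus\bigcup_{n\in F}(T\cn u_n):F\in[\om]^{<\om}\}$ is a filter base for the ultrafilter associated with $T\bl t$ (each member of $H_{T\bl t}$ contains a member of $G$, since every $s>T\bl t$ lies above exactly one element of $\imm(t)$), and the map $F\mapsto(T\cn t)\setminus\bigcup_{n\in F}(T\cn u_n)$ — equivalently, via Fact \ref{f: top type equiv to a prod}, a map out of $\prod^{\mathrm{w}}_{n<\om}\{0,1\}$ — is monotone, has cofinal range, and is unbounded (a $\supseteq$-upper bound in $G$ of the image of an unbounded $\mathcal{X}\sse[\om]^{<\om}$ would be disjoint from $T\cn u_n$ for every $n\in\bigcup\mathcal{X}$, hence for infinitely many $n$, whereas each member of $G$ contains $u_m$ for all but finitely many $m$); so $(\U,\supseteq)\equiv_T(G,\supseteq)\equiv_T([\om]^{<\om},\sse)$ by Fact \ref{facts.useful}(6) and Fact \ref{fact.mono}(1).

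I expect the main obstacle to be the K\"onig's Lemma step in the finitely branching case: because a countable tree may have nodes of infinite height, its levels need not all be finite, which is why one first passes to the subtree of finite-height nodes (and separately handles the degenerate possibility that this subtree is finite). Everything else is routine manipulation of the generating sets $H_C$ together with the facts collected in Section \ref{sec.intro}; indeed a shortcut avoiding the case analysis is simply to quote that every infinite Boolean algebra has a nonprincipal ultrafilter and then apply the first half.
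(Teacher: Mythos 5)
Your proof is correct, but it is organized differently from the paper's. The paper argues chain-by-chain: it splits into the same two cases you use in your second half (a node with infinitely many immediate successors, versus a finitely branching tree plus a K\"onig's Lemma argument), and in each case exhibits an explicit decreasing $\omega$-chain of generators cofinal in the corresponding ultrafilter, concluding $(\U,\supseteq)\equiv_T(\om,\le)\equiv_T([\om]^{<\om},\sse)$. You instead front-load a general lemma: on a countable Boolean algebra \emph{every} nonprincipal ultrafilter has a strictly decreasing cofinal $\omega$-chain (finite meets of an enumeration of a generating set), hence is Tukey-top by Facts \ref{facts.useful}(6), \ref{fact.mono}(2) and \ref{facts.useful}(3); the tree combinatorics is then only needed to produce some nonprincipal ultrafilter, and as you note even that can be bypassed by quoting that every infinite Boolean algebra has one. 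Your route buys more: it shows all nonprincipal ultrafilters on $\treealg T$ (indeed on any countable algebra) have the maximum type, and it also patches a small imprecision in the paper's K\"onig step --- the paper asserts that finite branching makes $T$ an $\om$-tree, which silently assumes all nodes sit at finite levels, whereas you observe that a node at an infinite level already supplies the needed infinite chain. The paper's version, in exchange, stays closer to the tree structure and previews the explicit generator-counting maps used later for Theorem \ref{tree algebras}, much as your optional third paragraph does. Both arguments are sound; only minor stylistic quibbles apply to yours (e.g.\ the remark $\chi(\U)=\om$ in the first half is not needed, and the maximum-removal iteration could be replaced by simply taking the first $\om$ elements of the infinite chain).
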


\begin{proof}
Let $T$ be a tree of size $\om$ with a single root. Then also $\abs{\treealg T}=\om$.
If there is  some $z \in T$ with $\omega$-many immediate successors $\{s_i:i<\om\}$,
then set $C=T \bl z$ and let $\U=\phi^{-1}(C)$ be the corresponding ultrafilter. Define $a_n \in \treealg T$, for $n < \om$, by $a_n=(T \cn z) \setminus \bigcup_{i<n} (T \cn s_i)$. Then the $a_n$ form a chain of type $\om$ in $(\U, \supseteq )$, so that $(\U, \supseteq) \equiv_T (\om, \leq) \equiv_T ([\omega]^{<\omega}, \subseteq)$.

Otherwise, no $z \in T$ has $\omega$-many immediate successors. Since $T$ has a single root, this means that for all $n<\omega$, $\abs{\operatorname{Lev}_n(T)}<\om$; that is, $T$ is an $\om$-tree. By K\"{o}nig's Lemma, $T$ has an infinite chain $\{c_n:n<\om\}$. Let $C$ be minimal among initial chains containing $\{c_n:n<\om\}$. Let $\U=\phi^{-1}(C)$ be the ultrafilter corresponding to $C$. For each $n<\om$, set $a_n=T \cn c_n$. Then the $a_n$ form a chain of type $\om$ in $( \U, \supseteq)$, so that $(\U, \supseteq) \geq_T (\om, \leq)$. Since $(\om, \leq)$ has the maximum Tukey type for $\om$, $(\U, \supseteq) \equiv_T ( [\omega]^{<\omega}, \subseteq )$.
\end{proof}

\begin{thm}\label{tree algebras}
Let $T$ be a tree, let $\U$ be an ultrafilter on $T$, and let $C$ be the initial chain corresponding to $\U$. Let $\{c_\alpha: \alpha < \cf C\}$ be an increasing cofinal sequence in $C$, and let $\{s_\beta: \beta < \mu\}$ be the set of immediate successors of $C$ in $T$. Then $(\U, \supseteq) \equiv_T (\cf C \times \prod^\mathrm{w}_{\beta < \mu}\{0,1\}, \leq)$.
\end{thm}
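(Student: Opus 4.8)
The plan is to exhibit an explicit filter base for $\U$ indexed by $\cf C \times \prod^{\mathrm w}_{\beta<\mu}\{0,1\}$ and a monotone cofinal map onto that index set that is also Tukey. By the remarks preceding the theorem, $(\U,\supseteq)\equiv_T(H_C,\supseteq)$, where $H_C$ consists of sets of the form $(T\cn t)\setminus\bigcup_{s\in S}(T\cn s)$ for $t\in C$ and $S$ a finite antichain of elements $s>C$. Since $T$ is a tree, every $s>C$ lies above a unique immediate successor $s_\beta$ of $C$, and $(T\cn s_\beta)\supseteq(T\cn s)$; so replacing each $s\in S$ by the immediate successor below it only shrinks the generator further, and therefore the smaller family
\[
G=\Big\{(T\cn c_\alpha)\setminus\bigcup_{\beta\in E}(T\cn s_\beta):\alpha<\cf C,\ E\in[\mu]^{<\om}\Big\}
\]
is still a filter base for $\U$. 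Hence $(\U,\supseteq)\equiv_T(G,\supseteq)$ by Fact \ref{facts.useful}(6).

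Next I would define $g\colon (G,\supseteq)\to(\cf C\times\prod^{\mathrm w}_{\beta<\mu}\{0,1\},\le)$ by sending the generator indexed by $(\alpha,E)$ to the pair $(\alpha,\chi_E)$, where $\chi_E$ is the characteristic function of the finite set $E$. To see this is well-defined and monotone, note that if one generator contains another then the containing one has a smaller (or equal) $\alpha$ and a subset $E$; so $g$ respects the orderings. Its range is all of $\cf C\times\prod^{\mathrm w}_{\beta<\mu}\{0,1\}$, so $g$ is cofinal. By Fact \ref{fact.mono}(1) this already gives $(\U,\supseteq)\ge_T(\cf C\times\prod^{\mathrm w}_{\beta<\mu}\{0,1\},\le)$. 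For the reverse, I would check that $g$ is Tukey: suppose $\mathcal X\subseteq G$ has $g$-image bounded by $(\gamma,\chi_D)$ for some $\gamma<\cf C$, $D\in[\mu]^{<\om}$. Then every generator in $\mathcal X$ uses some $c_\alpha$ with $\alpha\le\gamma$ and some finite $E\subseteq D$; there are only finitely many such pairs $(\alpha,E)$ once we observe that the $c_\alpha$'s below $c_\gamma$... — this needs care when $\gamma$ is a limit, so instead I would argue directly that a common lower bound for $\mathcal X$ in $G$ exists: take $c_\gamma$ (or $c_{\gamma+1}$ if $\gamma\in C$ has a successor inside $C$) together with $\bigcup_{\text{used }E}E\subseteq D$, a finite set, giving an element of $G$ below every member of $\mathcal X$. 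Hence bounded sets map to bounded sets, so $g$ is Tukey and $(\U,\supseteq)\le_T(\cf C\times\prod^{\mathrm w}_{\beta<\mu}\{0,1\},\le)$.

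Combining the two reductions yields the Tukey equivalence. I expect the main obstacle to be the bookkeeping in verifying that $G$ really is a filter base for $\U$ — specifically, checking that finite intersections of elements of $G$ lie (up to the filter) back in $G$, and that every element of $H_C$ contains some element of $G$. The key point making this work is that $T$ is a genuine tree: immediate successors of $C$ exist and are pairwise incomparable, so an arbitrary finite antichain $S$ above $C$ can be coarsened to a finite set $E\subseteq\mu$ of immediate-successor indices, and distinct indices $\beta$ give disjoint cones $T\cn s_\beta$, which keeps the intersection calculus clean. One should also note the degenerate case where $C$ has a maximum with no successors in $T$ (so $\mu=0$), in which case $\prod^{\mathrm w}_{\beta<0}\{0,1\}$ is trivial and the statement reduces to $(\U,\supseteq)\equiv_T(\cf C,\le)$, which is immediate; and the case $\cf C=1$ (i.e.\ $C$ has a top element), handled symmetrically, where the result reduces via Fact \ref{f: top type equiv to a prod} to the pure immediate-successor part.
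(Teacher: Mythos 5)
Your proposal is correct and takes essentially the same route as the paper: the same generating family $G$ of sets $(T\cn c_\alpha)\setminus\bigcup_{\beta\in E}(T\cn s_\beta)$ and the same map into $(\cf C\times\prod^{\mathrm{w}}_{\beta<\mu}\{0,1\},\le)$, verified to be both cofinal and Tukey (your ``image bounded implies the set is bounded'' argument is precisely the contrapositive of the paper's unboundedness argument, and you additionally spell out why $G$ is a filter base, which the paper asserts). One wording slip: ``bounded sets map to bounded sets'' is not the Tukey property; the property is that sets with bounded image are bounded, which is what your argument in fact establishes.
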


\begin{proof}
$\U$ is generated by $G=\{(T \cn c_\alpha) \setminus \bigcup_{\beta \in F}(T \cn s_\beta): \alpha < \cf C, F \in [\mu]^{<\omega}\}$. Define $f \colon (G, \supseteq) \to (\cf C \times \prod^\mathrm{w}_{\beta < \mu}\{0,1\}, \leq)$ by $f((T \cn c_\alpha) \setminus \bigcup_{\beta \in F}(T \cn s_\beta))=\langle \alpha \rangle^{\smallfrown} \langle e_\beta: \beta < \mu\rangle$ where 
\[ e_\beta = \begin{cases}
1 \textrm{ if } \beta \in F\\
0 \textrm{ if } \beta \not\in F.
\end{cases}
\]
We claim that $f$ is an unbounded cofinal map. Let $X \subseteq G$ be unbounded. Then either (i) the set \[\{\alpha < \cf C: (T \cn c_\alpha) \setminus \bigcup_{\beta \in F}(T \cn s_\beta) \in X \textrm{ for some } F \in [\mu]^{<\omega}\}\] is unbounded in $\cf C$, or (ii) the set 
 \[\{\gamma < \mu: (T \cn c_\alpha) \setminus \bigcup_{\beta \in F}(T \cn s_\beta) \in X \textrm{ for some } \alpha < \cf C, F \in [\mu]^{<\omega} \textrm{ with } \gamma \in F\}\]
 is infinite. In case (i), the set of first coordinates of elements of $f[X]$ is unbounded in $\cf C$. In case (ii), there are infinitely many $\beta < \mu$ at which some element of $f[X]$ has the value 1. In either case, $f[X]$ is unbounded in $(\cf C \times \prod^\mathrm{w}_{\beta < \mu}\{0,1\}, \leq)$.
 
 Now suppose $X \subseteq G$ is a cofinal subset. Let $p=\langle \alpha, e_0, e_1, \dots, e_\beta, \dots \rangle \in (\cf C \times \prod^\mathrm{w}_{\beta < \mu}\{0,1\}, \leq)$. Set $F=\{\beta < \mu: e_\beta = 1\}$. As $X$ is cofinal in $G$, there is some $x \in X$ such that $x \subseteq (T \cn c_\alpha) \setminus \bigcup_{\beta \in F}(T \cn s_\beta)$. Then $f(x) \geq p$. Thus $f[X]$ is cofinal in $(\cf C \times \prod^\mathrm{w}_{\beta < \mu}\{0,1\}, \leq)$.
 
 Then since $G$ is closed under finite intersection and generates $\U$, \[(\U, \supseteq) \equiv_T (G, \supseteq) \equiv_T (\cf C \times \prod^\mathrm{w}_{\beta < \mu}\{0,1\}, \leq).\]
\end{proof}

From Theorem \ref{tree algebras} and Fact \ref{f: top type equiv to a prod}, we have the following corollary:

\begin{cor}\label{U top iff kappa succs of C}
Let $\kappa > \om$ and let $T$ be a tree of size $\kappa$. Then $\treealg T$ has an ultrafilter with type $\topkappa$ if and only if $T$ has an initial chain with $\kappa$-many immediate successors.
\end{cor}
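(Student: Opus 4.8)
The plan is to derive this corollary directly from Theorem \ref{tree algebras} together with Fact \ref{f: top type equiv to a prod}, since the work of computing Tukey types of ultrafilters on tree algebras has already been done. Let $\kappa>\omega$ and let $T$ be a tree of size $\kappa$ with (without loss of generality) a single root. Fix an ultrafilter $\U$ on $\treealg T$ and let $C$ be its associated initial chain, with $\cf C$ the cofinality of $C$ and $\mu=\abs{\imm(C)}$ the number of immediate successors of $C$ in $T$. By Theorem \ref{tree algebras}, $(\U,\supseteq)\equiv_T(\cf C\times\prod^{\mathrm w}_{\beta<\mu}\{0,1\},\le)$.

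First I would handle the easy direction. Suppose $T$ has an initial chain $C$ with $\mu=\abs{\imm(C)}=\kappa$. Then the corresponding ultrafilter $\U$ satisfies $(\U,\supseteq)\equiv_T(\cf C\times\prod^{\mathrm w}_{\beta<\kappa}\{0,1\},\le)$. Since $\prod^{\mathrm w}_{\beta<\kappa}\{0,1\}\equiv_T\topkappa$ by Fact \ref{f: top type equiv to a prod}, and $\topkappa$ is the maximum Tukey type for directed sets of size $\kappa$ (Fact \ref{facts.useful} (1)), we get $(\cf C\times\prod^{\mathrm w}_{\beta<\kappa}\{0,1\},\le)\equiv_T\topkappa$ (the product of $\topkappa$ with any partial order of size $\le\kappa$ is still Tukey-equivalent to $\topkappa$, as $\topkappa$ dominates both factors). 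Hence $\U$ has type $\topkappa$.

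For the converse, suppose $\U$ is an ultrafilter on $\treealg T$ of type $\topkappa$, with associated initial chain $C$. By Theorem \ref{tree algebras}, $\topkappa\equiv_T(\cf C\times\prod^{\mathrm w}_{\beta<\mu}\{0,1\},\le)$. Since $\abs{\treealg T}=\kappa$ and $\topkappa$ has cardinality $\kappa$, and $\cf C\le\kappa$ and $\mu\le\kappa$ (both are sizes of subsets of $T$), I claim $\mu=\kappa$. Suppose not, so $\mu<\kappa$. Then $\cf C\times\prod^{\mathrm w}_{\beta<\mu}\{0,1\}$ has cardinality $\max\{\cf C,\mu,\omega\}$, and the key point is that it cannot have Tukey type $\topkappa$ for cardinality reasons combined with the fact that a partial order Tukey-equivalent to $\topkappa$ must have size at least $\kappa$ — so $\max\{\cf C,\mu\}\ge\kappa$, forcing $\cf C=\kappa$. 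But $\cf C=\kappa>\omega$ is a regular cardinal, so $(\cf C,\le)$ contains a cofinal chain (namely $\kappa$ itself), whence $(\cf C\times\prod^{\mathrm w}_{\beta<\mu}\{0,1\},\le)\le_T(\topkappa)$ trivially but also, being the product of a linear order with something Tukey-below $\topkappa$ when $\mu<\kappa$, one checks it is Tukey-\emph{strictly}-below $\topkappa$: indeed by Fact \ref{facts.useful} (4), $(\kappa,\le)<_T\topkappa$, and adding a weak product over $\mu<\kappa$ copies of $\{0,1\}$ — itself of type $\prod^{\mathrm w}_{\beta<\mu}\{0,1\}\equiv_T([\mu]^{<\omega},\subseteq)<_T\topkappa$ — keeps the product strictly below $\topkappa$ since both factors are, and the product of two partial orders each $<_T\topkappa$ and of size $<\kappa$ or of the form $(\kappa,\le)$ remains $<_T\topkappa$. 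This contradiction shows $\mu=\kappa$, i.e.\ $C$ has $\kappa$-many immediate successors.

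The main obstacle is the converse direction, specifically the claim that if $\mu<\kappa$ then $(\cf C\times\prod^{\mathrm w}_{\beta<\mu}\{0,1\},\le)<_T\topkappa$ strictly. The cleanest way to nail this is: a directed partial order $D$ satisfies $D\equiv_T\topkappa$ only if $D$ has an unbounded subset of size $\kappa$ all of whose infinite subsets are unbounded (this is the characterization from Fact \ref{facts.useful} (2) adapted to directed orders, or can be extracted from Schmidt's work). In $(\cf C\times\prod^{\mathrm w}_{\beta<\mu}\{0,1\},\le)$ with $\mu<\kappa$, if $\cf C<\kappa$ the whole order has size $<\kappa$ and we are done; if $\cf C=\kappa$, any subset of size $\kappa$ must have $\kappa$-many points agreeing on the (finite-support, $\mu<\kappa$ coordinates) second component's support — more carefully, by a $\Delta$-system / pigeonhole argument on the finite supports, one extracts an infinite subset lying along an increasing chain in the first coordinate with fixed second coordinate, hence bounded. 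Therefore no size-$\kappa$ subset has all infinite subsets unbounded, so the type is not $\topkappa$. I would present this pigeonhole argument as the one genuinely new computation; everything else is assembly of prior results.
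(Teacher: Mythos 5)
Your proposal is correct and takes essentially the same route as the paper's (implicit) proof: read the corollary off from Theorem \ref{tree algebras} together with Fact \ref{f: top type equiv to a prod}, with the ``only if'' direction settled via the unbounded-family characterization of the maximum type (Fact \ref{facts.useful} (2)). The intermediate general claim that a product of two orders each strictly Tukey-below $\topkappa$ remains strictly below is not justified as stated, but it is harmless, since your final pigeonhole argument (fixing one of the fewer-than-$\kappa$ second coordinates and bounding a countable increasing first-coordinate sequence, using that $\cf C=\kappa$ must be regular and uncountable) supplies a complete and correct proof of exactly that step.
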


From Theorem \ref{tree algebras} we can describe the Tukey spectrum of a tree algebra.

\begin{thm} \label{c.spectrum of tree algebra}
Let $T$ be a tree. The Tukey spectrum of $\treealg T$ consists of those Tukey types $(\cf C \times \prod^\mathrm{w}_{\beta < \mu}\{0,1\}, \leq)$ where $C \subseteq T$ is an initial chain with $\mu$-many immediate successors. 
\end{thm}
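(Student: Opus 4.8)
The plan is to read this off directly from Theorem \ref{tree algebras} together with the one-to-one correspondence $\phi$ between ultrafilters on $\treealg T$ and initial chains in $T$. Recall that we may assume $T$ has a single root, so that $\phi$ is a bijection from $\ult(\treealg T)$ onto the set of initial chains of $T$; recall also that for an initial chain $C$ the set $\imm(C)$ of immediate successors of $C$ is well-defined even when $C$ has no top element, so that the parameter $\mu=\abs{\imm(C)}$ attached to $C$ is unambiguous. The whole proof is then just a bookkeeping argument over the two directions of set equality.

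For the forward inclusion, suppose $\tau$ is a Tukey type realized by some ultrafilter $\U$ on $\treealg T$. Let $C=\phi(\U)$ be the corresponding initial chain and let $\mu=\abs{\imm(C)}$. By Theorem \ref{tree algebras}, $(\U,\supseteq)\equiv_T(\cf C\times\prod^\mathrm{w}_{\beta<\mu}\{0,1\},\le)$, so $\tau$ is of the advertised form. For the reverse inclusion, let $C$ be any initial chain in $T$ and put $\mu=\abs{\imm(C)}$ and $\U=\phi^{-1}(C)$. Again by Theorem \ref{tree algebras}, $(\U,\supseteq)\equiv_T(\cf C\times\prod^\mathrm{w}_{\beta<\mu}\{0,1\},\le)$, so this type lies in the Tukey spectrum of $\treealg T$. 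Since $T$ is infinite it has at least one initial chain, so the spectrum is non-empty and exactly as described.

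There is no substantial obstacle here: the essential work was carried out in Theorem \ref{tree algebras}, and the present statement is a direct corollary. The only points that deserve a sentence of care are that $\phi$ really is a bijection onto the set of all initial chains (hence every initial chain, with its associated $\mu$, contributes to the spectrum and nothing else does) and that $\imm(C)$, and therefore $\mu$, is well-defined for chains without a maximum element — both of which were recorded in the preliminary discussion of pseudo-tree and tree algebras. One may also note in passing, using Fact \ref{f: top type equiv to a prod}, that the maximal type $\topkappa$ occurs in this spectrum exactly when $T$ has an initial chain with $\kappa$-many immediate successors, recovering Corollary \ref{U top iff kappa succs of C}.
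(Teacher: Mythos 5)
Your proposal is correct and matches the paper exactly: the paper gives no separate argument for this theorem, presenting it as an immediate consequence of Theorem \ref{tree algebras} via the bijection between ultrafilters and initial chains, which is precisely the bookkeeping you carry out. The points you flag (that $\phi$ is a bijection and that $\imm(C)$ is well-defined for chains without a top element) are indeed the only details needed, and they are already recorded in the paper's preliminary discussion.
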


Finally, we turn our attention to the broader class of pseudo-tree algebras.
Corollary \ref{U top iff kappa succs of C} showed that the tree algebra on an uncountable  tree $T$ has an ultrafilter of top Tukey type if and only if there is an initial chain $C$ in $T$ with $\abs{\imm(C)}=\abs{T}$.
One possible pseudo-tree analog of ``having an initial chain with $\abs{T}$-many immediate successors'' would be ``having an initial chain $C$ with $\varepsilon_C=\abs{T}$'', and so one could ask whether the ultrafilter corresponding to such an initial chain has top Tukey type. The answer is ``no''. For example, let $T$ be a linear order of order type $\omega_1 + 1 + \omega_1^*$ (this is also a pseudo-tree), and let $C$ consist of the first $\omega_1$-many elements of $T$. Then by Theorem \ref{interval algebras}, the ultrafilter corresponding to $C$ has type $(\omega_1, \leq)$, which is strictly less than the top type for $\treealg T$ by Fact \ref{facts.useful} (4).

The next Fact shows that uncountable pseudo-tree algebras always have ultrafilters whose Tukey type is strictly below the maximum type.

\begin{fact}\label{f.unctbl tree algebras have more than top type}
Let $T$ be an uncountable pseudo-tree. Then there is an ultrafilter $\U$ on $\treealg T$ whose Tukey type is strictly less than that of $([\abs{T}]^{<\omega}, \subseteq)$.
\end{fact}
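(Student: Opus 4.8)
The plan is to find, inside any uncountable pseudo-tree $T$, an initial chain $C$ whose associated ultrafilter has Tukey type a product $(\cf C \times \prod^{\mathrm{w}}_{i \in I} \kappa_i, \le)$ that is provably below the maximum $([\abs{T}]^{<\omega}, \subseteq)$. By Fact~\ref{facts.useful}(4), the quickest way to guarantee ``strictly below the maximum'' is to produce an ultrafilter whose associated directed partial order has cardinality (or cofinality) small enough that no countably-infinite-to-unbounded structure embeds — e.g.\ a chain of uncountable cofinality with no approximate immediate successors contributing, i.e.\ type $(\omega_1, \le)$, which is strictly below $([\abs{T}]^{<\omega},\subseteq)$ by Fact~\ref{facts.useful}(4). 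So the first thing I would do is case on whether $T$ has a branch (maximal chain) of uncountable length.

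\emph{Case 1: $T$ has a chain $C_0$ of order type $\omega_1$.} Take $C$ to be a minimal initial chain containing $C_0$; then $\cf C = \omega_1$. I want the set of approximate immediate successors above $C$ to not blow up the Tukey type — ideally $\varepsilon_C$ is small, or at least the ultrafilter $\U = \phi^{-1}(C)$ ends up with type $(\omega_1, \le)$ (via Lemma~\ref{lem.lambdafan}, which is cited as forthcoming, or by a direct computation of $H_C$ as in the interval-algebra argument). Concretely, $\U$ is generated by $H_C = \{(T\cn t)\setminus \bigcup_{s\in S}(T\cn s): t\in C,\ S \text{ a finite antichain above } C\}$, and I would argue that since $\cf C = \omega_1$ is uncountable while each generator only subtracts finitely many cones, the cofinal structure of $(H_C, \supseteq)$ is dominated by the chain $\langle T\cn c_\alpha : \alpha < \omega_1\rangle$ together with at most an $\omega$-fan of successor-removals, giving $(\U,\supseteq)\equiv_T (\omega_1 \times \prod^{\mathrm{w}}_{\beta<\mu}\{0,1\}, \le)$ for some $\mu$; and then by Fact~\ref{facts.useful}(4) this is strictly below $([\abs{T}]^{<\omega},\subseteq)$ provided $\mu < \abs{T}$ (if $\mu = \abs{T}$ one would instead pick a different $C$, or note the type is still $<_T$ the top because the $\omega_1$-chain forces countable subsets to remain bounded — this is the subtle point).

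\emph{Case 2: every chain in $T$ is countable.} Then, since $T$ is uncountable and each $T\bl t$ is a countable linear order, a counting/tree argument (analogous to the $\omega$-tree case of Proposition~\ref{treealg of size om has top type ult} but one level up) shows there must be some initial chain $C$ with uncountably many approximate immediate successors, OR $T$ splits into uncountably many pieces low down; in either subcase I would extract an initial chain $C$ with $\cf C = \omega$ (a countable chain) and $\varepsilon_C \ge \omega_1$, so that $(\U,\supseteq) \equiv_T (\omega \times \prod^{\mathrm{w}}_{\beta<\mu}\{0,1\},\le) \equiv_T (\prod^{\mathrm{w}}_{\beta<\mu}\{0,1\},\le) \equiv_T ([\mu]^{<\omega},\subseteq)$ for some $\mu < \abs{T}$ — strictly below the top since $\mu < \abs{T}$. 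To see such a $C$ with $\mu < \abs{T}$ exists: if \emph{every} initial chain with countable cofinality had $\varepsilon_C = \abs{T}$, one could still just take a very short $C$ (say $C = \{$root$\}$ if the root has fewer than $\abs{T}$ immediate successors) — the point is that not all the ``mass'' of $T$ can sit above a single initial chain as immediate successors unless $T$ is essentially a fan, and a fan of size $\abs{T}$ over a countable chain already has an ultrafilter of top type, so we instead descend to a proper subchain.

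The main obstacle I anticipate is the bookkeeping in showing that the pseudo-tree generators $H_C$ really do collapse to a clean product $(\cf C \times \prod^{\mathrm{w}}\{0,1\},\le)$ — unlike the tree case (Theorem~\ref{tree algebras}), in a pseudo-tree the ``approximate immediate successors'' need not form an antichain of genuine immediate successors, and $\varepsilon_C$ may be witnessed only by an infinite descending approximation; this is exactly what Lemma~\ref{lem.lambdafan} is designed to handle, so I would lean on the $\lambda$-fan analysis there rather than reprove it. The secondary subtlety is ensuring in each case that the relevant cardinal ($\mu$ or $\varepsilon_C$) is genuinely $< \abs{T}$, or else that the uncountable chain itself (Case 1) forces the Tukey type below the top via Fact~\ref{facts.useful}(4) even when $\mu$ is large — I would handle this by a direct argument that any countably infinite subset of $(\U,\supseteq)$ in the presence of a cofinal $\omega_1$-chain is bounded, hence $(\U,\supseteq) \not\equiv_T ([\abs{T}]^{<\omega},\subseteq)$ by Fact~\ref{facts.useful}(2).
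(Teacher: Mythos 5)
There is a genuine gap, and the key claim driving your Case 1 is false as stated. You assert that a cofinal $\omega_1$-chain in $C$ forces every countably infinite subset of $(\U,\supseteq)$ to be bounded, so that the type stays below the maximum even when the fan above $C$ has size $\abs{T}$. Consider the pseudo-tree consisting of a chain of order type $\omega_1$ with $\abs{T}$-many pairwise incomparable points sitting above the entire chain, and let $C$ be that chain. A bound for the countable family $\{(T\cn c)\setminus(T\cn s_i): i<\omega\}$ (the $s_i$ distinct top points) would have to contain a generator $(T\cn c')\setminus\bigcup_{s\in S}(T\cn s)$ with $S$ finite, which is impossible; so countable unbounded subsets do exist, and indeed $(\U,\supseteq)\equiv_T(\omega_1\times\prod^{\mathrm{w}}_{\alpha<\abs{T}}\{0,1\},\le)\equiv_T([\abs{T}]^{<\omega},\subseteq)$, the maximum type (consistent with Theorem \ref{p.when pseudo-tree algebras have top type ult}, since $\omega_1\le_T([\abs{T}]^{<\omega},\subseteq)$). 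So the long chain by itself buys nothing, and your fallback ``pick a different $C$'' is left without a recipe. Case 2 has a parallel problem: you never establish that the relevant cardinal $\mu$ is $<\abs{T}$, and when the approximate immediate successors form infinitely many descending chains with $\theta_\alpha$ infinite, the type is not of the form $[\mu]^{<\omega}$ at all, so ``strictly below the top since $\mu<\abs{T}$'' is unsupported.

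The missing idea is much simpler, and it is the paper's proof: take any maximal chain in $T$ and let $C$ be its downward closure, so that \emph{no} $t\in T$ lies above $C$. Then every antichain $S$ appearing in $H_C$ is empty, so $\U=\phi^{-1}(C)$ is generated by the chain $\{T\cn c: c\in C\}$, whence $(\U,\supseteq)\equiv_T(\kappa,\le)$ with $\kappa=\cf C$. No cardinal $(\kappa,\le)$ is Tukey equivalent to $([\abs{T}]^{<\omega},\subseteq)$ when $\abs{T}$ is uncountable: for $\kappa\le\omega$ the two sides have different cofinalities, and for $\kappa$ uncountable every countable subset of $\kappa$ is bounded while $([\abs{T}]^{<\omega},\subseteq)$ has unbounded countable subsets (compare Fact \ref{facts.useful}(4)). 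This avoids the $\lambda$-fan machinery of Lemma \ref{lem.lambdafan} and Theorem \ref{Tukey types of ults on pseudo-tree algebras} entirely, which your argument leans on and which, in the paper, appears only after this Fact.
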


\begin{proof}
Let $T$ be an uncountable pseudo-tree. Let $C$ be any initial chain such that for no $t \in T$ is $t > C$, and let $\U$ be the ultrafilter corresponding to $C$. Then $C$ is generated by $G=\{T \cn c: c \in C\}$, so that $(U, \supseteq) \equiv_T (G, \supseteq) \equiv_T (\kappa, \leq)$ for some cardinal $\kappa$. Since $T$ is uncountable, $(\kappa, \leq) \not\equiv_T ([\abs{T}]^{<\omega}, \subseteq)$.
\end{proof}

\begin{prop}\label{finite product types attained}
Let $\lambda$ be any cardinal (finite or infinite) and let $\{\kappa_\delta: \delta < \lambda\}$ be a set of 
regular 
cardinals, each of which is either infinite or else $2$. 
Then there is a pseudo-tree $T$ and an ultrafilter $\U$ on $\treealg T$ such that $(\U, \supseteq) \equiv_T \prod^{\mathrm{w}}_{\delta < \lambda} \kappa_\delta$.
\end{prop}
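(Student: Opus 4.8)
The plan is to build $T$ as a disjoint union, over $\delta<\lambda$, of ``stalks'' $T_\delta$ sitting above a common root, where each stalk is engineered so that its contribution to the ultrafilter is exactly the single-coordinate poset $\kappa_\delta$. For a coordinate with $\kappa_\delta=2$, the stalk $T_\delta$ should simply be a single vertex whose removal (via a cone $T\cn s$) toggles one bit; for a coordinate with $\kappa_\delta$ infinite, the stalk $T_\delta$ should be a linearly ordered chain of order type $\kappa_\delta$ with a top element, so that $\cf$ of the associated initial segment is $\kappa_\delta$. Concretely, I would fix a root $0$, and for each $\delta$ attach above $0$ a copy of the linear order $\kappa_\delta+1$ when $\kappa_\delta$ is infinite, and a copy of the two-element chain (root-plus-one-point — but all roots identified) when $\kappa_\delta=2$; one must be slightly careful that the ``root'' is shared so that the result is a pseudo-tree with a single root, with the various stalks forming an antichain above the root except for the root itself. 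Then let $C=\{0\}$ be the initial chain consisting of just the root, and let $\U=\phi^{-1}(C)$.

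The next step is to identify the generating set $H_C$ from the excerpt's general description and compute its Tukey type. With $C=\{0\}$, an element of $H_C$ has the form $(T\cn 0)\setminus\bigcup_{s\in S}(T\cn s)=T\setminus\bigcup_{s\in S}(T\cn s)$ where $S$ is a finite antichain of elements $>C$, i.e.\ $S$ picks finitely many of the stalks and, within each chosen stalk, a single element. Removing $T\cn s$ from an infinite-$\kappa_\delta$ stalk at the element corresponding to $\alpha<\kappa_\delta$ removes a final segment of that stalk; removing the single point of a $2$-stalk removes that whole stalk. So each generator is coded by a function $g\in\prod^{\mathrm w}_{\delta<\lambda}\kappa_\delta$: $g(\delta)$ records how far up stalk $\delta$ we have truncated (with $g(\delta)=0$ meaning ``untouched'', and for a $2$-stalk $g(\delta)\in\{0,1\}$), and all but finitely many $g(\delta)=0$. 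Larger $g$ means a smaller generator, so the map sending a generator to its code is an order isomorphism from $(H_C,\supseteq)$ onto $(\prod^{\mathrm w}_{\delta<\lambda}\kappa_\delta,\le)$ — here I should double-check the convention for an infinite stalk, truncating at $\alpha$ versus at $\alpha+1$, so that the codes range over all of $\kappa_\delta$ and the correspondence is genuinely onto and monotone; this is the kind of off-by-one bookkeeping that Facts \ref{facts.useful}(5) and \ref{f: top type equiv to a prod} already tolerate, so it will not cause real trouble. Then Fact \ref{fact.mono}(2) gives $(H_C,\supseteq)\equiv_T\prod^{\mathrm w}_{\delta<\lambda}\kappa_\delta$, and the excerpt's observation that $(\U,\supseteq)\equiv_T(H_C,\supseteq)$ finishes the computation.

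The main thing to verify carefully, and the step I expect to be the real (if modest) obstacle, is that $H_C$ really is a filter base generating $\U$ and that the isomorphism is exact — in particular that every finite antichain $S$ above $C$ does decompose coordinatewise (one element per stalk, since two elements in the same stalk are comparable and hence not both in an antichain), and that the coordinatewise order on generators matches reverse inclusion on the corresponding sets. One also needs $T$ to be genuinely a pseudo-tree: for each $t\in T$, $T\bl t$ linearly ordered — which holds because any $t$ lies in a unique stalk (or is the root), and $T\bl t$ is then the root together with an initial segment of that stalk's chain, hence linear. Finally, when some $\kappa_\delta$ is infinite, $T$ is infinite as required for the pseudo-tree-algebra correspondence to apply; if $\lambda$ is finite and all $\kappa_\delta=2$ then $\prod^{\mathrm w}_{\delta<\lambda}2$ is a finite poset and $T$ is a finite pseudo-tree, so the degenerate finite case should be noted separately (there $\treealg T$ is finite and the ultrafilter $\U$ is the principal one at the root, whose Tukey type is $1=\prod^{\mathrm w}_{\delta<\lambda}2$ only if $\lambda=0$ — so one restricts to $\lambda$ infinite or some $\kappa_\delta$ infinite, or handles the trivial finite product by inspection). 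With those checks in place the proof is complete.
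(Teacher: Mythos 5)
Your overall architecture (a single root with one stalk per coordinate, the ultrafilter at the initial chain $C=\{r\}$, and an order isomorphism between the generating set and $\prod^{\mathrm{w}}_{\delta<\lambda}\kappa_\delta$) matches the paper, but there is a genuine error in the key detail: the orientation of the infinite stalks. You attach above the root an \emph{increasing} chain of type $\kappa_\delta+1$ (or $\kappa_\delta$), whereas the paper attaches an \emph{inverted} copy of $\kappa_\delta$, i.e.\ a stalk isomorphic to $\kappa_\delta^*$, which descends toward the root and has no least element above it. This is not off-by-one bookkeeping. With an increasing stalk, the cone $T\cn s$ at the stalk's bottom element is available, and removing it deletes the entire stalk in one step; so within each coordinate the poset of possible removals (ordered by $\supseteq$ of the resulting generator) has a greatest element, and is cofinally trivial rather than of type $\kappa_\delta$. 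Concretely, with your tree the generating set $H_C$ is cofinally order-isomorphic to $[\lambda]^{<\omega}$ (choose finitely many stalks and delete them wholesale), so $(\U,\supseteq)\equiv_T\prod^{\mathrm{w}}_{\delta<\lambda}2$ no matter what the $\kappa_\delta$ are; in the single-stalk case $\lambda=1$, $\kappa_0=\omega_1$, your $\U$ contains the atom $\{r\}=(T\cn r)\setminus(T\cn b)$ ($b$ the stalk's minimum) and is principal, not of type $\omega_1$. Your stated correspondence ``larger $g$ means a smaller generator'' is exactly where this breaks: truncating an ascending stalk higher up removes \emph{less}, so the map is order-reversing in each infinite coordinate, and no relabelling of codes within the same tree can repair it, since $1+\kappa_\delta^*$ is not Tukey equivalent to $\kappa_\delta$ for infinite $\kappa_\delta$.

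The fix is the paper's: for infinite $\kappa_\delta$ take the stalk $T_\delta=\{t^\delta_\alpha:\alpha<\kappa_\delta\}$ isomorphic to $(\kappa_\delta)^*$, so the only removable pieces are the upper parts $T\cn t^\delta_\alpha=\{t^\delta_\beta:\beta\le\alpha\}$, and more removal corresponds to larger $\alpha$; then coding $\delta\notin F$ by $0$ and removal at $t^\delta_{\alpha_\delta}$ by $1+\alpha_\delta$ gives a genuine order isomorphism of $(G,\supseteq)$ with $(\prod^{\mathrm{w}}_{\delta<\lambda}\kappa_\delta,\le)$, and Fact \ref{fact.mono}(2) together with $(\U,\supseteq)\equiv_T(G,\supseteq)$ finishes the proof. (Your $2$-stalks, a single point per coordinate, are correct and agree with the paper's $\kappa'_\delta=1$.) A small further quibble: your aside about the all-finite case is off, since for finite $\lambda$ the poset $\prod^{\mathrm{w}}_{\delta<\lambda}2$ has a maximum and is Tukey equivalent to $1$, which is exactly the type of the principal ultrafilter the construction produces, so no special restriction is needed there.
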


\begin{proof}
Let 
 regular
cardinals $\kappa_\delta$, for $\delta < \lambda$, be given, where each $\kappa_{\delta}$ is either infinite or else $2$.  
For each $\delta<\lambda$, 
let $\kappa'_\delta=\kappa_\delta$ if $\kappa_\delta$ is an infinite cardinal, and let $\kappa'_\delta=1$ if $\kappa_\delta=2$.
Let $T$ be the pseudo-tree constructed by putting above a single root $r$ $\lambda$-many pairwise-incomparable linear orders $T_\delta=\{t^\delta_\alpha: \alpha < \kappa'_\delta\}$ where each $T_\delta$ is an inverted copy of $\kappa'_\delta$ -- that is, $T_\delta$ is isomorphic to $(\kappa'_\delta)^*$.
 Let $A=\treealg T$ be the pseudo-tree algebra on $T$. Let $C=\{r\}$ and let $\U$ be the ultrafilter associated with $C$. 
Then $\U=\langle G \rangle$ where \[G=\{(T \cn r) \setminus \bigcup_{s \in S}(T \cn s): S \textrm{ is a finite antichain of elements above } r\}.\] 
Since any such antichain $S$ consists of at most one element $t^\delta_{\alpha_\delta}$ from each $T_\delta$, for $\delta < \lambda$, a typical element of $G$ is of the form 
$(T \cn r) \setminus \bigcup_{\delta\in F}(T \cn t^{\delta}_{\alpha_{\delta}})$ for some  $F\in [\lambda]^{<\om}$ and  $\alpha_{\delta}<\kappa'_{\delta}$, $\delta\in F$. 
Also note that $(G, \supseteq) \equiv_T (\U, \supseteq)$, since $G$ is cofinal in $\U$.

Define $f \colon (G, \supseteq) \to (\prod^{\mathrm{w}}_{\delta < \lambda}\kappa_\delta, \leq)$ by \[f\left( (T \cn r) \setminus \bigcup_{\delta\in F}(T \cn t^{\delta}_{\alpha_{\delta}}) \right)=\langle \beta_\delta: \delta < \lambda \rangle\] where, for $\delta < \lambda$, 
\[
\beta_\delta = \begin{cases}
0 & \textrm{ if } \delta \not\in F\\
1+\alpha_{\delta} & \textrm{ if } \delta \in F
\end{cases}
\]
(That is:\ in those coordinates where no part of the chain $T_\delta$ has been excluded, let  $\beta_\delta=0$; and in those coordinates where $T_\delta$ has been cut-off at $t^{\delta}_{\alpha_{\delta}}$, let $\beta_\delta=1+\alpha_{\delta}$.)
Then $f$ is a bijection between $(G, \supseteq)$ and  $(\prod^{\mathrm{w}}_{\delta < \lambda}\kappa_\delta, \leq)$ such that for all $g,g'\in G$, $g\le g'\lra f(g)\le f(g')$.
By Fact \ref{fact.mono} (2), 
 $(\U, \supseteq) \equiv_T (G, {\supseteq}) \equiv_T (\prod^{\mathrm{w}}_{\delta < \lambda}\kappa_\delta, \leq)$.
\end{proof}

The following Lemma \ref{lem.lambdafan} distills the essential structure inside any set of approximate immediate successors above a chain in a pseudo-tree.
This structure is given by the following notion of a $\lambda$-fan.

\begin{defn}\label{def.lambdafan}
Let   $C$ be an initial chain in a pseudo-tree $T$.
A set $\Lambda=\{t^\alpha_\beta: \alpha < \lambda, \beta < \theta_\alpha\}$   of elements of $T$ above $C$
  is a \textit{$\lambda$-fan above $C$} if  the following four conditions hold:
\begin{enumerate}
\item
$\Lambda$ is a set of approximate immediate successors of $C$.
\item
$\lambda$ is a cardinal and for each $\alpha<\lambda$, $\theta_\alpha$ is either equal to $1$ or an infinite regular cardinal.
\item
For each $\al<\lambda$, 
$\{t^\alpha_\beta:  \beta < \theta_\alpha\}$
 is a strictly decreasing coinitial chain  above $C$.
\item
For $\alpha < \alpha' < \lambda$, $\beta < \theta_\alpha$, and $\beta' < \theta_{\alpha'}$, $t^\alpha_\beta \perp t^{\alpha'}_{\beta'}$. 
\end{enumerate}
\end{defn}

\begin{lem}\label{lem.lambdafan}
Let $T$ be a pseudo-tree and  let $C$ be an initial chain in $T$.
Given any set $S$ of approximate immediate successors of $C$,
there is a $\lambda$-fan $\Lambda=\{t^{\al}_{\beta}: \al<\lambda, \beta<\theta_{\al}\}$ above $C$ which is coinitial in $S$.
Moreover, the cardinal $\lambda$ and set of cardinals $\{\theta_{\al}:\al<\lambda\}$ are uniquely determined by $C$.
\end{lem}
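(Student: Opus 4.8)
The plan is to extract the branching structure of $T$ just above $C$ and to build $\Lambda$ by choosing one coinitial chain inside each branch, keeping these chains inside $S$. Write $U=\{t\in T:t>C\}$ for the set of elements strictly above all of $C$, and define a relation $\approx$ on $U$ by $s\approx s'$ iff some $u\in U$ satisfies $u\le s$ and $u\le s'$. First I would verify that $\approx$ is an equivalence relation; only transitivity uses the pseudo-tree axiom: if $u$ witnesses $s\approx s'$ and $u'$ witnesses $s'\approx s''$, then $u,u'\in T\bl s'$ are comparable, and the smaller of the two witnesses $s\approx s''$. Enumerate the $\approx$-classes as $\{\mathcal B_\alpha:\alpha<\lambda\}$; call these the \emph{branches} above $C$. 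By its very definition, any set of approximate immediate successors of $C$ is a subset of $U$ that is coinitial in $(U,\le)$, and hence meets every branch (any $b$ in a branch has an element of the set below it, which then lies in the same branch); in particular $S$ meets every branch.

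Now for each $\alpha<\lambda$ I would fix $r_\alpha\in S\cap\mathcal B_\alpha$ and set $D_\alpha=\{t\in S:C<t\le r_\alpha\}$; this is a chain, being a subset of $T\bl r_\alpha$, and $D_\alpha\sse\mathcal B_\alpha$. I would check that $D_\alpha$ is coinitial in $\mathcal B_\alpha$: given $s\in\mathcal B_\alpha$ there is $u\in U$ with $u\le s$ and $u\le r_\alpha$, then (by coinitiality of $S$ in $U$) an $r\in S$ with $r\le u$, and then $r\in D_\alpha$ with $r\le s$. A standard argument --- two such down-sets share a common coinitial subchain --- shows that $\ci(D_\alpha)$ depends only on $\mathcal B_\alpha$ and equals $\ci(\{t\in T:C<t\le b\})$ for every $b\in\mathcal B_\alpha$; call this common value $\theta_\alpha$, which is $1$ or an infinite regular cardinal since it is a coinitiality. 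Pick a strictly decreasing $\langle t^\alpha_\beta:\beta<\theta_\alpha\rangle$ coinitial in $D_\alpha$ and put $\Lambda=\{t^\alpha_\beta:\alpha<\lambda,\ \beta<\theta_\alpha\}$. Clauses (2) and (3) of Definition \ref{def.lambdafan} are immediate from the construction; clause (1) holds since every $s\in U$ lies in some $\mathcal B_\alpha$ and hence above some $t^\alpha_\beta$; clause (4) holds since for $\alpha\ne\alpha'$ the elements $t^\alpha_\beta$ and $t^{\alpha'}_{\beta'}$ lie in distinct $\approx$-classes and are therefore incomparable. Finally, since $\Lambda\sse S\sse U$ is coinitial in $U$, it is coinitial in $S$.

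For the uniqueness clause, let $\Lambda'=\{v^\alpha_\beta:\alpha<\lambda',\ \beta<\theta'_\alpha\}$ be an arbitrary $\lambda$-fan above $C$; I would show $\lambda'$ is the number of branches and $\{\theta'_\alpha:\alpha<\lambda'\}=\{\theta_\alpha:\alpha<\lambda\}$ as multisets, so that both are determined by $C$. Each fiber $F_\alpha=\{v^\alpha_\beta:\beta<\theta'_\alpha\}$ is a chain in $U$ by clause (3), hence lies in a single branch, giving a map $\sigma$ from $\lambda'$ to the set of branches. If $F_\alpha$ and $F_{\alpha'}$ lay in the same branch, a common lower bound $u\in U$ of $v^\alpha_0$ and $v^{\alpha'}_0$ would by clause (1) have some $v^\gamma_\delta\le u$ below it; then $v^\gamma_\delta\le v^\alpha_0$ forces $\gamma=\alpha$ by clause (4), while $v^\gamma_\delta\le v^{\alpha'}_0$ forces $\gamma=\alpha'$ --- so $\sigma$ is injective. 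For a branch $\mathcal B$ and $b\in\mathcal B$, clause (1) gives some $v^\alpha_\beta\le b$, whence $F_\alpha\sse\mathcal B$ and $\sigma(\alpha)=\mathcal B$ --- so $\sigma$ is a bijection and $\lambda'$ is the number of branches. Lastly, clause (1) together with injectivity of $\sigma$ shows that $F_\alpha$ is coinitial in the chain $\{t\in T:C<t\le v^\alpha_0\}$ (which contains $F_\alpha$); as $F_\alpha$ is strictly decreasing of length $\theta'_\alpha$ with $\theta'_\alpha$ equal to $1$ or regular, the coinitiality of this chain must be $\theta'_\alpha$, that is, $\theta'_\alpha=\theta_{\sigma(\alpha)}$.

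The step I expect to be most delicate is the handling of $\approx$: branches are generally not chains (two $\approx$-equivalent elements may be incomparable), so ``the coinitiality of a branch'' must be routed through the coinitialities of the down-sets $\{t\in T:C<t\le b\}$ and shown independent of $b$; and in the uniqueness argument it is precisely clauses (1) and (4) of a $\lambda$-fan acting together that keep distinct fibers in distinct branches. The remaining verifications --- transitivity of $\approx$, that $\Lambda$ meets clauses (1)--(4), and that a strictly decreasing coinitial sequence of regular (or trivial) length in a linear order has exactly that length for its coinitiality --- are routine.
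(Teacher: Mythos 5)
Your proof is correct and takes essentially the same route as the paper's: your equivalence relation $\approx$ on $\{t\in T:t>C\}$ is exactly the relation ``the down-sets $(T\downarrow s)\setminus C$ and $(T\downarrow s')\setminus C$ intersect'' that the paper's Subclaim~1 organizes via the least-index representatives $s_{\gamma(\delta)}$, and both arguments then choose one strictly decreasing coinitial sequence per class and prove uniqueness by matching classes/fibers of an arbitrary fan. Your intrinsic ``branch'' formulation makes the fact that $\lambda$ and the $\theta_\al$ depend only on $C$ a bit more transparent (and works verbatim for an arbitrary $S$, as the statement requires), but the substance of the argument is the same.
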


\begin{proof}
Let $T$ be a pseudotree and let 
 $C$ be an initial chain in $T$.
Let $S\sse T$ be a set of approximate immediate successors of $C$ of minimal cardinality.
Let $\kappa=|S|$, and enumerate the members of $S$ as $s_{\delta}$, for $\delta<\kappa$.
For each $\delta<\kappa$,
let $\gamma(\delta)$ be the least $\gamma<\kappa$ such that 
$((T \bl s_{\gamma}) \setminus C)\cap((T \bl s_\delta) \setminus C)\ne\emptyset$.
Note that $\gamma(\delta)\le\delta$.
Observe that 
each $(T \bl s_\delta) \setminus C$ is ``closed downwards above $C$'' -- that is, if $t \in (T \bl s_\delta) \setminus C$ and $C < s \leq t$, then $s \in (T \bl s_\delta) \setminus C$.

\begin{subclaim} $((T \bl s_\delta)\setminus C)\cap((T \bl s_\eta)\setminus C)\ne \emptyset$
if and only if $\gamma(\delta)=\gamma(\eta)$.
\end{subclaim}

\begin{proof}
 First suppose $((T \bl s_\delta)\setminus C)\cap((T \bl s_\eta)\setminus C)\ne \emptyset$. Since also $((T \bl s_\delta)\setminus C)\cap((T \bl s_{\gamma(\delta)})\setminus C)\ne \emptyset$ and all of these sets are downwards-closed linear orders above $C$, we have $((T \bl s_\eta)\setminus C)\cap((T \bl s_{\gamma(\delta)})\setminus C)\ne \emptyset$. Then $\gamma(\eta) \leq \gamma(\delta)$ by minimality of $\gamma(\eta)$. Similarly $\gamma(\delta) \leq \gamma(\eta)$.

Now suppose that $\gamma(\delta) = \gamma(\eta)$. Then $((T \bl s_\delta)\setminus C)\cap((T \bl s_{\gamma(\delta)})\setminus C)\ne \emptyset$ and $((T \bl s_\eta)\setminus C)\cap((T \bl s_{\gamma(\delta)})\setminus C)\ne \emptyset$. It follows that $((T \bl s_\delta)\setminus C)\cap((T \bl s_\eta)\setminus C)\ne \emptyset$.
\end{proof}

By Subclaim 1,  $(T \bl s_\delta) \setminus C$ and $(T \bl s_\eta) \setminus C$ are disjoint if and only if 
$\gamma(\delta)\ne\gamma(\eta)$.
Let $\lambda$ be the cardinality of the set $\{\gamma(\delta):\delta<\kappa\}$, and enumerate the set $\{s_{\gamma(\delta)}:\delta<\kappa\}$ as $\{t^\al_0:\al<\lambda\}$.
For each $\al<\lambda$, let $\theta_{\al}$ be the coinitiality of $(T \bl t^{\al}_0) \setminus C$, and let $\{t^\al_\beta:  \beta < \theta_\al\}$ be a decreasing coinitial sequence in $(T \bl t^\al_0)\setminus C$.
It follows that  (2) - (4) in the definition of a $\lambda$-fan hold.

\begin{subclaim}
Let $\Lambda=\{t^\al_\beta: \al< \lambda, \beta < \theta_\al\}$. Then $\Lambda$ is a set of approximate immediate successors of $C$.
\end{subclaim}

\begin{proof}
  First note that $C < \Lambda$. Suppose $r>C$. As $S$ is a set of approximate immediate successors of $C$, there is an $s_\delta \in S$ with $C < s_\delta \leq r$.
We have $((T \bl s_\delta) \setminus C) \cap ((T \bl s_{\gamma(\delta)}) \setminus C) \neq \emptyset$. 
Say $s_{\gamma(\delta)}=t_0^{\delta}$ for some $\delta < \lambda$. 
Pick $\beta$ large enough so that $t^\al_\beta \in  ((T \bl s_\delta) \setminus C) \cap ((T \bl s_{\gamma(\delta)}) \setminus C)$. 
Then $t^\al_\beta \in \Lambda$ and $C < t^\al_\beta \leq s_\delta \leq r$. Thus $\Lambda$ is a set of approximate immediate successors of $C$; hence, Subclaim 2 holds. 
\end{proof}

Thus, (1) in the definition of $\lambda$-fan holds, so  $\Lambda$ is a $\lambda$-fan above $C$.

Now suppose $S'$ is another set of approximate immediate successors of $C$  and that  $\Lambda'=\{u^{\xi}_{\zeta}:\xi<\lambda',\ \zeta<\theta'_{\xi}\}\sse S'$ is 
 a $\lambda'$-fan  above $C$.
For each $\al<\lambda$, let $S_{\al}$ denote $\{t^{\al}_{\beta}:\beta<\theta_{\al}\}$ and for each $\xi<\lambda'$, let $S'_{\xi}$ denote $\{u^{\xi}_{\zeta}:\zeta<\theta'_{\xi}\}$.
Let $\overline{S}_{\al}$ denote $\{t\in T: t>C $ and $\exists \beta<\theta_{\al}(t\le t^{\al}_{\beta})\}$,
and  $\overline{S}_{\xi}'$ denote  $\{t\in T: t>C $ and $\exists \zeta<\theta_{\xi}'(t\le u^{\xi}_{\zeta})\}$.
For each $\al<\lambda$, there is a $\xi(\al)<\lambda'$ and a $\zeta(\al)<\theta'_{\xi(\al)}$ such that $u^{\xi(\al)}_{\zeta(\al)}\le t^{\al}_0$.
Thus,
 $\overline{S}_{\al}$ and $\overline{S}_{\xi(\al)}'$ 
have a common coinitial segment above $C$.
By property (4) of a $\lambda$-fan,
for all $\xi'\ne\xi(\al)$,
$\overline{S}'_{\xi'}\cap\overline{S}'_{\xi(\al)}=\emptyset$.
Therefore,
this  $\xi(\al)$ is unique.  
Define $\varphi(\al)$ to be this $\xi(\al)$.
This defines  a one-to-one function $\varphi:\lambda\ra\lambda'$.
Since for each $\xi<\lambda'$ there is an $\al(\xi)<\lambda$ and a $\beta(\xi)$ such that $t^{\al(\xi)}_{\beta(\xi)}\le u^{\xi}_0$,
a similar argument reveals that
the function $\varphi$ is also onto $\lambda'$.
Thus, $\varphi$ is a bijection; hence $\lambda=\lambda'$.
As noted above, for each $\al<\lambda$,
$\overline{S}_{\al}$ and $\overline{S}_{\varphi(\al)}'$ 
have a common coinitial segment above $C$.  
The coinitiality of this segment must simultaneously be equal to $\theta_{\al}$ and $\theta'_{\varphi(\al)}$.
Hence, $\theta'_{\varphi(\al)}=\theta_{\al}$.
Therefore, the cardinals representing any $\lambda$-fan above $C$ are uniquely determined by $C$.
\end{proof}

The previous lemma is now applied to characterize the Tukey types of ultrafilters on  pseudo-tree algebras.

\begin{thm}\label{Tukey types of ults on pseudo-tree algebras} 
Let $T$ be a pseudo-tree and let $\U$ be an ultrafilter on $\treealg T$. 
Then $\U$ is Tukey equivalent to $(\mu\times\prod^{\mathrm{w}}_{\al < \lambda}\kappa_\al, \leq)$,
where 
$\mu$ is either $1$ or an infinite regular cardinal,
$\lambda$ is some cardinal, and for $\al<\lambda$, $\kappa_\al$ is either $2$ or else an infinite regular cardinal.
\end{thm}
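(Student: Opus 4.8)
The plan is to combine the correspondence between ultrafilters and initial chains with the $\lambda$-fan structure theorem (Lemma~\ref{lem.lambdafan}) to directly compute the Tukey type of an arbitrary ultrafilter on $\treealg T$. First I would let $C=\phi(\U)$ be the initial chain corresponding to $\U$, fix an increasing cofinal sequence $\{c_\xi:\xi<\mu\}$ in $C$ where $\mu=\cf C$ (taking $\mu=1$ if $C$ has a top element), and recall that $(\U,\supseteq)\equiv_T(H_C,\supseteq)$, where $H_C$ is the generating set defined in terms of finite antichains $S$ of elements above $C$. The key reduction is to replace the arbitrary antichains $S>C$ appearing in $H_C$ by antichains drawn from a fixed $\lambda$-fan $\Lambda=\{t^\al_\beta:\al<\lambda,\ \beta<\theta_\al\}$ above $C$, as supplied by Lemma~\ref{lem.lambdafan}: since $\Lambda$ is coinitial in a set of approximate immediate successors of $C$, the cofinal (in $\U$, i.e.\ $\supseteq$-downward) subset of $H_C$ obtained by only using antichains $S\subseteq\Lambda$ is still cofinal, hence Tukey equivalent to $(\U,\supseteq)$ by Fact~\ref{facts.useful}(6).

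Next I would define the candidate cardinals: set $\kappa_\al=\theta_\al$ if $\theta_\al$ is infinite, and $\kappa_\al=2$ if $\theta_\al=1$ (matching the correspondence used in the proof of Proposition~\ref{finite product types attained}). The plan is then to exhibit an explicit order-preserving-and-reflecting cofinal map $f$ from the restricted generating set $G=\{(T\cn c_\xi)\setminus\bigcup_{\al\in F}(T\cn t^\al_{\beta_\al}):\xi<\mu,\ F\in[\lambda]^{<\om},\ \beta_\al<\theta_\al\}$ into $\mu\times\prod^{\mathrm w}_{\al<\lambda}\kappa_\al$, sending such a generator to $\langle\xi\rangle$ concatenated with the sequence whose $\al$-th coordinate is $0$ if $\al\notin F$ and $1+\beta_\al$ if $\al\in F$ (when $\theta_\al=1$ the only choice is $\beta_\al=0$, giving coordinate value $1<2$, so the cut-off versus non-cut-off cases correspond exactly to the two values of $\kappa_\al=2$). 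As in Theorems~\ref{tree algebras} and \ref{interval algebras} and Proposition~\ref{finite product types attained}, I would check that $f$ is cofinal (given any target point, pick $c_\xi$ above the chosen level and use that $\{t^\al_\beta:\beta<\theta_\al\}$ is coinitial above $C$ to find a generator below it) and unbounded/Tukey (an unbounded $X\subseteq G$ must be unbounded either in the $\mu$-coordinate or in some $\theta_\al$-coordinate or spread over infinitely many $\al\in\lambda$, and in each case $f[X]$ is unbounded in the product). Actually, because the fan elements along different $\al$ are pairwise incomparable (property (4) of a $\lambda$-fan) and the $t^\al_\beta$ strictly decrease (property (3)), distinct generators in $G$ map to distinct product elements and $f$ both preserves and reflects $\le$; so Fact~\ref{fact.mono}(2) gives the Tukey equivalence outright, just as in Proposition~\ref{finite product types attained}.

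Concatenating the equivalences yields $(\U,\supseteq)\equiv_T(G,\supseteq)\equiv_T(\mu\times\prod^{\mathrm w}_{\al<\lambda}\kappa_\al,\le)$, with $\mu$ either $1$ or an infinite regular cardinal (regularity since $\mu=\cf C$), $\lambda$ a cardinal, and each $\kappa_\al$ either $2$ or an infinite regular cardinal (regularity of $\theta_\al$ is built into the definition of a $\lambda$-fan). I expect the main obstacle to be the cofinality verification of $f$: one must argue carefully that for an arbitrary target $\langle\xi,e_0,e_1,\dots\rangle$ — with $F=\{\al:e_\al\neq0\}$ finite — there is a single generator in $G$ lying $\supseteq$-below the appropriate generators, which requires using that each coinitial chain $\{t^\al_\beta:\beta<\theta_\al\}$ reaches arbitrarily far down toward $C$ so that $(T\cn t^\al_{\beta_\al})$ can be made to contain the relevant cut, and that these finitely many constraints can be met simultaneously inside one set $(T\cn c_\xi)\setminus\bigcup_{\al\in F}(T\cn t^\al_{\beta_\al})$; the incomparability clause (4) is exactly what guarantees these cuts in different $\al$'s do not interfere. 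The rest is bookkeeping parallel to the tree-algebra case.
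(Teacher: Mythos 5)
Your proposal is correct and follows essentially the same route as the paper's proof: pass to the initial chain $C$, invoke Lemma \ref{lem.lambdafan} to replace arbitrary antichains above $C$ by finite antichains from a $\lambda$-fan, and map the resulting generating set onto $\mu\times\prod^{\mathrm{w}}_{\al<\lambda}\kappa_\al$ (with $\kappa_\al=1+\theta_\al$, i.e.\ the same index shift) via an order-preserving-and-reflecting bijection, concluding by Fact \ref{fact.mono}(2). The only difference is cosmetic: the paper asserts the bijection argument directly rather than also sketching the cofinal/unbounded verification you outline as a fallback.
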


\begin{proof}
Let $T$ be a pseudotree and let $\mathcal{U}$ an ultrafilter on $\treealg T$.
Let $C=\phi(\U)$ be the initial chain corresponding to $\mathcal{U}$ and let $S\sse T$ be a set of approximate immediate successors of $C$ of minimal cardinality.
By Lemma
\ref{lem.lambdafan},
there is a coinitial subset $\Lambda\sse S$ which is a
  $\lambda$-fan above $C$ of the form $\{t^{\al}_{\beta}:\al<\lambda,\ \beta<\theta_{\al}\}$
which is coinitial in $S$,  where $\lambda$ is a cardinal, each $\theta_{\al}$ is either $1$ or  an infinite regular cardinal, and these cardinals are uniquely determined by $C$.
For each $\al<\lambda$, let $\kappa_{\al}=1 +\theta_{\al}$.
Shifting the lower indices of the members of $\Lambda$ by one and letting $s^{\al}_{1+\beta}=t^{\al}_{\beta}$,
we re-write  $\Lambda$ as $\{s^{\al}_\beta:\al<\lambda$, $1\le\beta<\kappa_{\al}\}$.

Let $\mu=\cf C$, and let
$\{c_\xi: \xi < \mu\}$ be an increasing cofinal sequence in $C$. 
Then  $\U=\langle G \rangle$ where 
\[G=\{(T \cn c_\xi)\setminus \bigcup_{s \in F}(T \cn s): \xi < \mu \textrm{ and $F$ is a finite antichain in $\Lambda$}\},\]
since $\Lambda$ is a set of approximate immediate successors of $C$. 
We claim that $(G, \supseteq) \equiv_T (\mu \times \prod^{\mathrm{w}}_{\al < \lambda}\kappa_\al, \leq)$. 
Define a map $f \colon G \to \mu \times \prod^{\mathrm{w}}_{\al < \lambda}\kappa_\al$ by 
$f((T \cn c_\xi)\setminus \bigcup_{s \in F}(T \cn s)) = \langle \xi \rangle^{\smallfrown} \langle \varepsilon_\al: \al < \lambda \rangle$, where for $\al<\lambda$,
\[
\varepsilon_\al = \begin{cases}
0, \textrm{ if } s^\al_\beta \not\in F \textrm{ for all }1 \leq \beta < \kappa_\al\\
\beta, \textrm{ if } s^\al_\beta \in F \textrm{ for some }1 \leq \beta < \kappa_\al.
\end{cases}
\]
Note that $f$ is a bijection which preserves order both ways; that is,  for $g,g'\in G$, $g\le g'\lra f(g)\le f(g')$.
Thus $(\U, \subseteq) \equiv_T (G, \supseteq) \equiv_T \mu \times \prod^{\mathrm{w}}_{\al < \lambda}\kappa_\al$,
by Fact \ref{fact.mono} (2). 
\end{proof}

Note that this is consistent with what we already know about ultrafilters on tree algebras from Corollary \ref{U top iff kappa succs of C}, because for any $\kappa$, $\topkappa \equiv_T \prod^{\mathrm{w}}_{\alpha < \kappa} \{0,1\}$.

Since by Lemma \ref{lem.lambdafan},  every set of approximate immediate successors of an initial chain $C$ has, up to isomorphism, the same $\lambda$-fan,
we shall speak of {\em the} $\lambda$-fan above $C$.
The characterization of the Tukey spectra of pseudo-trees  follows immediately from
Lemma \ref{lem.lambdafan} and
Theorem \ref{Tukey types of ults on pseudo-tree algebras}.
Further, combined with Fact \ref{facts.useful} (2), they tell us exactly when a pseudo-tree has an ultrafilter with maximum Tukey type.

\begin{thm}\label{p.when pseudo-tree algebras have top type ult}
Let $T$ be a pseudo-tree with a single root. The Tukey spectrum of $\treealg T$ consists of exactly those types $(\cf C \times \prod^{\mathrm{w}}_{\alpha < \lambda}\kappa_\alpha, \leq)$,
 where $C$ is an initial chain with  $\lambda$-fan  of approximate immediate successors  $\Lambda=\{s^\alpha_\beta: \alpha < \lambda, \beta < \theta_\alpha\}$
and $\kappa_{\alpha}=1+\theta_{\alpha}$.

If $\abs{T}=\kappa$, then $T$ has an ultrafilter with maximal type $([\kappa]^{< \omega}, \subseteq)$ if and only if there is an initial chain $C \subseteq T$
with a $\kappa$-fan $K=\{s^\alpha_\beta: \alpha < \kappa, \beta < \theta_\alpha\}$ of approximate immediate successors above $C$,  where $\kappa$-many of the cardinals $\theta_\alpha$ are 1.
\end{thm}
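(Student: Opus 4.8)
The first assertion is essentially a translation of Theorem~\ref{Tukey types of ults on pseudo-tree algebras}. Recall that $\phi$ is a bijection between the ultrafilters on $\treealg T$ and the initial chains of $T$; by Lemma~\ref{lem.lambdafan} each initial chain $C$ determines a unique cardinal $\lambda$ together with a unique list of cardinals $\{\theta_\alpha:\alpha<\lambda\}$, the invariants of \emph{the} $\lambda$-fan above $C$; and Theorem~\ref{Tukey types of ults on pseudo-tree algebras} computes the Tukey type of $\phi^{-1}(C)$ to be precisely $(\cf C\times\prod^{\mathrm{w}}_{\alpha<\lambda}(1+\theta_\alpha),\leq)$, i.e.\ with $\kappa_\alpha=1+\theta_\alpha$ as in the statement. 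So I would present this paragraph as little more than a citation of those two results. For the second assertion, set $\kappa=\abs T$; then also $\abs{\treealg T}=\kappa$ (the algebra is generated by the $\kappa$ cones $T\cn t$), and for every initial chain $C$ one has $\cf C\le\abs C\le\kappa$, $\lambda\le\kappa$, and $\theta_\alpha\le\kappa$. I would take $\kappa>\omega$, the countable case going through as in Proposition~\ref{treealg of size om has top type ult}.

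The backward direction is short. Suppose $C$ is an initial chain whose $\lambda$-fan is a $\kappa$-fan in which $\kappa$-many $\theta_\alpha$ equal $1$; put $I=\{\alpha<\kappa:\kappa_\alpha=2\}$, so $\abs I=\kappa$, and let $P:=\cf C\times\prod^{\mathrm{w}}_{\alpha<\kappa}\kappa_\alpha$ be the Tukey type of $\U=\phi^{-1}(C)$, as furnished by the first part. The projection of $P$ onto $\prod^{\mathrm{w}}_{\alpha\in I}\{0,1\}$ is monotone and onto, hence cofinal (Fact~\ref{fact.mono}(1)), so $\topkappa\equiv_T\prod^{\mathrm{w}}_{\alpha\in I}\{0,1\}\le_T P$ by Fact~\ref{f: top type equiv to a prod}; while $P\le_T\topkappa$ by Fact~\ref{facts.useful}(1), since $\abs P\le\kappa$. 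Hence $\U$ has the top type. (Alternatively one writes down directly the witness required by Fact~\ref{facts.useful}(2): for $\alpha\in I$ take the element of $P$ that is $1$ in the $\alpha$-th weak-product coordinate and $0$ elsewhere, paired with a fixed first coordinate; any infinite subfamily has infinite total support, hence is unbounded in $P$.)

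For the forward direction, suppose $\U$ has the top type, let $C=\phi(\U)$, and let $P=\cf C\times\prod^{\mathrm{w}}_{\alpha<\lambda}\kappa_\alpha$ be its Tukey type. By Fact~\ref{facts.useful}(2), transported along the isomorphism of Theorem~\ref{Tukey types of ults on pseudo-tree algebras} (using Fact~\ref{fact.mono}(2)), there is $X\sse P$ with $\abs X=\kappa$ all of whose infinite subsets are unbounded in $P$. The plan is to argue that this forces the $\lambda$-fan above $C$ to be a $\kappa$-fan with $\kappa$-many $\theta_\alpha=1$. Split the weak-product coordinates into $I_2=\{\alpha:\kappa_\alpha=2\}$ and $I_\infty$, the coordinates with $\kappa_\alpha$ infinite regular. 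If $\abs{I_2}<\kappa$, then $\prod^{\mathrm{w}}_{\alpha\in I_2}\{0,1\}\equiv_T([\abs{I_2}]^{<\omega},\sse)$ has cardinality $<\kappa$, so $P\equiv_T\cf C\times([\abs{I_2}]^{<\omega},\sse)\times\prod^{\mathrm{w}}_{\alpha\in I_\infty}\kappa_\alpha$, in which every factor except possibly $\prod^{\mathrm{w}}_{\alpha\in I_\infty}\kappa_\alpha$ has size $<\kappa$ (here one uses that $\cf C=\kappa$ would force $\kappa$ regular, being the cofinality of a chain). Writing the members of $X$ in these coordinates, a $\Delta$-system argument on their finite supports inside $I_\infty$ together with pigeonholing on the small remaining factors should produce an infinite subfamily of $X$ that is bounded, contradicting the choice of $X$; hence $\abs{I_2}=\kappa$, which forces $\lambda\ge\abs{I_2}=\kappa$, hence $\lambda=\kappa$, as required.

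I expect this forward direction to be the real obstacle. The care lies in the thinning of $X$: when $\kappa$ is regular one can pigeonhole $\kappa$-many members onto a single finite support with a bounded value-pattern, but when $\kappa$ is singular one cannot, and must instead thin $X$ to merely infinitely many members sharing a common finite support on which their values are bounded, while simultaneously keeping their first coordinates bounded in $\cf C$. Throughout one is tracking the three distinct sources of unboundedness in $P$ — a cofinal set of first coordinates, infinite total support in the weak product, or a single weak-product coordinate with cofinal values — and the substance of the argument is that only the middle one can be imposed on \emph{every} infinite subset of a $\kappa$-sized family, which is precisely what pins $\lambda$ (and the accounting of the $\theta_\alpha$) down to the claimed form.
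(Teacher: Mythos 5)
Your first paragraph (the spectrum characterization) and your backward direction for the second assertion are correct and match the paper, which in fact offers no argument beyond citing Lemma~\ref{lem.lambdafan}, Theorem~\ref{Tukey types of ults on pseudo-tree algebras}, and Fact~\ref{facts.useful}(2); your projection-onto-the-two-valued-coordinates argument is a clean way to make the backward implication explicit.

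The forward direction, however, is a genuine gap, and the step you defer (``a $\Delta$-system argument together with pigeonholing should produce an infinite bounded subfamily'') cannot be carried out. Infinite total support forces unboundedness in the coordinates with $\kappa_\alpha$ infinite exactly as it does in the two-valued coordinates: the characteristic functions of singletons form a $\kappa$-sized subset of $\prod^{\mathrm{w}}_{\alpha<\kappa}\omega$ every infinite subset of which is unbounded (a bound would need infinite support), so $\topkappa\le_T\prod^{\mathrm{w}}_{\alpha<\kappa}\omega$, and since the latter is directed of cardinality $\kappa$, Fact~\ref{facts.useful}(1) gives $\prod^{\mathrm{w}}_{\alpha<\kappa}\omega\equiv_T\topkappa$. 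Thus $\abs{I_2}<\kappa$ (even $I_2=\emptyset$) is perfectly compatible with the top type as soon as $\abs{I_\infty}=\kappa$, and no thinning of $X$ will yield your contradiction. Concretely, the pseudo-tree of Proposition~\ref{finite product types attained} with $\lambda=\kappa$ and every $\kappa_\delta=\omega$ has size $\kappa$, its root chain carries the $\kappa$-fan with all $\theta_\alpha=\omega$, no initial chain has a $\kappa$-fan with $\kappa$-many $\theta_\alpha=1$ (the only other fans are $0$- or $1$-fans), and yet the root ultrafilter is Tukey maximal. So your ``three sources of unboundedness'' analysis can at best pin down $\lambda=\kappa$ (and even that requires the singular-$\kappa$ bookkeeping you postpone), not ``$\kappa$-many $\theta_\alpha$ are $1$''; the ``only if'' in the displayed statement is in direct tension with the paper's own Fact~\ref{f: top type equiv to a prod} and Proposition~\ref{finite product types attained}, so you should not expect to close this direction without weakening the fan condition. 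A smaller point: your remark that the countable case ``goes through as in Proposition~\ref{treealg of size om has top type ult}'' only gives existence of a top-type ultrafilter, not the stated equivalence; a pseudo-tree that is a single chain of order type $\omega$ has a top-type ultrafilter but no $\omega$-fan above any initial chain.
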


\begin{rem}
We point out that $\cf C \times \prod^{\mathrm{w}}_{\alpha < \lambda}\kappa_\alpha$ is really the same as $\prod^{\mathrm{w}}_{\alpha < \lambda+1}\kappa_\alpha$,
where we define $\kappa_{\lambda}$ to be $\cf C$.
We leave the $\cf C$ on the left to remind the reader of how the structure of the pseudo-tree influences the Tukey types of its ultrafilters. 
\end{rem}

\section{Questions}\label{sec.Questions}

We conclude with some open  questions either motivating or arising from this paper.

\begin{question}\label{q.WhichHaveTop}
Characterize those Boolean algebras that have an ultrafilter of the maximum  Tukey type, and characterize those Boolean algebras that have an  ultrafilter of  Tukey type strictly 
below the maximum.
\end{question}

\begin{question}\label{q.charfree}
If $\bB$ is an infinite  Boolean algebra  such  that all ultrafilters  on $\bB$ have maximum Tukey type, is $\bB$ necessarily a free algebra? 
\end{question}

Or is the following possible?

\begin{question}\label{q.TscFreealg}
Does the completion of a free Boolean algebra  have only  ultrafilters which are of maximum Tukey type?
\end{question}

If not, can we at least rule out the remaining possible case of a  cofinally scalene ultrafilter?

\begin{question}\label{q.CohenAlg}
Can the completion of a free Boolean algebra have an ultrafilter which is cofinally scalene; that is Tukey equivalent to $\prod_{\al<\lambda}\kappa_{\al}$, for some infinite $\lambda$ and each $\kappa_{\al}\ge \om$?
In particular, can the Cohen algebra have an ultrafilter Tukey equivalent to $(\om^{\om},\le)$?
\end{question}

Note that a positive answer to Question \ref{q.CohenAlg} would imply a negative answer to Question \ref{q.TscFreealg}, and a positive answer to Question \ref{q.TscFreealg} would imply a negative answer to Question \ref{q.charfree}.

\begin{question}\label{q.Superatomic}
What are the Tukey spectra of superatomic Boolean algebras?
\end{question}

\section{Bibliography}

\bibliographystyle{spmpsci}
\bibliography{references}

\end{document}